\documentclass{amsart}

\title{Based maps to Lagrangian Grassmannians, Quivers, and Bott Periodicity}
\date{Last compiled:  \today.  Last edited: July 9, 2026.}

\author{Jim Bryan}
\address{
Dept. of Math\\
University of British Columbia \\
Vancouver, British Columbia, Canada
}
\email{jbryan@math.ubc.ca}

\author{Ravi Vakil}
\address{
Dept. of Math\\
Stanford University\\
Palo Alto, California, USA
}
\email{rvakil@math.stanford.edu}

\usepackage[all]{xy}
\usepackage{diagbox}
\usepackage{float}
\usepackage{graphicx}
\usepackage{booktabs}
\usepackage{amsmath}
\usepackage{bm}
\usepackage{verbatim}
\usepackage{amsmath,amsthm,amsfonts,bbm}
\usepackage{amssymb}
\usepackage{times}
\usepackage{longtable}
\usepackage{caption}
\usepackage{array}
\usepackage{tikz-cd}
\usepackage{mathtools}
\usepackage{xcolor}
\usepackage{tikz}
\usetikzlibrary{arrows,positioning}

\newtheorem{theorem}{Theorem}[section]
\newtheorem{proposition}[theorem]{Proposition}

\newtheorem{lemma}[theorem]{Lemma}
\newtheorem{corollary}[theorem]{Corollary}

\theoremstyle{definition}

\newtheorem{remark}[theorem]{Remark}
\newtheorem{def-theorem}[theorem]{Definition-Theorem}
\newtheorem{definition}[theorem]{Definition}

\newtheorem{expectation}[theorem]{Expectation}

\usepackage{amsmath}
\usepackage{amsmath,amsthm,amsfonts}
\usepackage{times}

\newcommand{\CC} {{\mathbb C}}          
\newcommand{\ZZ} {{\mathbb Z}}		

\newcommand{\DD} {{\mathbb D}}
\newcommand{\JJ} {{\mathbb J}}
\newcommand{\kk} {{\mathbbm k}}
\renewcommand{\AA} {{\mathbb A}}

\newcommand{\HoG}{\textsf{HoG}}
\newcommand{\HoTop}{\textsf{HoTop}}

\newcommand{\PP}{\mathbb{P}}
\newcommand{\OO}{\mathcal{O}}

\newcommand{\Hom}{\operatorname{Hom}}
\newcommand{\Ker}{\operatorname{Ker}}

\newcommand{\Ext}{\operatorname{Ext}}

\newcommand{\Coker}{\operatorname{Coker}}
\newcommand{\coker}{\operatorname{coker}}
\newcommand{\im}{\operatorname{Im}}

\newcommand{\Sym}{\operatorname{Sym}}

\newcommand{\Isom}{\operatorname{Isom}}

\newcommand{\UL}[1]{\underline{#1}}
\newcommand{\alg}{\mathsf{alg}}
\newcommand{\open}{\mathsf{o}}
\newcommand{\stable}{\mathsf{s}}

\renewcommand{\top}{\mathsf{top}}
\newcommand{\Gr}{\operatorname{Gr}}
\newcommand{\LGr}{\operatorname{LGr}}
\newcommand{\OGr}{\operatorname{OGr}}
\newcommand{\LoopTwo}{\Omega^{2}_{d,\alg}}
\newcommand{\LoopTwoTop}{\Omega^{2}_{d,\top}}

\newcommand{\NoDLoopTwoTop}{\Omega^{2}_{\top}}
\newcommand{\Id}{\operatorname{Id}}

\newcommand{\homotopyeq}{\overset \sim \longleftrightarrow}
\newcommand{\Xpm}{X_{\pm,n}}
\newcommand{\Rpistar}{R\pi_{*}}

\begin{document}

\begin{abstract}
We give a quiver description of the space of based algebraic maps from
$\PP^{1}$ to the Lagrangian Grassmannian (and its orthogonal
counterpart). We show our descriptions lead to an algebro-geometric
refinement of some of the homotopy equivalences in real Bott
periodicity. In particular, we get an isomorphism in Larson and
Vakil's ``naive algebro-geometric homotopy category''  whose topological
realization (after specializing to $\CC$) recovers the classical homotopy equivalences
$\Omega^{2}(Sp/U)\homotopyeq BO\times \ZZ$ and
$\Omega^{2}(O/U)\homotopyeq BSp\times \ZZ$.

\end{abstract}

\maketitle 

\markboth{Genus Zero Maps to Lagrangian Grassmannians}  {Bryan-Vakil}


\section{Introduction}\label{sec: intro}

\subsection{Algebraic and topological double loop spaces and Bott periodicity.}

In this paper, we study the \emph{algebraic double loop space} of
Lagrangian Grassmannians and the orthogonal counterpart. We work over
a field $\kk $ with $\operatorname{char}(\kk)\neq 2$.   (With straighforward changes, we expect our arguments will work over $\operatorname{Spec} \ZZ [1/2]$, appropriately stated.)  We define the
algebraic double loop space as follows.

\begin{definition}\label{defn: Omega2alg(X)}
Let $(X,*)$ be a projective variety with a base (rational) point $*\in X$. Fix homogeneous
coordinates $(x_{0}:x_{1})$ on $\PP^{1}$ with point at infinity
$p_{\infty}=(1:0)$. Let $\LoopTwo(X)$ be the space of algebraic based
maps of degree $d \in \operatorname{CH}_1(X)$, that is
\[
\LoopTwo (X) = \left\{f:\PP^{1}\to X\text{ such that $f(p_{\infty})=*$
and $f_{*}([\PP^{1}])=d$ } \right\} .
\]
\end{definition}
This moduli space is naturally an algebraic variety.

In the analytic topology when $\kk =\CC$, $\CC \PP^{1}$ is homeomorphic to
$S^{2}$ and so there is a continuous inclusion of the algebraic double loop
space into the usual double loop space studied in topology:
\[
i:   \LoopTwo (X) \hookrightarrow \LoopTwoTop (X)
\]
where $\LoopTwoTop (X)$ is the space of continuous based maps of
degree $d\in H_{2}(X,\ZZ )$ (we will drop the $d$ from the notation
when $H_{2}(X,\ZZ )=0$). 

In the same spirit as using high degree polynomial functions to
approximate continuous functions, we may hope that for certain
varieties $X$ over $\CC$, the algebraic loop space approximates the
topological loop space in the sense that it captures most of its
homotopy theory.

\begin{expectation}
For certain varieties $X$ over $\CC$, 
\[
i:\LoopTwo (X) \hookrightarrow \LoopTwoTop
(X)
\]
is a \emph{homotopy approximation} meaning that 
\[
i_{*}:\pi_{k}(\LoopTwo (X)) \to \pi_{k}( \LoopTwoTop (X))
\]
is an isomorphism for all $k<B(d,X)$ and that $B\to \infty $ as $d\to
\infty$. 
\end{expectation}

In the 1980s and 1990s the above expectation was shown to hold for
generalized flag varieties in a series of papers by Segal \cite{Segal-1979}, Kirwan \cite{Kirwan-1986},
Mann-Milgram \cite{Mann-Milgram-93}, and Boyer-Mann-Hurtubise-Milgram \cite{Boyer-Mann-Hurtubise-Milgram}.

\begin{theorem}\label{thm: Mann-Milgram algebraic loop approximation
holds for generalized flag varieties}
Let $G$ be a reductive algebraic group, let $P\subset G$
be a parabolic subgroup, and let $X=G/P$ be the corresponding
generalized flag variety. Then the above expectation holds.
\end{theorem}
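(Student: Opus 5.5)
Theorem~\ref{thm: Mann-Milgram algebraic loop approximation holds for generalized flag varieties} is the Segal--Kirwan--Mann--Milgram--Boyer--Hurtubise--Milgram theorem, and I would prove it along Segal's original route, generalized to $G/P$. The plan is a stabilization argument. First, construct \emph{gluing} (or ``scanning'') maps
\[
\LoopTwo(X)\times \Omega^{2}_{d',\alg}(X) \dashrightarrow \Omega^{2}_{d+d',\alg}(X),
\]
defined up to homotopy by degenerating $\PP^1$ to a nodal curve, or by concatenating configurations of ``particles''. These give stabilization maps $s_{e}:\LoopTwo(X)\to \Omega^2_{d+e,\alg}(X)$ compatible with $i$ up to homotopy, so that
\[
\operatorname{colim}_d \LoopTwo(X)\to \NoDLoopTwoTop(X)
\]
makes sense. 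The theorem then splits into two statements: (a) the stabilization maps are homology (hence, after checking simple connectivity or nilpotence, homotopy) isomorphisms in a range $k<B(d)$ with $B\to\infty$; (b) the induced map on the colimit is a weak equivalence (the ``group completion'' step).

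For (a) I would follow Kirwan's approach. Write $\LoopTwo(X)$ as a quotient: a based map $\PP^1\to G/P$ corresponds to a $P$-reduction of the trivial $G$-bundle, framed at $p_\infty$, or to tuples of sections of line bundles $\OO(d_i)$ satisfying the Plücker relations. The space of all holomorphic data, including the degenerate tuples, is an affine space $V_d$. The honest maps form an open subset whose complement has a stratification indexed by the ``base-point'' defect, that is, the points where the data fail to define a map together with their multiplicities. The codimensions of the strata grow linearly in $\min_i d_i$. A Gysin/Thom isomorphism argument on this stratification, comparing degrees $d$ and $d+e$, then gives the homology stability range. For (b) I would use Segal's scanning map from configuration-type spaces to sections of a bundle over $S^2$. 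Via the McDuff--Segal group-completion theorem, the stabilized space models $\Omega^2 X$, and one identifies the local ``particle'' spaces with $X$ near the base point using the Bruhat cell $\AA^N\subset G/P$ (the big cell $U^-P/P$).

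I expect the main obstacle to be the codimension estimate for the bad strata for a general parabolic $P$, because the Plücker relations make $V_d$ a cone rather than an affine space. I would first reduce to $P=B$ a Borel, since $G/B\to G/P$ is a fibration with fiber $P/B$ and the five lemma transfers the result. I would then use the Bruhat decomposition to describe the degenerate loci via root data, following Boyer--Mann--Hurtubise--Milgram, who handle this via the Atiyah--Bott/Donaldson description of based maps as framed holomorphic bundles. If the direct estimate stalls, the fallback is their gauge-theoretic comparison: realize $\LoopTwo(G/B)$ as a union of strata of the space of holomorphic structures and invoke the Atiyah--Bott equivariant perfection to obtain stability.
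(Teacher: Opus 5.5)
The paper does not prove this theorem. It is quoted from Segal, Kirwan, Mann--Milgram and Boyer--Mann--Hurtubise--Milgram, and used only as a black box to pass to the limit $d\to\infty$. Your outline therefore has to stand on its own. Its overall architecture (stabilization in $d$, then scanning plus group completion for the colimit) is the standard one. However, the step you propose for general parabolics fails, because there is no algebraic analogue of the fibration $\Omega^{2}(P/B)\to\Omega^{2}(G/B)\to\Omega^{2}(G/P)$ to which a five-lemma argument could apply.

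The fiber of $\coprod_{d'\mapsto d}\Omega^{2}_{d',\alg}(G/B)\to\LoopTwo(G/P)$ over $f$ is the space of based algebraic sections of the flag bundle $f^{*}(G/B)=\mathcal{P}_{f}\times^{P}(P/B)$, where $\mathcal{P}_{f}=f^{*}(G\to G/P)$. This bundle is twisted by the splitting type of the Levi reduction of $\mathcal{P}_{f}$, and that type jumps with $f$. Already for $G=SL_{3}$ and $G/P=\PP^{2}$, a lift of $f$ is a based line subbundle of $Q_{f}=\OO^{3}/f^{*}\OO(-1)\cong\OO(a)\oplus\OO(d-a)$. Lifts of degree $0$ in the $P/B$ direction exist only when $f$ lands in a line, equivalently only when $Q_{f}$ has a trivial summand, so the fibers are not even uniformly nonempty. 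Moreover, the fiber involves multidegrees with small $P/B$-components, about which the $G/B$ statement says nothing. So the $G/B$ case does not give the $G/P$ case by this route. You would need an independent stability theorem for sections of twisted flag bundles, which is as hard as the original problem.

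Step (a), where all the content lies, is also not carried out. The space of all data is an affine space only in special cases: polynomial tuples for $\PP^{n}$, or Quot-scheme/quiver models for Grassmannians such as the paper's $V_{n,N,d}$. For general $G/P$ the space of Pl\"ucker tuples is a singular cone (a Drinfeld/Zastava quasimap space). Passing to $P=B$ does not help, since already for $SL_{3}/B$ the incidence relation $v\wedge\omega=0$ is quadratic. Along strata of such a cone there is no Thom/Gysin isomorphism, and deleting a codimension-$c$ subset need not give a $(2c-2)$-connected inclusion. The transverse structure of each defect stratum therefore has to be analyzed, and that is exactly the missing input.

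Two further points in step (a). First, codimensions of defect strata are governed by the defect, not by $\min_{i}d_{i}$: for $\PP^{n}$ the defect-one stratum has codimension $n$ in every degree. The range $B(d)$ has to come from comparing the stratifications in degrees $d$ and $d+e$, which you do not set up. Second, the paper's own smooth models do not supply this either. The unstable loci in $V_{n,N,d}$ and $V_{\mp,n,d}$ have codimension $\min(n,N)$, $n$ or $n-1$, independent of $d$. They stabilize in the target rather than in the degree, which is precisely why the paper imports the theorem.

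Finally, the Atiyah--Bott fallback is not an argument as stated: based maps to $G/B$ are not evidently a union of Harder--Narasimhan strata. The passage from homology to homotopy also needs care, since these spaces are not simply connected ($\pi_{1}\LoopTwoTop(G/B)\cong\pi_{3}(G)$).
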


The (topological) double loop space $\LoopTwoTop (X)$ plays an
important role in homotopy theory, for example in Bott periodicity. To
state the version of Bott periodicity we want, we recall that $U(n)$,
$Sp(2n)$, and $O(2n)$ are the classical compact Lie groups of unitary,
symplectic, and orthogonal transformations. We also have the
infinite-dimensional versions of these groups (defined by direct limits of
inclusions) and their associated classifying spaces and homogeneous
spaces. For example:
\begin{align*}
U&=\lim_{n\to  \infty} U(n),\\
Sp/U& = \lim_{n\to  \infty} Sp(2n)/U(n),\\
BO& = \lim_{n\to  \infty} BO(n),\\
\end{align*}
and so forth.

One version of Bott periodicity is then given as follows.

\begin{theorem}[\cite{Bott-Periodicity-1959}]\label{thm: classical Bott periodicity as homotopy equivalences}
There exist the following homotopy equivalences:
\begin{align}
\LoopTwoTop (BU)&\homotopyeq BU \label{eqn: Omega2BU=BU}\\
\LoopTwoTop (Sp/U)&\homotopyeq BO  \label{eqn: Omega2Sp/U=BO}\\
\NoDLoopTwoTop (BO)&\homotopyeq O/U  \label{eqn: Omega2BO=O/U}\\
\LoopTwoTop (O/U)&\homotopyeq BSp  \label{eqn: Omega2O/U=BSp}\\
\NoDLoopTwoTop (BSp)&\homotopyeq Sp/U   \label{eqn: Omega2BSp=Sp/U}
\end{align}
\end{theorem}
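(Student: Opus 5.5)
Rather than argue in the style of this paper, I would follow Bott's Morse-theoretic approach, or equivalently the Clifford-module/Atiyah–Bott approach.

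The plan rests on one chain of equivalences. Write $\Omega$ for the based loop functor. We have $\Omega^{2}(X)=\Omega(\Omega X)$, and for a topological group $G$ there are identifications $\Omega BG\simeq G$ and $\Omega(G/H)$ related by fibrations. So each double-loop equivalence in the Theorem splits into two single-loop equivalences from Bott's sequence of eight:
\[
\Omega(\ZZ\times BO)\simeq O,\quad \Omega O\simeq O/U,\quad \Omega(O/U)\simeq U/Sp,\quad \Omega(U/Sp)\simeq \ZZ\times BSp,
\]
\[
\Omega(\ZZ\times BSp)\simeq Sp,\quad \Omega Sp\simeq Sp/U,\quad \Omega(Sp/U)\simeq U/O,\quad \Omega(U/O)\simeq \ZZ\times BO,
\]
together with the complex pair $\Omega U\simeq \ZZ\times BU$ and $\Omega(\ZZ\times BU)\simeq U$. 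Composing two consecutive entries yields the five displayed equivalences. For instance, \eqref{eqn: Omega2Sp/U=BO} follows from $\Omega(Sp/U)\simeq U/O$ and $\Omega(U/O)\simeq \ZZ\times BO$, and restricting to a component indexed by $d\in\pi_{2}(Sp/U)=\ZZ$ gives $BO$.

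\textbf{Proving each single-loop equivalence.} I would use Milnor's Morse-theoretic method. Realize each space in the chain as a totally geodesic submanifold at finite level $n$, namely as the space of minimal geodesics between two points of the previous symmetric space. Concretely, $Sp(n)/U(n)$ is identified with the set of complex structures $J\in Sp(n)$ with $J^{2}=-1$, and the minimal geodesics from $1$ to $-1$ in $Sp(n)$ are $t\mapsto \exp(\pi t J)$. The pattern continues through the tower of Clifford-module structures, the eight-fold pattern coming from the periodicity of Clifford algebras.

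The key step is the index estimate. The energy functional on the space of piecewise-smooth paths is Morse–Bott. Its minimum set is the space $M_{\min}$ of minimal geodesics, and every non-minimal geodesic has index growing linearly in $n$; this is computed from the root data of the symmetric space via Bott's formula for conjugate points. Consequently $M_{\min}\hookrightarrow \Omega$ is $c(n)$-connected with $c(n)\to\infty$, and passing to the direct limit gives a weak equivalence. Whitehead's theorem then upgrades this to a homotopy equivalence, since all spaces involved have the homotopy type of CW complexes by Milnor.

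For the $\ZZ\times$ factors, I would take the loop space of $BG$ to be modeled on the union over all $k$ of the Grassmannian-type minimal sets. Equivalently, I would first reduce $\Omega B G\simeq G$ via the path fibration and treat the component indexed by $k$ as the stable limit.

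\textbf{Main obstacle.} The hardest step is the uniform index computation for non-minimal geodesics in each of the eight symmetric spaces. In particular, one must check that the lowest nonzero index grows with $n$ in the unstable inclusions, for example $Sp(n)/U(n)\subset Sp(n)$. I would attempt this first through a single uniform argument: linear algebra on $\mathrm{Cl}_{k}$-modules, following Milnor's \emph{Morse Theory}, Part IV, where each case reduces to counting the eigenvalues of $\operatorname{ad}$ on the relevant tangent space. Failing that, I would simply cite \cite{Bott-Periodicity-1959}.
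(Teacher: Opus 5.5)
Your route is genuinely different from the paper's. In fact it is the proof behind the paper's citation: the paper does not reprove this theorem. It quotes it from Bott, then gives a new argument for only three of the five equivalences, \eqref{eqn: Omega2BU=BU}, \eqref{eqn: Omega2Sp/U=BO} and \eqref{eqn: Omega2O/U=BSp}, and leaves \eqref{eqn: Omega2BO=O/U} and \eqref{eqn: Omega2BSp=Sp/U} to future work. That argument never splits $\Omega^{2}$ into two single loopings and never meets $U/O$ or $U/Sp$. It proceeds as follows:
\begin{enumerate}
\item It identifies the space of based degree-$d$ algebraic maps $\PP^{1}\to\Gr(n,n+N)$, $\LGr(n,2n)$, $\OGr(n,2n)$ with a stable quiver quotient $V^{\stable}/G$, for $G=GL_{d}$, $O(d)$, $Sp(d)$ respectively. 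The switch of type comes from the duality sign $\phi^{\vee}=\mp\phi$ on $A=H^{0}(E(-1))$.
\item It shows the unstable locus has codimension growing with $n$, and that $[V/G]\to BG$ is an affine bundle.
\item It deduces isomorphisms with $BG$ in $\HoG$.
\item It passes from algebraic to continuous maps using topological realization and the Boyer--Mann--Hurtubise--Milgram theorem.
\end{enumerate}
Your approach buys completeness (all five equivalences) and independence from algebraic geometry. The paper's approach buys explicit finite-dimensional smooth affine models of the double loop spaces and an algebro-geometric refinement in $\HoG$. It also treats the double loop space in one step, with the degree $d$ simply equal to $\dim A$, though its topological conclusion still rests on the analytic approximation theorem. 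Your decomposition into eight single loopings, and the compositions, are fine.

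One step of your sketch is wrong as stated and needs the standard repair. The claim that \emph{every} non-minimal geodesic has index growing with $n$ fails for $\Omega U$, $\Omega(U/Sp)$ and $\Omega(U/O)$, which are precisely the stages producing the $\ZZ\times$ factors. At finite level these spaces have $\pi_{1}=\ZZ$, so the path space has infinitely many components, while the minimal set is a finite union of Grassmannians. A component containing no minimal geodesic still contains a shortest geodesic, which minimizes energy there and so has index $0$. Concretely, in $U(2m)$ take the geodesic $t\mapsto\exp(\pi tA)$ from $I$ to $-I$, with $A$ having eigenvalue $3i$ once and $i$ otherwise. It is non-minimal, yet $\operatorname{ad}_{A}$ has eigenvalues only in $\{0,\pm 2i\}$, so it has no interior conjugate points. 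Hence Bott's connectivity criterion cannot be applied to the whole path space. You must restrict to one component: in the complex case replace $U(2m)$ by $SU(2m)$, as Milnor does, and for $U/Sp$ and $U/O$ take the component containing the middle Grassmannian. There the fixed-trace constraint forces eigenvalues of both signs, which gives the linear index bound. The factor $\ZZ$ then comes from the fact that all components of $\Omega X$ are homotopy equivalent. Relatedly, your paragraph on the $\ZZ\times$ factors places the Grassmannians in \emph{the loop space of $BG$}. But $\Omega BG\simeq G$ needs no Morse theory at all; the Grassmannian minimal sets, and the component issue above, live in $\Omega U$, $\Omega(U/Sp)$ and $\Omega(U/O)$.
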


Equation~\eqref{eqn: Omega2BU=BU} is referred to as \emph{complex Bott
periodicity} and it implies that the homotopy groups of $U$ are
2-periodic. Equations~\eqref{eqn: Omega2Sp/U=BO}--\eqref{eqn: Omega2BSp=Sp/U} 
are referred to as \emph{real Bott periodicity} and they imply for example that
the homotopy groups of $O$ are 8-periodic.

\subsection{Summary of main results}

The main results of this paper give explicit descriptions of $\LoopTwo
(X)$ when $X$ is a Grassmannian, a Lagrangian Grassmannian, or a maximal
isotropic orthogonal Grassmannian. Our description is in terms of
moduli spaces of quiver representations. We use our descriptions along
with Theorem~\ref{thm: Mann-Milgram algebraic loop approximation
holds for generalized flag varieties} to give new proofs of the
homotopy equivalences \eqref{eqn: Omega2BU=BU}, \eqref{eqn:
Omega2Sp/U=BO}, and \eqref{eqn: Omega2O/U=BSp} and so we may regard
our theorems as finite-dimensional, algebro-geometric refinements of Bott
periodicity. The study of the remaining equivalences in the Bott
periodicity theorem from the perspective of this paper will be pursued
in future work.

To state our results more explicitly, let $ \Gr(n,n+N) $ be the
Grassmannian of $n$-dimensional quotients of a fixed $(n+N)$-dimensional
vector space, let $ \LGr (n,2n) $ be the Lagrangian Grassmannian which
parameterizes $n$-dimensional Lagrangian quotients of a fixed
$2n$-dimensional symplectic vector space, and let $ \OGr (n,2n) $ be
the isotropic orthogonal Grassmannian which parameterizes
$n$-dimensional isotropic quotients of a fixed split $2n$-dimensional
orthogonal vector space. We note that over the complex numbers with
the analytic topology, these varieties are also given by homogeneous
spaces:
\begin{align*}
\Gr (n,n+N)&=U(n+N)/U(n)\times U(N),\\
\LGr (n,2n)&= Sp(2n)/U(n),\\
\OGr (n,2n)&= O(2n)/U(n) .
\end{align*}

Let $X$ be any of the above homogeneous spaces. Then $X$
is a projective variety (which is defined over any field) of
Picard number 1 and so the class of a based map $f:\PP^{1}\to X$ may
be regarded as a non-negative integer (the degree).  (For $\OGr
(n,2n)$ we take the connected component containing the chosen base
point; equivalently, the based mapping space only sees this component.)

\begin{theorem}\label{thm: quiver description of Loop2alg(Gr(n,n+N))}
Let $A$ be a vector space of dimension $d$ and let $U,W$ be vector
spaces of dimension $n$ and $N$, respectively. Let
\[
V=V_{n,N,d} =\Hom (A,A)\oplus \Hom (W,A)\oplus \Hom (A,U).
\]
Then $GL(A)$ acts naturally on $V$ and let $V^{\stable}\subset V$ be
the locus of points which are stable in the sense of Geometric
Invariant Theory (with respect to the trivial linearization). Then
there exists a natural isomorphism:
\[
\LoopTwo (\Gr (n,n+N))\cong V^{\stable}/GL(A).
\]
In particular, $\LoopTwo (\Gr (n,n+N))$ is a smooth affine algebraic
variety of dimension 
\[
(n+N)d.
\]
\end{theorem}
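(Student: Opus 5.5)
We identify based maps $f:\PP^{1}\to\Gr(n,n+N)$ of degree $d$ with quotients $\OO^{n+N}=(U\oplus W)\otimes\OO\twoheadrightarrow Q$ on $\PP^{1}$. Here $Q$ is locally free of rank $n$ and degree $d$, and the restriction to $p_\infty$ is the fixed base quotient. We take the base point to be the quotient $U\oplus W\to U$, so the map $W\otimes\OO\to Q$ vanishes at $p_\infty$. Writing $K$ for the kernel, the composite $U\otimes\OO\to Q$ is injective, since it is an isomorphism at $p_\infty$. Its cokernel $T$ is therefore a torsion sheaf of length $d$ supported on $\AA^{1}=\PP^{1}\setminus p_\infty$.

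A torsion sheaf on $\AA^{1}$ of length $d$ is the same as a $d$-dimensional vector space $A=H^{0}(T)$ with an endomorphism $\alpha$ (multiplication by $x=x_1/x_0$). The remaining data are:
\begin{itemize}
\item the map $W\to A$ induced by $W\otimes\OO\to Q\to T$, which gives $\beta\in\Hom(W,A)$;
\item the extension class of $0\to U\otimes\OO\to Q\to T\to0$ in $\Ext^{1}(T,U\otimes\OO)$.
\end{itemize}
We compute the extension group by Serre duality or by the resolution $0\to A\otimes\OO(-1)\xrightarrow{x-\alpha}A\otimes\OO\to T\to0$ (suitably homogenized). This gives $\Ext^{1}(T,\OO)\cong A^{\vee}$, so the extension is $\gamma\in\Hom(A,U)$. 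More concretely, $Q$ is the cokernel of the monad-type map
\[
A\otimes\OO(-1)\xrightarrow{(x_1-\alpha x_0,\ \gamma x_0)} A\otimes\OO\oplus U\otimes\OO ,
\]
and the map from $W\otimes\OO$ is $\beta$ lifted to $A\otimes\OO$. So I would define, for $(\alpha,\beta,\gamma)\in V$, the sheaf $Q$ as this cokernel. The map $(U\oplus W)\otimes\OO\to Q$ is given by the inclusion on $U$ and by $\beta$ (times $x_0$, so that it vanishes at $\infty$) on $W$. This construction is $GL(A)$-equivariant and works in families, giving a morphism $V\to\{\text{sheaf data}\}$.

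The key step is to show the following. First, $Q$ is always locally free of rank $n$ and degree $d$, since the map $A\otimes\OO(-1)\to A\otimes\OO\oplus U\otimes\OO$ is injective on fibers because $x_1-\alpha x_0$ is invertible at $\infty$. Second, the map from $\OO^{n+N}$ is surjective exactly when no proper $\alpha$-invariant subspace $A'\subsetneq A$ contains $\operatorname{im}\beta$. Surjectivity fails at $x=\lambda$ exactly when the image of $W$ together with $U$ misses the fiber of $T$. By Nakayama this is the condition that $\CC[\alpha]\cdot\operatorname{im}\beta+\ldots$ spans, and one checks that the $U$ part contributes nothing to $T$. Indeed $T=\operatorname{coker}(U\otimes\OO\to Q)$ is the cokernel of $x-\alpha$ on $A$, and $W$ surjects onto it iff $\beta$ generates $A$ as a $\kk[\alpha]$-module. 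I would then verify, via King's criterion for the trivial character, that this cyclicity condition is exactly GIT stability. A vector is stable iff its orbit is closed with finite stabilizer. For this quiver (a loop at $A$, framing in from $W$, out to $U$) I expect stability to be equivalent to: no proper $\alpha$-stable subspace containing $\operatorname{im}\beta$, together with the dual condition that no nonzero $\alpha$-stable subspace lies in $\ker\gamma$. I need to check the dual condition is also forced geometrically. If $A''\subset\ker\gamma$ is $\alpha$-stable, then $Q$ acquires torsion or a splitting. Indeed the subsheaf $A''$-part of $T$ splits off in the extension, giving torsion in $Q$, which contradicts local freeness. So local freeness of $Q$ is exactly the costability condition, and I must redo the first point more carefully: the cokernel is torsion-free iff the map is injective on fibers at all points, i.e. iff $\ker(\lambda-\alpha)\cap\ker\gamma=0$ for all $\lambda$.

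Conversely, given a based map we recover $(A,\alpha,\beta,\gamma)$ up to the choice of basis of $A$, as above, and the two constructions are mutually inverse. Automorphisms of $T$ fixing $\beta$ are trivial by cyclicity, so $GL(A)$ acts freely on $V^{\stable}$. Thus, working with families, we get an isomorphism of functors. Since $V^{\stable}$ is open in affine space with free action and closed orbits, Luna's étale slice theorem shows $V^{\stable}/GL(A)$ is a smooth geometric quotient. Moreover all orbits being closed and stable makes $V^{\stable}/GL(A)$ open in the affine GIT quotient $V/\!\!/GL(A)$; affineness needs the stronger fact that $V^{\stable}$ is the complement of a $GL(A)$-invariant hypersurface. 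Its dimension is $d^{2}+Nd+nd-d^{2}=(n+N)d$. The main obstacle is this last affineness and the clean matching of stability with the two (co)cyclicity conditions. For affineness I would first try a resultant-type description, using the invariant hypersurface where $\det[\beta,\alpha\beta,\ldots]$-type minors vanish. Failing that, I would identify the quotient with the classical affine space of based maps presented by matrices of polynomials, i.e. the Hermite–Kronecker normal form.
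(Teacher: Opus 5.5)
\paragraph{The gap: affineness.} You single out affineness as the main obstacle. No argument can supply it, because for $d\ge 1$ it is false except when $n=N=1$.

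Take $n=1$, $N=2$, $d=1$, i.e.\ based degree-one maps to $\PP^{2}$. Then:
\begin{itemize}
\item $\alpha=a$ is a scalar, and stability means $\beta\neq0$ and $\gamma\neq0$;
\item $t\in GL_{1}$ acts by $(a,\beta,\gamma)\mapsto(a,t\beta,t^{-1}\gamma)$;
\item $(a,\gamma\beta)$ identifies $V^{\stable}/GL(A)$ with $\AA^{1}\times(\AA^{2}\setminus\{0\})$. These are the maps $z\mapsto[z-a:b_{1}:b_{2}]$ with $(b_{1},b_{2})\neq(0,0)$.
\end{itemize}
This space is not affine, since every regular function on it extends to $\AA^{3}$.

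In general, the closed non-stable orbits are a stable summand of dimension $d'<d$ plus $d-d'$ eigenvalues with zero framing maps. They form a nonempty closed subset of codimension $n+N-1$ in the normal affine variety $V/\!\!/GL(A)$. So by Hartogs, $V^{\stable}/GL(A)$ is affine only in the resultant case $n=N=1$ that you had in mind. Outside that case $V^{\stable}$ is not even the complement of an invariant hypersurface. Indeed, the proof of Lemma~\ref{lem: codimension of unstable locus for the Gr quiver} shows that failure of cyclicity has codimension $N$ and failure of cocyclicity has codimension $n$.

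The paper also asserts affineness without proving it. Its argument only yields $V^{\open}=V^{\stable}$, i.e.\ an open subset of $V/\!\!/GL(A)$. What is actually established, by the paper and by your Luna argument, is that $\LoopTwo(\Gr(n,n+N))$ is smooth and quasi-affine of dimension $(n+N)d$. You should drop the word ``affine'' rather than try to prove it.

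\paragraph{Comparison with the paper's route.} Apart from this, your argument is correct and reaches the paper's dictionary by a somewhat different route. Your $Q$, $T$, $(\alpha,\beta,\gamma)$ are the paper's $E$, $Q$, $(\alpha,\gamma,j)$.

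The paper applies the Str\o mme--Beilinson Lemma~\ref{lem: Stromme} to $E$ itself:
\begin{itemize}
\item it takes $A=H^{0}(E(-1))$, and the framing splits $H^{0}(E)=A\oplus U$;
\item $(\alpha,j)$ are the components of $x_{0}\colon H^{0}(E(-1))\to H^{0}(E)$;
\item the $W$-data are global sections of $\UL{W}\to E(-1)$, using $H^{1}(E(-1))=0$.
\end{itemize}
You instead take $A=H^{0}(T)$, which is the same space (twist $0\to U\otimes\OO\to Q\to T\to0$ by $\OO(-1)$). You read $\alpha$ off the $\kk[z]$-module $T$, a viewpoint the paper itself uses in proving Lemma~\ref{lem: phi.alpha=alphavee.phi}. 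You get your $\gamma$ as the extension class in $\Ext^{1}(T,U\otimes\OO)$, and resolving $T$ then reproduces~\eqref{eqn: A(-1)->A+U->E}.

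Your route makes the surjectivity criterion ($\im\beta$ generates $A$ over $\kk[\alpha]$) immediate, with no Jordan-form argument. The cost is checking that $\Ext^{1}(T,\OO)\cong A^{\vee}$ is natural in $T$; the resolution of $T$ gives this. The paper's version produces $(\alpha,j)$ as canonical maps on the cohomology of $E(-1)$. This is what the duality arguments of Section~\ref{sec: proofs of the Omega2(LG/OG) theorems} are built on.

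Two small points:
\begin{itemize}
\item The $W$-component of the quotient map is just the constant lift $W\otimes\OO\to A\otimes\OO\to Q$. This already vanishes at $p_{\infty}$, so there should be no extra factor of $x_{0}$.
\item Your final characterization of trivial-character stability as simplicity (cyclicity \emph{and} cocyclicity, not cyclicity alone) is the right one. You only ``expect'' it, but the paper likewise just cites King and Nakajima for it.
\end{itemize}
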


For the Lagrangian and isotropic orthogonal Grassmannians we have the following.

\begin{theorem}\label{thm: quiver description of Loop2Alg(LGr)}
Let $A$ be a vector space of dimension $d$ equipped with a split
non-degenerate orthogonal form $\phi :A\to A^{\vee}$ (so
$\phi^{\vee}=\phi$) and let $U$ be a vector space of dimension
$n$. Let
\[
V=V_{n,d}=\left\{(\alpha ,j)\in \Hom (A,A)\oplus \Hom (A,U):
\alpha^{\vee}\phi =\phi \alpha \right\} 
\]
and let 
\[
O(A,\phi ) = \left\{g\in GL(A):g^{\vee}\phi g=\phi  \right\}
\]
be the group of orthogonal automorphisms of $A$. Then $O(A,\phi )$ acts
naturally on $V$.  Let $V^{\stable}\subset V$ be the stable
locus (in the sense of GIT). Then
\[
\LoopTwo (\LGr(n,2n))\cong V^{\stable}/O(A,\phi ). 
\]
In particular, $\LoopTwo (\LGr (n,2n))$ is a smooth affine algebraic
variety of dimension 
\[
(n+1)d.
\]
\end{theorem}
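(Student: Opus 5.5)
We will deduce the statement from Theorem~\ref{thm: quiver description of Loop2alg(Gr(n,n+N))} (with $N=n$) by taking fixed points of an involution. First we describe the Grassmannian case concretely. A based map $f:\PP^{1}\to\Gr(n,2n)$ of degree $d$ corresponds to a quotient $\OO^{2n}\to Q$ with $Q$ locally free of rank $n$ and degree $d$, trivialized at $p_\infty$. Equivalently, it is a sheaf quotient $\OO^{n}=U\otimes\OO\to T$ of the standard quotient. Here $T$ is a torsion sheaf of length $d$ supported on $\AA^{1}=\PP^{1}\setminus p_\infty$, so $A=H^{0}(T)$, $\alpha$ is multiplication by the coordinate $x$, and the maps $W\to A$ and $A\to U$ come from the two halves of the data (a monad/Beilinson-type description). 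Stability means that no proper $\alpha$-stable subspace contains the image of $W$, together with the dual condition. The symplectic form on $\kk^{2n}=U\oplus U^{\vee}$, with $W=U^{\vee}$, induces an involution $\sigma$ on $\LoopTwo(\Gr(n,2n))$ sending a quotient $\OO^{2n}\to Q$ to the quotient $\OO^{2n}\to K^{\vee}$ given by the annihilator. Its fixed locus is exactly $\LoopTwo(\LGr(n,2n))$, since the base point is Lagrangian and degrees are preserved because $\det K^{\vee}\cong\det Q$.

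Step 1: compute $\sigma$ on quiver data. Under Theorem~\ref{thm: quiver description of Loop2alg(Gr(n,n+N))}, $\sigma$ should send $(\alpha,i,j)\in\Hom(A,A)\oplus\Hom(U^{\vee},A)\oplus\Hom(A,U)$ to $(\alpha^{\vee},j^{\vee},i^{\vee})$ on $A^{\vee}$, possibly up to a sign. This uses Grothendieck duality $\mathcal{E}xt^{1}(T,\OO)\cong T^{\vee}$ for the torsion sheaf, with $x$ acting by the transpose. Stability is preserved, because the stability conditions are swapped under duality.

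Step 2: identify the fixed points. A point $[\alpha,i,j]$ is $\sigma$-fixed exactly when there is $\phi:A\xrightarrow{\sim}A^{\vee}$ with $\phi\alpha=\alpha^{\vee}\phi$, $\phi i=j^{\vee}$ and $i^{\vee}=j\phi^{-1}$. Stable points have trivial stabilizer, which is a Schur-type lemma: any automorphism commuting with the data is the identity on the span generated from $\mathrm{im}\,i$, and that span is all of $A$. Hence $\phi$ is unique. Applying $\sigma$ twice then forces $\phi^{\vee}=\pm\phi$, and the sign convention fixed in Step 1 should make it $+$, so $\phi$ is symmetric. Then $i=\phi^{-1}j^{\vee}$ is determined by $(\alpha,j)$, and $\alpha$ is $\phi$-self-adjoint. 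Since $\kk$ has characteristic $\neq2$, all nondegenerate symmetric forms on $A$ become equivalent over $\bar\kk$. More to the point, $\phi$ can be pinned down functorially: it is the trace-residue pairing on $H^{0}(T)$ coming from the Lagrangian structure, which is split by the same mechanism that splits $\kk^{2n}$. With $\phi$ fixed, the residual symmetry is exactly $O(A,\phi)$. So the fixed locus is $V^{\stable}/O(A,\phi)$, where stability for $(\alpha,j)$ reduces to the single condition that no proper $\alpha$-invariant subspace lies in $\ker j$.

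Step 3: smoothness, affineness and dimension. The group $O(A,\phi)$ acts freely on $V^{\stable}$ with closed orbits, so the quotient is a geometric quotient by a free reductive action. It is affine since GIT with the trivial linearization gives $\operatorname{Spec}\kk[V]^{O}$ on the stable locus, which is open and equal to the whole quotient where it is nonempty. Alternatively, it is a closed subvariety of an affine variety. The dimension count is
\[
\dim V=\tfrac{d(d+1)}{2}+nd,\qquad \dim O(A,\phi)=\tfrac{d(d-1)}{2},
\]
giving $(n+1)d$. As a consistency check, fixed loci of involutions on smooth varieties are smooth in characteristic $\neq2$.

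I expect the main obstacle to be the sign and form-type bookkeeping in Step 2. One must show that $\phi$ is symmetric rather than skew, and that it is split rather than merely nondegenerate, functorially in families so that we get an isomorphism of schemes and not just a bijection on points. I would handle this by constructing $\phi$ directly from the symplectic pairing as a residue pairing on $T$, and by checking splitness on the explicit family where $\alpha$ is diagonalizable.
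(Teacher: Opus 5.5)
\paragraph{Comparison with the paper.} Your route differs from the paper's. You realize $\LoopTwo(\LGr(n,2n))$ as the fixed locus of $K\mapsto K^{\perp}$ on $\LoopTwo(\Gr(n,2n))$, and you obtain $\phi$ as the unique intertwiner between a stable triple and its image under $\sigma$. The paper instead constructs $\phi$ directly, as the connecting isomorphism $H^{0}(E(-1))\to H^{1}(E^{\vee}(-1))$ followed by Serre duality. It then proves $\phi^{\vee}=\mp\phi$, $\phi\alpha=\alpha^{\vee}\phi$ and $\phi\gamma=-j^{\vee}$ by diagram chases in the derived category. Your uniqueness trick is a clean way to get $\phi^{\vee}=\pm\phi$ and to produce $\phi$ in families.

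\paragraph{The gap: the sign is left open.} The proposal leaves open exactly the step that carries the content of the theorem. Write $\sigma(\alpha,i,j)=(\alpha^{\vee},\epsilon_{1}j^{\vee},\epsilon_{2}i^{\vee})$. Your Step 2 then gives $\phi^{\vee}=\epsilon_{1}\epsilon_{2}\,\phi$.
\begin{itemize}
\item Only the product $\epsilon_{1}\epsilon_{2}$ is well defined on $V^{\stable}_{n,n,d}/GL(A)$, since acting by $-1\in GL(A)$ flips both signs.
\item The product is not a convention. Its two values give genuinely different involutions, one whose fixed points carry symmetric intertwiners and one whose fixed points carry skew ones. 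It equals the sign in $J_{\pm}$ times an intrinsic sign coming from duality.
\item Running your argument verbatim with the orthogonal form $J_{+}$ gives the same conclusion ``$\phi^{\vee}=\pm\phi$''. There, however, $\phi$ must be skew.
\end{itemize}
So ``the sign convention fixed in Step 1 should make it $+$'' is precisely the missing proof. Step 1 itself is only asserted up to this sign. The isomorphism $\mathcal{E}xt^{1}(T,\OO)\cong T^{\vee}$ accounts for $\alpha\mapsto\alpha^{\vee}$. Matching the new framing data with $(j^{\vee},i^{\vee})$ is the real work, which the paper does through its big diagram.

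\paragraph{How the paper gets the sign, and how you could.} In the paper the sign comes from the lemma $\phi^{\vee}=\mp\phi$. The self-dual sequence $E^{\vee}\to\UL{U}\oplus\UL{U}^{\vee}\to E$ is rewritten as a quasi-isomorphism $\Psi:\DD F^{\bullet}\to F^{\bullet}$, where $F^{\bullet}=[\UL{U}(-1)\to E(-1)]$. Passing from a short exact sequence to a map of two-term complexes turns the $\pm$ into $\mp$, so $\DD\Psi=\mp\Psi$. Grothendieck--Verdier duality for $\PP^{1}\to pt$ then yields $\phi^{\vee}=\mp\phi$.

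You need this computation, or an equally careful residue-pairing computation in which your residue pairing is shown to equal the intertwiner. Alternatively, once Step 1 is proved up to the single unknown sign, a global argument decides it.
\begin{itemize}
\item By deformation theory, $\LoopTwo(\LGr(n,2n))$ is nonempty and smooth of dimension $\langle c_{1}(T_{\LGr}),d\rangle=(n+1)d$, because $T_{\LGr}$ is globally generated.
\item A skew $\phi$ would instead make the fixed locus an open subset of a quotient of dimension $nd+\tfrac{d(d-1)}{2}-\tfrac{d(d+1)}{2}=(n-1)d$, and that quotient is empty for odd $d$.
\end{itemize}

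\paragraph{Two smaller points.}
(i) Your claim that $\phi$ is split ``by the same mechanism that splits $\kk^{2n}$'' is false over non-closed fields. Take $\kk=\mathbb{R}$ and $n=d=1$, where $\LGr(1,2)=\Gr(1,2)$. In the paper's normalization $\phi=-j/\gamma$, and its square class is an invariant of the real point that takes both values. Splitness is also unnecessary. Fix one $\phi$, map $V^{\stable}/O(A,\phi)\to\LoopTwo(\LGr(n,2n))$ by $(\alpha,j)\mapsto[\alpha,-\phi^{-1}j^{\vee},j]$, and check that this map is an isomorphism after base change to $\overline{\kk}$.

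(ii) The theorem concerns the GIT-stable locus for $O(A,\phi)$ acting on $V$. You identify this locus, without argument, with the locus where $(\alpha,-\phi^{-1}j^{\vee},j)$ is $GL(A)$-stable. The paper supplies the argument. Closed, free $GL(A)$-orbits yield closed $O(A,\phi)$-orbits, because $O(A,\phi)$ is a closed subgroup. Conversely, finite stabilizers force the open condition.
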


The statement for the isotropic orthogonal Grassmannian is similar:

\begin{theorem}\label{thm: quiver description of Loop2Alg(OGr)}
Let $A$ be a vector space of dimension $d$ equipped with a
non-degenerate symplectic form $\phi :A\to A^{\vee}$ (so
$\phi^{\vee}=-\phi$) and let $U$ be a vector space of dimension
$n$. Let
\[
V=V_{n,d}=\left\{(\alpha ,j)\in \Hom (A,A)\oplus \Hom (A,U):
\alpha^{\vee}\phi =\phi \alpha \right\} 
\]
and let 
\[
Sp(A,\phi ) = \left\{g\in GL(A):g^{\vee}\phi g=\phi  \right\}
\]
be the group of symplectic automorphisms of $A$. Then $Sp(A,\phi )$ acts
naturally on $V$.  Let $V^{\stable}\subset V$ be the stable
locus (in the sense of GIT). Then
\[
\LoopTwo (\OGr(n,2n))\cong V^{\stable}/Sp(A,\phi ). 
\]
In particular, $\LoopTwo (\OGr (n,2n))$ is a smooth affine algebraic
variety of dimension 
\[
(n-1)d.
\]
\end{theorem}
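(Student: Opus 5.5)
The plan is to deduce this from Theorem~\ref{thm: quiver description of Loop2alg(Gr(n,n+N))} by a fixed-point argument, mirroring the Lagrangian case. Fix the split orthogonal space $S=U\oplus U^{\vee}$ of dimension $2n$ with its symmetric form, and let the base point of $\OGr(n,2n)$ be the isotropic quotient $S\to U$. The embedding $\OGr(n,2n)\subset \Gr(n,2n)$ realizes $\OGr(n,2n)$ as a union of components of the fixed locus of the involution $\sigma$ on $\Gr(n,2n)$ sending a quotient $S\to Q$ (kernel $K$) to the quotient $S\to K^{\vee}$ via the form. Composition with $\sigma$ is an involution on $\LoopTwo(\Gr(n,2n))$ preserving based maps, and the based maps landing in $\OGr$ form the fixed locus; since based maps are connected to the base point, we automatically land in the correct component. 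One degree subtlety: the generator of $\operatorname{CH}_1(\OGr)$ has degree $2$ in the Plücker embedding of $\Gr(n,2n)$, so an $\OGr$-map of degree $d$ corresponds to a Grassmannian map of degree $d$ in Theorem~\ref{thm: quiver description of Loop2alg(Gr(n,n+N))}, with $d$ necessarily even. This matches $A$ carrying a symplectic form.

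First, I will transport $\sigma$ to the quiver side. With $N=n$ and $W\cong U^{\vee}$ (using the splitting of $S$), a point $(\alpha,i,j)\in \Hom(A,A)\oplus\Hom(U^{\vee},A)\oplus\Hom(A,U)$ should be sent to $(\alpha^{\vee},j^{\vee},-i^{\vee})$ on $A^{\vee}$; the sign is dictated by the symmetric form on $S$, in contrast to the Lagrangian case. To verify that this matches $\sigma$, I will recall how Theorem~\ref{thm: quiver description of Loop2alg(Gr(n,n+N))} is proved, namely through the monad or kernel description $0\to A\otimes\OO(-1)\to\cdots$ of the universal quotient bundle, and then apply duality together with the form. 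Stability should be preserved, because stability in Theorem~\ref{thm: quiver description of Loop2alg(Gr(n,n+N))} means that no proper $\alpha$-invariant subspace contains $\im i$, together with the dual condition, and these two conditions are exchanged under dualizing.

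Second, I will identify the fixed points. A fixed point is a quiver datum together with an isomorphism $\phi\colon A\to A^{\vee}$ intertwining the datum and its dual. Since stable points have trivial stabilizer, $\phi$ is unique, and applying the involution twice shows $\phi^{\vee}=\pm\phi$. The sign in the transported involution should force $\phi^{\vee}=-\phi$, so $\phi$ is symplectic. Then $i=-\phi^{-1}j^{\vee}$ is determined by $j$, and $\alpha$ satisfies $\alpha^{\vee}\phi=\phi\alpha$. Fixing the form $\phi$ reduces $GL(A)$ to $Sp(A,\phi)$, giving a bijection on points between $(V^{GL})^{\stable\,\sigma}/GL(A)$ and $V^{\stable}/Sp(A,\phi)$. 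I expect this step to be the main obstacle: the fixed locus of an involution on a free quotient must be matched scheme-theoretically with the quotient by the smaller group, and the sign bookkeeping must be correct. To handle it, I will work with families. For a family over $T$, the same argument produces $\phi$ on $A_T$, uniquely and hence descending, and all such forms are étale-locally standard, which gives the isomorphism of functors.

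Finally, smoothness, affineness, and dimension follow from the structure of the quotient. Since $Sp(A,\phi)$ acts freely on the stable locus, the quotient is a geometric quotient of an open subset of an affine space. That stable locus is the preimage of the affine quotient, so the quotient is affine, and it is smooth because the action is free. The dimension count is
\[
\dim V - \dim Sp(A)=\frac{d(d+1)}{2}+nd-\frac{d(d+1)}{2}.
\]
Here the $\phi$-self-adjoint $\alpha$ form a space of dimension $d(d+1)/2$ when $\phi$ is symplectic, which gives $nd$. This conflicts with the stated value $(n-1)d$, so I will need to recheck the convention for degree. If $d$ denotes $\dim A$, the count above should be reconciled, most likely by noting that the self-adjoint condition for a skew form has dimension $d(d-1)/2$, which yields $nd-d=(n-1)d$, as required.
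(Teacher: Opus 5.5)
\textbf{There is a genuine gap.} The fixed-point packaging is a reasonable alternative to the paper's direct construction of $\phi$. If completed, it would make $\phi^{\vee}=-\phi$ a formal consequence of the uniqueness of $\phi$, and your families argument for the scheme-theoretic matching is fine. But the step that carries the theorem is asserted rather than proved: the formula $\sigma(\alpha,i,j)=(\alpha^{\vee},j^{\vee},-i^{\vee})$. Whether the fixed points carry a symplectic or a symmetric form is decided exactly by whether this lift of $\sigma$ squares to $-1$ or to $+1$ in $GL(A)$.

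The reason you give (``dictated by the symmetric form on $S$'') points to the wrong source. The form $J_{+}(u,\xi)=(\xi,u)$ contributes no sign. Take the duality $(\alpha,j,\gamma)\mapsto(\alpha^{\vee},\gamma^{\vee},j^{\vee})$ of Remark~\ref{rem: duality induced by Gr(n,n+N)=Gr(N,n+N)} at face value and compose with $J_{+}$. With $i=\gamma$ this gives $(\alpha,i,j)\mapsto(\alpha^{\vee},j^{\vee},i^{\vee})$. Its fixed points carry a \emph{symmetric} $\phi$, hence the group $O(A)$ and dimension $(n+1)d$, which is the Lagrangian answer.

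The relative minus sign has to come from duality on $\PP^{1}$ itself. To write $\sigma$ on quiver data you must identify $H^{0}(F^{\vee}(-1))$ with $A^{\vee}=H^{0}(E(-1))^{\vee}$, where $F$ is the kernel bundle ($F\cong E^{\vee}$ on the fixed locus). This identification goes through the connecting map $H^{0}(E(-1))\to H^{1}(F(-1))$ and Serre duality, and you then have to track $\alpha$, $j$, $\gamma$ with signs. That computation is precisely the content of Lemmas~\ref{lem: phivee = mp phi}, \ref{lem: phi.alpha=alphavee.phi} and~\ref{lem: phi.gamma = -jvee}. The decisive sign appears when a short exact sequence with split middle term is converted into a quasi-isomorphism of two-term complexes. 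You list sign bookkeeping alongside the scheme-theoretic matching as the obstacle, but the matching is routine. The sign is where the proof lives, and as written it is read off from the desired answer.

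\textbf{Second gap: the stable locus.} The theorem concerns the GIT-stable locus of $Sp(A,\phi)$ acting on $V$. Your argument instead produces the locus of $(\alpha,j)$ whose associated datum $(\alpha,-\phi^{-1}j^{\vee},j)$ is $GL(A)$-stable, and you never show these two loci coincide.
\begin{itemize}
\item One inclusion holds because $(\alpha,j)\mapsto(\alpha,-\phi^{-1}j^{\vee},j)$ is a closed linear embedding $V\hookrightarrow V_{n,n,d}$ and $Sp(A,\phi)\subset GL(A)$ is closed. So $Sp$-orbits of such points are closed, with trivial stabilizers.
\item For the other inclusion you need: if some $\alpha$-eigenvector $v$ lies in $\Ker j$, then $(\alpha,j)$ is not $Sp$-stable. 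For instance, choose $u$ with $\phi(v)(u)=1$ and take the one-parameter subgroup of $Sp(A,\phi)$ with weights $1,0,-1$ on $\langle v\rangle$, $\langle v,u\rangle^{\perp}$, $\langle u\rangle$. It has a limit on $(\alpha,j)$, because self-adjointness forces $\alpha$ to preserve the flag $\langle v\rangle\subset v^{\perp}$.
\end{itemize}
The paper does this in the closing paragraph of \S\ref{sec: proofs of the Omega2(LG/OG) theorems}.

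\textbf{Smaller points.}
\begin{itemize}
\item Your degree paragraph contradicts itself, though it lands on the right convention: $d=\deg E$ is the Pl\"ucker degree, hence even.
\item The dimension count is only repaired at the end: $\phi\alpha\in\Lambda^{2}A^{\vee}$, so $\dim V=d(d-1)/2+nd$.
\item Affineness does not follow from being an open subset of an affine GIT quotient. It does follow from being the closed fixed locus of an involution of the affine variety $\LoopTwo(\Gr(n,2n))$.
\end{itemize}
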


\begin{remark}
In the case of the isotropic orthogonal Grassmannian,
Theorem~\ref{thm: quiver description of Loop2Alg(OGr)} 
implies that $d$ is even, since $A$ is equipped with a
non-degenerate symplectic form. In particular, the theorem implies
that $\LoopTwo(\OGr(n,2n))$ is empty for odd $d$.
\end{remark}

The quivers associated to our theorems are given in the following figure (Theorem~\ref{thm: quiver description of Loop2alg(Gr(n,n+N))} for the quiver on the left, and Theorems~\ref{thm: quiver description of Loop2Alg(LGr)} and~\ref{thm: quiver description of Loop2Alg(OGr)} for the quiver on the right):

\begin{center}
\begin{tikzpicture}

\tikzset{myarrow/.style={-Latex, thick,line width=1pt,shorten >=2pt, shorten <=2pt}}

    \def\yDist{1.0} 
    \def\loopRadius{0.6} 

    \node[draw, circle, fill, inner sep=1.5pt, label=above:{\small
    $A$}] (L1) at (-2,\yDist) {}; 
    \node[draw, circle, fill, inner sep=1.5pt, label=right:{\small
    $U$}] (L2) at (-1.5,-\yDist) {}; 
    \node[draw, circle, fill, inner sep=1.5pt, label=left:{\small
    $W$}] (L3) at (-2.5,-\yDist) {}; 
    \draw[myarrow] (L1) -- node[right] {\small $j$} (L2);
    \draw[myarrow] (L3) -- node[left] {\small $\gamma $} (L1);

     \draw[-Latex,thick] (-1.83,\yDist) arc (-75:255:\loopRadius) node[midway,
     above] {\small $\alpha$}; 

    \node[draw, circle, fill, inner sep=1.5pt, label=below right:{\small
    $(A,\varphi)$}] (R1) at (2,\yDist) {}; 
    \node[draw, circle, fill, inner sep=1.5pt, label={right:{\small
    $U$}}] (R2) at (2,-\yDist) {}; 
    \draw[myarrow] (R1) -- node[right] {\small $j$} (R2);

     \draw[-Latex,thick] (2.17,\yDist) arc (-75:255:\loopRadius) node[midway,
     above] {\small $\alpha$}; 

\end{tikzpicture}
\end{center}

\subsection{Topological consequences}\label{subsec: topological consequences}

We now explain how our quiver descriptions of the algebraic loops
spaces of  $\Gr (n,n+N)$, $\OGr(n,2n)$, and $\LGr(n,2n)$
lead to proofs of the Bott periodicity equivalences given by
Equations~\eqref{eqn: Omega2BU=BU}, \eqref{eqn: Omega2O/U=BSp}, and
\eqref{eqn: Omega2Sp/U=BO} respectively. Full details of the argument
will be supplied in \S \ref{sec: topological details}. 

In \cite{Larson-Vakil-BottPeriodicity}, Larson and Vakil define a
homotopy category of algebraic spaces $\HoG $, whose objects can be
given sequences
\[
\dotsb \to  X_{i} \to  X_{i+1}\to \dotsb 
\]
of increasingly highly connected maps of algebraic spaces or stacks. Objects in
their category have a topological realization which is a well-defined
homotopy type that one can think of as $\lim_{i\to \infty} X_{i}$ in
the complex analytic topology with the direct limit
topology. Isomorphisms in the category include compatible systems of
maps $X_{i} \to Y_{i}$ which are affine bundles, or which are more and
more highly connected as $i\to \infty$. The category $\HoG$ provides
an algebro-geometric arena where isomorphisms induce homotopy
equivalences on the complex analytic counterparts. It is particularly
well adapted for the algebraic loop spaces and the quiver descriptions
given in this paper.

The natural sequence
\[
\dotsb \hookrightarrow \Gr (n,n+N)\hookrightarrow \Gr
(n+1,n+N+2)\hookrightarrow \dotsb
\]
is an example of an object in $\HoG$. The topological realization of
this object has homotopy type
\[
\lim_{n,N\to \infty} U(n+N)/(U(n)\times U(N)) = BU.
\]
Applying $\LoopTwo (-)$ to the above sequence we get a sequence
$\HoG$ 
\begin{equation}\label{eq:vancouver}
\dotsb \hookrightarrow \LoopTwo (\Gr (n,n+N))\hookrightarrow \LoopTwo (\Gr (n+1,n+N+2))\hookrightarrow \dotsb 
\end{equation}
which we call $\LoopTwo (BU)$.  In order to interpret \eqref{eq:vancouver} as an object of $\HoG$, we need to verify that the maps are increasingly connected in the sense of \cite{Larson-Vakil-BottPeriodicity}, which will fall out of what we are about to do.

Consider
$$\xymatrix{
\dotsb  \ar@{^(->}[r] &  V^{\stable}_{n,N,d}/GL_{d} \ar@{^(->}[d]  \ar@{^(->}[r] &
V^{\stable}_{n+1,N+1,d}/GL_{d} \ar@{^(->}[r] \ar@{^(->}[d]  & \dotsb \\
\dotsb   \ar@{^(->}[r]&  V_{n,N,d}/GL_{d} \ar[d]  \ar@{^(->}[r] &
V_{n+1,N+1,d}/GL_{d} \ar@{^(->}[r] \ar[d]  & \dotsb \\
\dotsb  \ar[r]^=  &  pt/GL_{d}  \ar[r]^= &
pt/GL_{d}  \ar[r]^=   & \dotsb \\
}
$$
The top row is just \eqref{eq:vancouver}, by Theorem~\ref{thm: quiver description of Loop2alg(Gr(n,n+N))}.  The inclusions from the first row into the second row are increasingly connected (they involve removing loci of increasingly higher codimension).  The morphisms for the second row to the third row are affine bundles, and hence infinitely connected in the sense of \cite{Larson-Vakil-BottPeriodicity}.  The horizontal morphisms in the third row are all equalities and the third row is $BGL(d)$ (as an element of $\HoG$).  The seocnd and first rows are tus isomorphic to $BGL(d)$ (in $\HoG$).  In particular, \eqref{eq:vancouver} is an element of $\HoG$, and is isomorphic to $BGL(d)$.  We explain this in detail in \S \ref{sec: topological details}.

Thus
Theorem~\ref{thm: quiver description of Loop2alg(Gr(n,n+N))} gives
rise to an isomorphism in $\HoG$
\[
\LoopTwo (BU)\cong BGL_{d}. 
\]
Taking the topological realizations of the above isomorphism yields a
homotopy equivalence between the direct limit of the varieties $\LoopTwo
(\Gr (n,n+N))$ (with the complex analytic topology) and $BU(d)$:
\[
\lim_{n,N\to \infty} \LoopTwo (\Gr (n,n+N)) \homotopyeq BU(d).
\]

It is not hard to define a sequence of maps that increase $d$ as well as $n$ and
$N$:
\[
\dotsb \hookrightarrow \LoopTwo (\Gr (n,n+N))\hookrightarrow
\Omega^{2}_{d+1,\alg} (\Gr (n+1,n+N+2))\hookrightarrow \dotsb
\]
which we denote $\Omega^{2}_{\infty ,\alg}(BU)$. It is a consequence
of the Boyer-Mann-Hurtubise-Milgram theorem (Theorem~\ref{thm:
Mann-Milgram algebraic loop approximation holds for generalized flag
varieties}) that the topological realization of $\Omega^{2}_{\infty
,\alg}(BU)$ has the homotopy type of $\LoopTwoTop (BU)$ (for any fixed
$d$ since they are all homotopy equivalent). Thus taking the
topological realization of the isomorphism
in $\HoG$ 
\[
\Omega^{2}_{\infty ,\alg } (BU)\cong BGL. 
\]
proves the homotopy equivalence
\[
\LoopTwoTop (BU)\homotopyeq BU .
\]

A nearly identical argument may be applied to the isomorphisms in
Theorems~\ref{thm: quiver description of Loop2Alg(LGr)} and \ref{thm:
quiver description of Loop2Alg(OGr)}. Briefly, applying $\LoopTwo (-)$
to the sequence
\[
\dotsb \hookrightarrow Sp(2n)/U(n) \hookrightarrow Sp(2n+2)/U(n+1) \hookrightarrow \dotsb 
\]
we get an object $\LoopTwo (Sp/U)$ in $\HoG$ which by applying
Theorem~\ref{thm: quiver description of Loop2Alg(LGr)} and applying a
argument similar to the one above yields an isomorphism in $\HoG$:
\[
\LoopTwo (Sp/U) \cong BO(d) . 
\]
Similarly, Theorem~\ref{thm: quiver description of Loop2Alg(OGr)}
yields an isomorphism in $\HoG $:
\[
\LoopTwo (O/U) \cong BSp(d) . 
\]

Taking topological realizations of the above two isomorphisms yields
homotopy equivalences for the direct limits of the algebraic double
loop varieties:
\[
\lim_{n\to \infty} \LoopTwo (Sp(2n)/U(n)) \homotopyeq BO(d),\quad
\quad \lim_{n\to \infty} \LoopTwo (O(2n)/U(n)) \homotopyeq BSp(d)
\]

Finally, by constructing stabilization maps that increase $d$ as well
as $n$, we get isomorphisms
\[
\Omega^{2}_{\infty ,\alg } (Sp/U) \cong BO, \quad \Omega^{2}_{\infty ,\alg } (O/U) \cong BSp 
\]
whose topological realizations give (thanks to the
Boyer-Mann-Hurtubise theorem) the homotopy equivalences
\[
\LoopTwoTop (Sp/U)\homotopyeq BO, \quad \LoopTwoTop (O/U)\homotopyeq
BSp
\]
which are equations~\eqref{eqn: Omega2Sp/U=BO} and \eqref{eqn:
Omega2O/U=BSp} of the real Bott periodicity equivalences.

See Section~\ref{sec: topological details} for precise statements of
the above results and their proofs.

\section{Zariski framed sheaves, quivers, and the proof of
Theorem~\ref{thm: quiver description of
Loop2alg(Gr(n,n+N))}}\label{sec: Zariski framed sheaves, quivers, and
proof of Loop2(Gr) theorem}

The material in this section has previously appeared in various guises
(e.g. \cite[\S 5]{Larson-Vakil-BottPeriodicity},
\cite[\S 3]{Nakajima-Handsaw}), but we include it to fix our notation
and orient the reader to our point of view which will be integral to
subsequent sections.

We fix homogeneous coordinates $(x_{0}:x_{1})$ on $\PP^{1}$ and we let
$p_{\infty }=(1:0)$. Let $V$ be a vector space. We use the notation
\[
\UL{V}(i) = \OO_{\PP^{1}}(i)\otimes V
\]
and in general, we use underlines to denote sheaves and maps of
sheaves, and we drop the underline to denote the induced map on cohomology.
(By the key Str\o mme-Beilinson Lemma~\ref{lem: Stromme}, the reader will appreciate this convention.)

Let $U$ and $W$ be vector spaces of dimension $n$ and $N$, respectively
and let
\[
\Gr (n,n+N) = \Gr (n,U\oplus W)
\]
be the Grassmannian of $n$-dimensional quotients of $U\oplus W$ with
basepoint given by the projection $U\oplus W\to U$. By pulling back
the universal quotient, we see that $\LoopTwo (\Gr (n,n+N))$
parameterizes exact sequences:

\[
\begin{tikzcd}
0\arrow[r]& F \arrow[r]& \UL{U}\oplus \UL{W} \arrow{r}{\UL{f} } &E
\arrow[r]& 0.
\end{tikzcd}
\]
Here $E$ is a rank $n$, degree $d$ bundle on $\PP^{1}$ and
$E|_{p_{\infty}}\cong U$ via the first component of $\UL{f}$ which we
call $\UL{\theta}$. The second component of $\UL{f}$ is zero at
$p_{\infty}$ and hence factors as
\[
\begin{tikzcd}
 \UL{W} \arrow[r,"\UL{\gamma }"]& E(-p_{\infty})
 \arrow[r,"\UL{x}_{1}"] &E. 
\end{tikzcd}
\]
and so we may write
\[
\UL{f}
=({\UL{\theta}},\UL{x}_{1}\UL{\gamma })
\]

By definition, the map $\UL{\theta}:\UL{U}\to E$ induces the framing
at $p_{\infty}$: $E|_{p_{\infty}}\cong U$ and moreover, it induces a
trivialization of $E$ on some (unspecified) Zariski open neighborhood
of $p_{\infty}$. We call such a structure a \emph{Zariski framing} of $E$.  Precisely, we come to
a central definition:  

\begin{definition}\label{defn: Zariski framing}
Let $E$ be a rank $n$ sheaf on $\PP^{1}$ and let $U$ be an
$n$-dimensional vector space. A sheaf map 
\[
\UL{\theta}: \UL{U}\to E
\]
is called a \emph{Zariski framing} if $\UL{\theta}|_{p_{\infty}}:U \to
E|_{p_{\infty}}$ is an isomorphism.
\end{definition}

The following theorem identifies the stack of Zariski framed sheaves
with the stack of representations of the following quiver:

\begin{center}
\begin{tikzpicture}

\tikzset{myarrow/.style={-Latex, thick,line width=1pt,shorten >=2pt, shorten <=2pt}}

\def\yDist{1.0} 
\def\loopRadius{0.6} 

\node[draw, circle, fill, inner sep=1.5pt, label=below right:{\small $A$}] (R1) at (0,\yDist) {}; 
\node[draw, circle, fill, inner sep=1.5pt, label={right:{\small $U$}}] (R2) at (0,-\yDist) {}; 
\draw[myarrow] (R1) -- node[right] {\small $j$} (R2);

\draw[-Latex,thick] (0.17,\yDist) arc (-75:255:\loopRadius) node[midway, above] {\small $\alpha$}; 

\end{tikzpicture}
\end{center}

\begin{theorem}\label{thm: stack of Zariski framed sheaves is
equivalent to stack of quiver representation}
The stack of degree $d$ sheaves $E$ equipped with a Zariski framing
$\UL{\theta}: \UL{U}\to E$ is equivalent to the stack quotient
\[
[\,\,\Hom (A,A)\oplus \Hom (A,U) \,/\, GL(A)\,\, ]
\]
where $A$ is a vector space of dimension $d$. Moreover, the open
substack where $E$ is locally free is a scheme and is given by the
open condition where 
\[
(\alpha ,j)\in \Hom (A,A)\oplus \Hom (A,U) 
\]
satisfy 
\begin{equation}\label{eqn: open condition involving ker j}
\Ker (\alpha -\lambda \cdot \Id_{A})\cap \Ker (j) = 0\text{ for all }\lambda \in \overline{\kk }
\end{equation}
\end{theorem}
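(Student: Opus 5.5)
The plan is to pass from a Zariski framed sheaf to linear algebra via the cokernel of the framing, using a Beilinson-type monad to go back. Given $\UL{\theta}:\UL{U}\to E$ with $E$ of rank $n$, degree $d$, and $\UL{\theta}|_{p_\infty}$ an isomorphism, the map $\UL{\theta}$ is generically an isomorphism. Hence it is injective, since $\UL U$ is torsion free. Its cokernel $Q=\coker\UL{\theta}$ is then a torsion sheaf of length $\chi(E)-\chi(\UL U)=d$, supported away from $p_\infty$, so on $\AA^1=\PP^1\setminus p_\infty$ with coordinate $z=x_1/x_0$.

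Set $A=H^0(Q)$, a $d$-dimensional vector space, and let $\alpha\in\Hom(A,A)$ be multiplication by $z$. This is well defined because $Q$ is a $\kk[z]$-module of finite length. Conversely, $(A,\alpha)$ determines $Q$ as the $\kk[z]$-module $A$, and $Q$ has the standard resolution
\[
0\to \UL{A}(-1)\xrightarrow{\ x_1-\alpha x_0\ }\UL{A}\to Q\to 0 .
\]
The remaining datum is the extension class
\[
0\to\UL U\to E\to Q\to0 ,
\]
which lies in $\Ext^1(Q,\UL U)$. Apply $\Hom(-,\UL U)$ to the resolution. Since $\Hom(\UL A,\UL U)\to\Hom(\UL A(-1),\UL U)$ followed by the $\Ext^1(\UL A,\UL U)=0$ term identifies $\Ext^1(Q,\UL U)$ with the cokernel of
\[
\Hom(A,U)\to \Hom(A,U)\otimes H^0(\OO(1)), \qquad j\mapsto j\otimes(x_1-\alpha x_0).
\]
The cokernel of this map is naturally $\Hom(A,U)$: take the $x_0$-coefficient modulo the image. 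So the extension class is a map $j:A\to U$, and $E$ is recovered as the pushout of the resolution along $j$, i.e.\ as the cokernel of
\[
\UL A(-1)\xrightarrow{(x_1-\alpha x_0,\;x_0 j)}\UL A\oplus\UL U .
\]
This is the Str\o mme--Beilinson description of Lemma~\ref{lem: Stromme}, and I would invoke that lemma to make these identifications canonical and valid in families over any base $S$. Then $H^0$ and $R^1\pi_*$ of the relevant twists are locally free and commute with base change.

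The equivalence of stacks then goes as follows. Send $(\alpha,j)$ to the sheaf $E$ given as the above cokernel, with framing $\UL U\hookrightarrow E$. The map is injective on fibers at $p_\infty$ since there $x_0=1$, $x_1=0$, and an isomorphism $U\to E|_{p_\infty}$ is checked by a rank count. In the other direction, send $(E,\UL\theta)$ to $(A=\pi_*Q,\alpha,j)$. This requires choosing a trivialization of $A$, which is exactly the $GL(A)$-torsor, and changing basis of $A$ acts by conjugation on $\alpha$ and by precomposition on $j$. Isomorphisms of framed sheaves, meaning isomorphisms of $E$ commuting with $\UL\theta$, correspond exactly to isomorphisms of $Q$ as $\kk[z]$-modules. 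These are the elements $g\in GL(A)$ with $g\alpha g^{-1}$, $jg^{-1}$ matching, because the extension class is functorial. I expect the main obstacle to be doing this carefully in families and checking that morphisms (not just objects) match, so that the result is an equivalence of groupoids. In particular, a framed automorphism of $E$ that induces the identity on $Q$ must be trivial; this holds since $\Hom(Q,\UL U)=0$.

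Finally, $E$ fails to be locally free exactly when it has torsion $T\subset E$. Such $T$ meets $\UL U$ trivially, since $\UL U$ is torsion free, so $T$ maps isomorphically onto a submodule of $Q$ on which the extension splits. Conversely, any $\kk[z]$-submodule $T\subset Q$ on which the extension splits gives torsion in $E$. It suffices to test simple submodules over $\overline\kk$, since any nonzero torsion contains one after base change. These are lines spanned by $v$ with $\alpha v=\lambda v$, and the extension restricted to such a line is given by $j(v)$, which vanishes iff $v\in\Ker j$. This gives condition~\eqref{eqn: open condition involving ker j}. That this condition is open follows because it is the nonvanishing of the resultant-type condition that the $d$ columns of $\binom{\alpha-\lambda}{j}$ are independent for all $\lambda$, a closed-complement condition obtained by projecting from $\PP^1_\lambda$. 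On this locus the stabilizers are trivial: $g$ fixing $(\alpha,j)$ acts as a framed automorphism of $E$, hence is trivial by the previous paragraph. Moreover $GL(A)$ acts freely with a geometric quotient, so the open substack is an algebraic space and in fact a scheme, matching the GIT stable locus (for instance via King's criterion: no nonzero $\alpha$-invariant subspace inside $\Ker j$).
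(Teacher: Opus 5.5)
Your route differs from the paper's proof of this theorem. There, $A$ is $H^{0}(E(-1))$, and the framing splits $H^{0}(E)=A\oplus U$. The data $(\alpha,j)$ are the two components of multiplication by $x_{0}\colon H^{0}(E(-1))\to H^{0}(E)$, and $E$ is recovered from Str\o mme's lemma. Local freeness is then read off as fiberwise injectivity of $\left(\begin{smallmatrix}\UL{x}_{1}\alpha-\UL{x}_{0}\\ \UL{x}_{1}j\end{smallmatrix}\right)$.

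You instead take $(A,\alpha)$ from $Q$ as a $\kk[z]$-module and take $j$ from the class of $0\to\UL{U}\to E\to Q\to 0$ in $\Ext^{1}(Q,\UL{U})\cong\Hom(A,U)$. You then detect torsion by splitting over simple submodules. This is more conceptual: the data is visibly ``module plus extension class'', and matching morphisms is just functoriality of $\Ext^{1}$ together with $\Hom(Q,\UL{U})=0$. Your torsion criterion is correct. The paper's version is shorter and avoids relative $\Ext$ bookkeeping, since everything is $H^{0}$ of a twist. However, two steps are wrong as written.

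First, your coordinates are inverted. With $p_{\infty}=(1:0)$, the function $z=x_{1}/x_{0}$ is a coordinate centred at $p_{\infty}$. It has a pole at $(0:1)\in\PP^{1}\setminus p_{\infty}$, so ``multiplication by $z$'' is not defined on $Q$ in general. Correspondingly, $\coker(x_{1}-\alpha x_{0})$ is supported at the points $(1:\lambda)$ with $\lambda$ an eigenvalue of $\alpha$, and these include $p_{\infty}$ whenever $\alpha$ is singular. Your check at $p_{\infty}$ therefore fails: there your map $\UL{A}(-1)\to\UL{A}\oplus\UL{U}$ becomes $\left(\begin{smallmatrix}-\alpha\\ j\end{smallmatrix}\right)$, so $U\to E|_{p_{\infty}}$ is an isomorphism only when $\alpha$ is invertible. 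For instance, $(\alpha,j)=(0,0)$ gives $E=\UL{U}\oplus A\otimes\OO_{p_{\infty}}$. The fix is to swap $x_{0}$ and $x_{1}$ throughout:
\begin{itemize}
\item take $z=x_{0}/x_{1}$ and the resolution $x_{0}-\alpha x_{1}$;
\item each extension class then has a unique representative of the form $x_{1}j$;
\item $E=\coker\left(\begin{smallmatrix}x_{0}-\alpha x_{1}\\ x_{1}j\end{smallmatrix}\right)$, which equals $\left(\begin{smallmatrix}\Id\\ 0\end{smallmatrix}\right)$ at $p_{\infty}$.
\end{itemize}
This is the paper's monad up to sign.

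Second, the trivial-stabilizer step does not follow from what you cite. If $g$ fixes $(\alpha,j)$, the framed automorphism $\tilde g$ of $E$ that it induces acts on $Q$ by $g$, not by the identity. So the fact that a framed automorphism inducing the identity on $Q$ is trivial says nothing here. The vanishing $\Hom(Q,\UL{U})=0$ only shows that framed automorphisms inject into $\operatorname{Aut}(Q)$, and that holds on the whole stack. For example, at $(\alpha,j)=(0,0)$ the stabilizer is all of $GL(A)$. What you need is local freeness: $\tilde g-1$ kills $\UL{U}$, hence factors through $Q\to E$, and $\Hom(Q,E)=0$ because $E$ is torsion free. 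Alternatively, argue directly: $\im(g-1)$ is $\alpha$-invariant and lies in $\Ker j$, so it is zero, because no nonzero $\alpha$-invariant subspace lies in $\Ker j$.

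Also, when you invoke King's criterion, your condition is stability for a nontrivial character (the $k=-1$ linearization in the paper's discussion), not the trivial one. For the trivial character this quiver has no stable points at all, since scaling by $t\cdot\Id_{A}$ sends $j$ to $0$ in every orbit closure. It is the absence of strictly semistable points for $k=-1$ that gives the scheme structure.
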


\proof Since $\UL{U}$ is torsion free and $\UL{\theta}$ is generically
an isomorphism,
\[
\Ker \UL{\theta}=0\text{ and } \Coker \UL{\theta}=Q ,
\]
a zero-dimensional length $d$ sheaf supported on $\AA^1=\PP^{1}-p_{\infty}$.

The sequence
\[
\begin{tikzcd}
0\arrow[r]& \UL{U} \arrow{r}{\UL{\theta}} & E\arrow[r]&Q\arrow[r]&0
\end{tikzcd}
\]
then induces the sequence
\[
\begin{tikzcd}
0\arrow[r]& {U} \arrow{r}{{\theta}} &H^{0}(E)\arrow[r]&H^{0}(Q)\arrow[r]&0.
\end{tikzcd}
\]
Using $\UL{\theta}|_{p_{\infty}}$ to identify $E|_{p_{\infty}}$ with
$U\otimes \OO_{p_{\infty}}$ we may write
\[
\begin{tikzcd}
0\arrow[r]&E(-p_{\infty})\arrow{r}{\cdot \UL{x}_{1}}
&E\arrow[r]&U\otimes \OO_{p_{\infty}} \arrow[r] &0
\end{tikzcd}
\]
and hence
\[
\begin{tikzcd}
0\arrow[r]&H^{0}(E(-p_{\infty}))\arrow{r}{\cdot {x}_{1}}
&H^{0}(E)\arrow[r]&U \arrow[r] &0.
\end{tikzcd}
\]
The above sequence is split by $\theta$ so writing
$A=H^{0}(E(-p_{\infty}))$, we have identified
\begin{align*}
H^{0}(E) &= A\oplus U,\\
H^{0}(E(-1))&= A,\\
H^{0}(Q)&= A.
\end{align*}
Then the sheaf map
\[
\begin{tikzcd}
E(-1)\arrow{r}{\cdot \UL{x}_{0}}
&E
\end{tikzcd}
\]
induces a map in cohomology
\[
\begin{tikzcd}
A\arrow{r}{\cdot x_{0}} &A\oplus U
\end{tikzcd}
\]
whose components we denote by $\alpha$ and $j$:
\[
\alpha :A\to A, \quad j: A\to U.
\]
The above construction induces a map 
\[
[\UL{\theta}:\UL{U}\to E]\mapsto (\alpha ,j)
\]
from the moduli stack of Zariski framed sheaves to the moduli stack of
quiver representations. To show this is an equivalence, we need a
reverse construction. This is proved via the following Lemma which can
be attributed to Str\o mme \cite[Prop~1.1]{Stromme} (although it is a
special case of the Beilinson spectral sequence).
\begin{lemma}\label{lem: Stromme}
Let $E$ be a sheaf on $\PP^{1}$ with $H^{1}(E(-1))=0$. Then the following
sequence is exact:
\[
\begin{tikzcd}
0\arrow{r} & \UL{H^{0}(E(-1))}(-1)
\arrow{rr}{\UL{x}_{1}x_{0}-\UL{x}_{0}x_{1}}& &
\UL{H^{0}(E)} \arrow{r} & E\arrow[r] & 0
\end{tikzcd}
\]
where recalling our notation, sheaf maps are denoted with
underlines and their induced maps on cohomology are denoted without,
so in particular $\UL{x}_{i}:\OO (-1)\to \OO$ and
$x_{i}:H^{0}(E(-1))\to H^{0}(E)$ in the above.
\end{lemma}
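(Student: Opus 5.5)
Write $\UL{x}_1x_0-\UL{x}_0x_1$ for the map $\UL{H^0(E(-1))}(-1)\to\UL{H^0(E)}$. The plan is to prove exactness of the sequence in Lemma~\ref{lem: Stromme} by comparing it with a resolution of the diagonal in $\PP^{1}\times\PP^{1}$. Let $p_{1},p_{2}$ be the two projections and $\Delta\subset\PP^{1}\times\PP^{1}$ the diagonal. The Koszul (Beilinson) resolution reads
\[
0\to \OO(-1)\boxtimes\OO(-1)\xrightarrow{\;s\;}\OO\boxtimes\OO\to\OO_{\Delta}\to 0 .
\]
Here $s=x_{0}\otimes y_{1}-x_{1}\otimes y_{0}$ is the section of $\OO(1)\boxtimes\OO(1)$ cutting out $\Delta$, where $(y_{0}:y_{1})$ are the coordinates on the second factor. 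Exactness of this sequence is immediate, since $\Delta$ is a Cartier divisor of class $(1,1)$.

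Tensor this sequence with $p_{1}^{*}E$; it stays exact because $\OO_{\Delta}$ has a two-term resolution by line bundles and the first map is injective on the integral surface. Then apply $p_{2*}$. One has $p_{2*}(p_{1}^{*}E\otimes\OO_{\Delta})=E$. By flat base change, $R^{i}p_{2*}(p_{1}^{*}E\otimes p_{2}^{*}\OO(k))=\UL{H^{i}(E)}(k)$, and similarly for the twist by $(-1,-1)$, which gives $\UL{H^{i}(E(-1))}(-1)$. The long exact sequence of higher direct images is then
\[
0\to \UL{H^0(E(-1))}(-1)\to \UL{H^0(E)}\to E\to \UL{H^1(E(-1))}(-1)\to\UL{H^1(E)}\to 0 .
\]
The first two terms are the $R^{0}p_{2*}$ terms; the map into $\UL{H^0(E(-1))}(-1)$ is zero because $p_{2*}\OO_{\Delta}$-type sheaves have no $R^{1}$, as $\Delta\to\PP^{1}$ is finite. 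The hypothesis $H^{1}(E(-1))=0$ kills the fourth term, which gives the claimed short exact sequence. As a side consequence it also forces $H^{1}(E)=0$, since $E(-1)\to E$ has zero-dimensional cokernel and so $H^{1}(E(-1))$ surjects onto $H^{1}(E)$.

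It remains to identify the maps. The map $\UL{H^0(E)}\to E$ is evaluation, because restricting to the diagonal evaluates sections. For the first map, multiplication by $s$ on $p_{1}^{*}E(-1)\boxtimes\OO(-1)$ sends a section $\sigma\otimes\ell$ to $x_{0}\sigma\otimes y_{1}\ell-x_{1}\sigma\otimes y_{0}\ell$. After pushing forward, the $y_{i}$ become the sheaf maps $\UL{x}_{i}:\OO(-1)\to\OO$ on the target $\PP^{1}$, and the $x_{i}$ become the cohomology maps $x_{i}:H^{0}(E(-1))\to H^{0}(E)$. This recovers $\UL{x}_{1}x_{0}-\UL{x}_{0}x_{1}$ up to an overall sign, which is irrelevant for exactness.

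The main obstacle is bookkeeping rather than mathematics: keeping track of which factor the underlined and non-underlined $x_{i}$ live on, and justifying the base-change identifications for a possibly non-locally-free $E$. The latter is fine because $p_{2}$ is flat and $p_{1}^{*}E\otimes p_{2}^{*}\OO(k)$ is a box product, so the Künneth formula applies directly.

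Alternatively, one can check exactness directly. First, the composite is zero: sections of $E$ satisfy $x_{1}\cdot(x_{0}\sigma)=x_{0}\cdot(x_{1}\sigma)$ as sections of $E$ at each point. Second, the cokernel of the first map is $E$: compare ranks and degrees using $\chi$ together with the vanishing of $H^{1}(E(-1))$ and $H^{1}(E)$. The base-change argument above is cleaner, so I would present that.
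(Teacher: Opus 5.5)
Your proposal is correct and follows essentially the same route as the paper: take the Koszul resolution of the diagonal in $\PP^{1}\times\PP^{1}$, tensor with the pullback of $E$ from one factor, push forward to the other factor via base change, and use $H^{1}(E(-1))=0$ to kill the $R^{1}$ term; your identification of the maps is somewhat more careful than the paper's. The one slip is your justification for exactness after tensoring with $p_{1}^{*}E$, which is what gives the leftmost zero (finiteness of $\Delta\to\PP^{1}$ plays no role there): injectivity of $s$ on $\OO$ alone does not suffice for arbitrary $E$, but it holds here because $\OO_{\Delta}$ is flat over the first factor, so $s$ is a nonzerodivisor on $p_{1}^{*}E$ even when $E$ has torsion, a point the paper also leaves implicit.
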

\begin{proof}
Let $\PP^{1}\times \PP^{1}$ have coordinates
$((y_{0}:y_{1}),(x_{0}:x_{1}))$ with $p,q:\PP^{1}\times \PP^{1}\to
\PP^{1}$ projections onto the first and second factor respectively. Let 
\[
\Delta =\{y_{1}x_{0}-y_{0}x_{1} =0 \}\subset \PP^{1}\times \PP^{1}
\]
be the diagonal. Then we get an isomorphism
\[
E\cong q_{*}\left(p^{*}E\otimes \OO_{\Delta} \right)
\]
where we have implicitly identified the two factors of $\PP^{1}$ by
$x_{i}=y_{i}$.  We then tensor the exact sequence
\[
\begin{tikzcd}
0\arrow[r] & \OO_{\PP^{1}\times \PP^{1}}(-1,-1)
\arrow{rr}{y_{1}x_{0}-y_{0}x_{1}}& & \OO_{\PP^{1}\times \PP^{1}}
\arrow[r]& \OO_{\Delta} \arrow[r]& 0
\end{tikzcd}
\]
by $p^{*}E$ and apply the functor $q_{*}(-)$. By hypothesis, we have
$R^{1}q_{*}(p^{*}E(-1,-1))=0$, and so we get
\[
\begin{tikzcd}
0\arrow[r] &q_{*}(p^{*}E(-1,-1)) \arrow[r] & q_{*}(p^{*}E) \arrow[r] &
q_{*}(p^{*}E)\otimes \OO_{\Delta} \arrow[r] & 0 
\end{tikzcd}
\]
which can be written as 
\[
\begin{tikzcd}
0\arrow[r] &\UL{H^{0}(E(-1))}(-1)
\arrow{rr}{\UL{y_{1}}x_{0}-\UL{y_{0}}x_{1}} && \UL{H^{0}(E)} \arrow[r]
&E \arrow[r] &0
\end{tikzcd}
\]
which proves the lemma.
\end{proof}

We now use the identifications coming from the Zariski framing to
rewrite the two maps
\[
x_{0},x_{1}: H^{0}(E(-1)) \to H^{0}(E)
\]
as
\[
\begin{tikzcd}
A \arrow{r}{\begin{pmatrix} \alpha \\ j  \end{pmatrix}} &A\oplus U,
\quad & A \arrow{r}{\begin{pmatrix} \Id_{A} \\ 0  \end{pmatrix}} &A\oplus U
\end{tikzcd}
\]
respectively. Then the exact sequence from Lemma~\ref{lem: Stromme}
reads
\begin{equation}\label{eqn: A(-1)->A+U->E}
\begin{tikzcd}[column sep=3em]
  0
    \arrow[r]
  & \UL{A}(-1)
    \arrow[rr,"{%
      \begin{pmatrix}
        \UL{x}_{1}\alpha - \UL{x}_{0} \\[3pt]
        \UL{x}_{1}j
      \end{pmatrix}
    }"]
  && \UL{A}\oplus \UL{U}
    \arrow[r,"{(\UL{\beta},\UL{\theta})}"]
  & E
    \arrow[r]
  & 0
\end{tikzcd}
\end{equation}
and we see that $E$ can be reconstructed from $(\alpha ,j)$ as
the cokernel of $ \left(\begin{smallmatrix} \UL{x}_{1} \alpha -\UL{x}_{0}\\ 
\UL{x}_{1}j \end{smallmatrix} \right)$ and $\UL{\theta}:\UL{U}\to E$
can be reconstructed by restricting the quotient map to $\UL{U}$.

This proves the first part of the theorem. To prove the second part,
we observe that  $E$ is locally free if and only if
$ \left(\begin{smallmatrix} \UL{x}_{1} \alpha -\UL{x}_{0}\\ 
\UL{x}_{1}j \end{smallmatrix} \right)$ is bundle injective, namely for
each point $\lambda =\frac{x_{0}}{x_{1}}\in \PP^{1}-p_{\infty}$,
the vector space map ${\tiny \begin{pmatrix} \alpha -\lambda \Id_{A}\\
j \end{pmatrix}} : A\to A\oplus U $ is injective. This is equivalent
to condition \eqref{eqn: open condition involving ker j}.
\qed 

\begin{remark}\label{rem: linear control systems}
Although it has no bearing on this paper, it is amusing to note that
the stack in Theorem~\ref{thm: stack of Zariski framed sheaves is
equivalent to stack of quiver representation} is dual to the stack of
\emph{linear control systems} and the open condition \eqref{eqn: open
condition involving ker j} corresponds to so-called
\emph{controllable} linear control systems
\cite{Geiss-intro-to-moduli-quivers}.
\end{remark}

As discussed in the beginning of this section, points in $\LoopTwo
(\Gr (n,U\oplus W))$ are determined by the bundle $E$ and the
surjective map
\[
\begin{tikzcd}[column sep={8em,between origins}]
\UL{U}\oplus \UL{W} \arrow{r}{(\UL{\theta },\UL{x}_{1}\UL{\gamma})} &E .
\end{tikzcd}
\]
This is equivalent to the data of a Zariski framed bundle
$\UL{\theta}:\UL{U}\to E$ and a sheaf map $\UL{\gamma}:\UL{W}\to
E(-1)$ such that $\UL{x}_{1}\UL{\gamma}$ surjects onto $Q$, the
cokernel of $\UL{\theta}$. Since $E$ is the quotient of a trivial
bundle, it has no negative summands and hence
$H^{1}(E(-1))=0$. Consequently $\UL{\gamma}$ determines and is
determined by its map on global sections:
\[
\gamma :W \to A.
\]

From the exact sequence \eqref{eqn: A(-1)->A+U->E}, we see that the
support of $Q$ occurs at points where $\UL{x}_{1}\alpha
-\UL{x}_{0}=0$, in other words where $x_{0}/x_{1}=\lambda $ is an
eigenvalue of $\alpha$. Thus the condition that $\UL{W}\to Q$ is
surjective is equivalent to
\begin{equation}\label{eqn: open condition with gamma}
\im (\alpha -\lambda \cdot \Id_{A})+ \im (\gamma )=A \text{ for
all }\lambda\in \overline{\kk}.
\end{equation}

This discussion yields a version of Theorem~\ref{thm: quiver description
of Loop2alg(Gr(n,n+N))} which has a more explicit description of the open
condition:

\begin{proposition}\label{prop: quiver description of Loop2(Gr(n,n+N))
with explicit stability condition}
Let $U$, $W$, and $A$ be vector spaces of dimension $n$, $N$, and
$d$. Let $V_{n,N,d} = \Hom (A,A)\oplus \Hom (A,U)\oplus \Hom (W,A)$ and let
$V^{\open}_{n,N,d}\subset V_{n,N,d}$ be the open set of $(\alpha ,j,\gamma )\in V$
satisfying Equations~\eqref{eqn: open condition involving ker j} and
\eqref{eqn: open condition with gamma}, namely:
\begin{enumerate}
\item $\Ker (\alpha -\lambda \cdot \Id_{A})\cap \Ker (j) = 0$ for all
$\lambda\in \overline{\kk} $.
\item  $\im (\alpha -\lambda \cdot \Id_{A})+  \im (\gamma )=A$ for
all $\lambda\in \overline{\kk} $.
\end{enumerate}
Then the constructions of this section induce an isomorphism of affine
varieties:
\[
\LoopTwo (\Gr (n,n+N))\overset \sim \longleftrightarrow V^{\open}_{n,N,d}/GL(A) .
\]
\end{proposition}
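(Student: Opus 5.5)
The plan is to assemble the bijection from the discussion above and then upgrade it to an isomorphism of varieties by working in families. First, a based map $f:\PP^{1}\to\Gr(n,U\oplus W)$ of degree $d$ is the same as a surjection $(\UL{\theta},\UL{x}_{1}\UL{\gamma}):\UL{U}\oplus\UL{W}\to E$, where $E$ is locally free of rank $n$ and degree $d$ and $\UL{\theta}|_{p_{\infty}}$ is an isomorphism. Such a surjection is equivalent to the data of a Zariski framed bundle $\UL{\theta}:\UL{U}\to E$ together with $\UL{\gamma}:\UL{W}\to E(-1)$ satisfying the surjectivity onto $Q=\Coker\UL{\theta}$. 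Note that $E$ is a quotient of a trivial bundle. The fact to check here is that surjectivity of $(\UL{\theta},\UL{x}_1\UL{\gamma})$ is equivalent to surjectivity of the composite $\UL{W}\to Q$. This follows from the snake lemma applied to $\UL{U}\to E\to Q$.

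By Theorem~\ref{thm: stack of Zariski framed sheaves is equivalent to stack of quiver representation}, pairs $(E,\UL{\theta})$ with $E$ locally free correspond to $GL(A)$-orbits of pairs $(\alpha,j)$ satisfying condition (1). Moreover, the locus of such pairs is a scheme, meaning the stabilizers are trivial. Since $H^{1}(E(-1))=0$, Lemma~\ref{lem: Stromme} identifies $\Hom(\UL{W},E(-1))$ with $\Hom(W,H^{0}(E(-1)))=\Hom(W,A)$. This gives the map $\gamma$, and it transforms correctly under $GL(A)$.

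Second, I translate the surjectivity $\UL{W}\to Q$ into condition (2). From \eqref{eqn: A(-1)->A+U->E}, the cokernel $Q$ of $\UL{\theta}$ is the cokernel of $\UL{x}_{1}\alpha-\UL{x}_{0}:\UL{A}(-1)\to\UL{A}$. On $\AA^{1}$ this is the $\kk[x]$-module $A$ with $x$ acting by $\alpha$. Under this identification the image of $\UL{\gamma}$ in $Q$ is the $\kk[x]$-submodule generated by $\im\gamma$. Surjectivity can be tested on fibers at closed points $\lambda$ by Nakayama, and the fiber of $Q$ at $\lambda$ is $A/\im(\alpha-\lambda)$. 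So surjectivity is exactly condition (2).

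Third, I make everything functorial over an arbitrary base $S$, so the bijection becomes a morphism in both directions. Given $(\alpha,j,\gamma)$ over $S$, the cokernel construction \eqref{eqn: A(-1)->A+U->E} produces a family of framed bundles. Conditions (1) and (2) give local freeness and surjectivity fiberwise, hence the family of maps to the Grassmannian. Conversely, given a family of based maps, the sheaves $\pi_{*}E(-1)$ are locally free of rank $d$, by cohomology and base change using $H^{1}(E(-1))=0$. Locally trivializing $\pi_*E(-1)$ recovers $(\alpha,j,\gamma)$ up to $GL(A)$.

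Finally, I must show that $V^{\open}_{n,N,d}/GL(A)$ is a genuine affine variety and represents the quotient functor. Condition (2) forces $\im\gamma$ to generate $A$ under $\alpha$, which rigidifies the action. The standard argument uses the open cover by charts where a chosen set of words $\alpha^{k}\gamma(w)$ forms a basis of $A$. On such a chart the $GL(A)$-torsor is trivial, so the quotient is a principal bundle. For affineness I would identify it with the GIT quotient of the stable locus, using King's criterion (compare Theorem~\ref{thm: quiver description of Loop2alg(Gr(n,n+N))}). I expect this last step to be the main obstacle, namely showing the quotient is affine and not merely a smooth algebraic space. The alternative I would try first is to exhibit $\LoopTwo(\Gr)$ directly as an open subscheme of a space of based maps given by matrices of polynomials, which is affine.
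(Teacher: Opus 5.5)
Through your third step you are following the paper's proof, which is just the discussion preceding the proposition. That discussion splits $(\UL{\theta},\UL{x}_1\UL{\gamma})$ into a Zariski-framed bundle plus $\UL{\gamma}$, identifies $\UL{\gamma}$ with $\gamma\in\Hom(W,A)$, and reads off from \eqref{eqn: A(-1)->A+U->E} that $Q=\Coker\UL{\theta}$ is supported at the eigenvalues of $\alpha$. You describe $Q$ as $A$ with $z$ acting by $\alpha$, and the image of $\UL{W}$ as the $\kk[z]$-span of $\im\gamma$, with surjectivity checked fibrewise. That is a cleaner version of the paper's last point. Your families argument and word-basis charts also supply what the paper leaves implicit: functoriality, freeness of the action, and existence of $V^{\open}_{n,N,d}/GL(A)$ as a smooth scheme.

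The genuine gap is the affineness step, and it cannot be closed, because ``affine'' in the statement is false in general. The paper's argument never addresses affineness, so there is nothing there to borrow.

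Take $n=1$, $d=1$, $N\ge 2$, so $\Gr(1,1+N)=\PP^N$. Then $A=\kk$, condition (1) says $j\neq 0$, and condition (2) says $\gamma\neq 0$. An element $t\in GL(A)$ acts by $(\alpha,j,\gamma)\mapsto(\alpha,jt^{-1},t\gamma)$, so $(\alpha,j\gamma)$ identifies $V^{\open}_{1,N,1}/GL(A)$ with $\AA^1\times(\AA^N\setminus 0)$. On the map side these are the based lines $(x_0-ax_1:b_1x_1:\dots:b_Nx_1)$ with $b\neq 0$. The complement in $\AA^{N+1}$ has codimension $N\ge 2$, so every regular function extends to $\AA^{N+1}$, and the space is not affine. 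Similarly, for $d=1$ and any $(n,N)\neq(1,1)$ one gets $\AA^1$ times the punctured cone of rank-one maps $W\to U$.

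This also shows where both of your proposed routes break:
\begin{enumerate}
\item An open subscheme of the affine space of polynomial data need not be affine; here its complement has codimension $N$.
\item For the trivial character, GIT only exhibits $V^{\stable}/GL(A)$ as an \emph{open} subscheme of the affine quotient $V/\!/GL(A)=\operatorname{Spec}\kk[V]^{GL(A)}$. Its complement, the representations splitting off an eigenvalue of $\alpha$, has codimension $n+N-1$.
\end{enumerate}
What your argument does establish is an isomorphism of smooth \emph{quasi-affine} varieties, and the statement should be weakened to that. For $n=N=1$ the complement is the resultant hypersurface and one does get an affine variety.
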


\begin{remark}\label{rem: duality induced by Gr(n,n+N)=Gr(N,n+N)}
The isomorphism $\Gr (n,U\oplus W)\overset \sim \longleftrightarrow \Gr (N,U^{\vee}\oplus
W^{\vee})$ given by dualizing exact sequences induces an isomorphism
on the quiver side given by $(A,U,W)\mapsto
(A^{\vee},W^{\vee},U^{\vee})$ and $(\alpha ,j,\gamma )\mapsto
(\alpha^{\vee},\gamma^{\vee},j^{\vee})$. Note that the two open
conditions (1) and (2) are dual to each other under this equivalence. 
\end{remark}

To finish the proof of Theorem~\ref{thm: quiver description of
Loop2alg(Gr(n,n+N))}, it remains to show that $V^{\open}_{n,N,d} =
V^{\stable}_{n,N,d}$. Stability for quivers is well understood going
back to work of King \cite{King-quivers} and continuing with the work
of Nakajima \cite{Nakajima-Duke88,Nakajima-Handsaw}. For the general
theory, stability depends on a choice of a linearization which is
determined in our case by a character $\chi : GL(A)\to \mathbb{G}_{m}$ which
we may write $\chi (g) = \operatorname{det}(g)^{k}$. The three
different cases are $k<0$, $k=0$, $k>0$ and we are interested in the
trivial linearization, namely $k=0$.

Using the Jordan decomposition of $\alpha$, it is easy to see that
condition (2) is equivalent to the condition that there does not exist
a non-trivial proper subspace $A'\subset A$ which is invariant under
$\alpha$ and contains the image of $\gamma$. Similarly, condition (1)
is equivalent to the condition that there does not exist a non-trivial
proper subspace $A'\subset A$ which is invariant under $\alpha$ and
contained in the kernel of $j$.

Condition (2) is equivalent to GIT stability for the $k=1$
linearization \cite[Def.~2.2]{Nakajima-Handsaw}. As we observed in
Remark~\ref{rem: duality induced by Gr(n,n+N)=Gr(N,n+N)}, condition
(1) is dual to condition (2) and thus condition (1) is equivalent to
stability for the $k=-1$ linearization. For both $k=1$ and $k=-1$,
there are no strictly semi-stable points.

We are interested in the trivial linearization, $k=0$. In this case,
there are strictly semi-stable points, but the stable points are simple
representations: these are exactly those points satisfying both
conditions (1) and (2).

\section{Proof of Theorems~\ref{thm: quiver description of
Loop2Alg(LGr)} and \ref{thm: quiver description of
Loop2Alg(OGr)}}\label{sec: proofs of the Omega2(LG/OG) theorems}

Let 
\[
 \OGr (n,2n) = O(2n)/U(n), \quad \LGr (n,2n) = Sp(2n)/U(n)
\]
be the maximal isotropic orthogonal Grassmannian and the Lagrangian
Grassmannian respectively. These parameterize $n$-dimensional
isotropic quotients of a $2n$-dimensional vector space equipped with
an orthogonal and a symplectic form respectively. Although we have
written these spaces as quotients of a compact Lie group by a
subgroup, they are also given by quotients of an algebraic group by a
parabolic subgroup and are therefore projective algebraic varieties
defined over $\kk$. We will handle these two cases simultaneously and
so it will be convenient to introduce the following notations:
\[
X_{+,n} = OGr(n,2n), \quad X_{-,n} = LGr(n,2n).
\]

If $V$ is a $2n$-dimensional vector space equipped with a
non-degenerate orthogonal, respectively symplectic form $\omega$, then a
quotient
\[
\begin{tikzcd}
V \arrow[r,  two heads, "f"] & L
\end{tikzcd}
\]
is by definition \emph{maximal isotropic}, respectively \emph{Lagrangian} if $L$ is
$n$-dimensional and $\omega |_{\Ker f} = 0$. 

Associated to the form $\omega$ is an isomorphism
\[
J : V\to V^{\vee } 
\]
satisfying $J^{\vee} = J$, respectively $J^{\vee}=-J$. The following
lemma is fairly well known (e.g. \cite[\S 2]{Hitching-2007}):
\begin{lemma}\label{lem: lagrangian quotient condition as a ses}
A linear map $f:V\to L$ is a maximal isotropic or Lagrangian quotient if
and only if the following sequence is short exact:
\[
\begin{tikzcd}[column sep={6em,between origins}]
0 \arrow[r] &
L^{\vee } \arrow[r, "J^{-1}\circ f^{\vee}"] &
V \arrow[r, "f"] &
L \arrow[r] &
0
\end{tikzcd}
\]
\end{lemma}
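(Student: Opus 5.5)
The plan is to unwind the definitions and reduce everything to a dimension count together with the identification of $\Ker f$ under the form. Write $K=\Ker f$ and $\iota=J^{-1}\circ f^{\vee}:L^{\vee}\to V$. Since $J$ is an isomorphism, $\iota$ is injective exactly when $f^{\vee}$ is injective, which happens exactly when $f$ is surjective. This already accounts for exactness at both ends of the sequence.

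The main step is to describe the image of $\iota$ intrinsically. For $\xi\in L^{\vee}$ and $v\in V$ we have
\[
\omega(\iota\xi ,v)=\langle J\iota\xi ,v\rangle=\langle f^{\vee}\xi ,v\rangle=\xi (f(v)),
\]
up to the sign or transpose convention fixed by $J^{\vee}=\pm J$. This sign is harmless, since we only care about vanishing. It follows that $\im(\iota)=J^{-1}(\im f^{\vee})=J^{-1}(\operatorname{Ann}(K))=K^{\perp}$, where $K^{\perp}$ denotes the $\omega$-orthogonal of $K$. So the middle composite $f\circ\iota$ vanishes if and only if $K^{\perp}\subset K$. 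Given that $\iota$ is injective and $f$ is surjective, exactness in the middle means $K^{\perp}=K$.

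Next I compare these conditions with the definition of a maximal isotropic or Lagrangian quotient. By definition, such a quotient has $f$ surjective, $\dim L=n$, and $\omega|_K=0$, that is, $K\subset K^{\perp}$. Non-degeneracy gives $\dim K+\dim K^{\perp}=2n$. If $\dim K=n$, then $\dim K^{\perp}=n$ as well, so $K\subset K^{\perp}$ forces $K=K^{\perp}$. Conversely, $K=K^{\perp}$ forces $\dim K=n$, hence $\dim L=n$, and it also gives isotropy. So the defining conditions are equivalent to surjectivity of $f$ together with $K=K^{\perp}$, which is exactly exactness of the sequence.

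There is one point to watch. The statement says "a linear map $f$", so surjectivity is not assumed in advance. Exactness at $L$ supplies it in one direction, and the definition of a quotient supplies it in the other. The only real obstacle is bookkeeping the identification $J^{-1}(\operatorname{Ann}K)=K^{\perp}$ consistently in both the symmetric and the skew case, and this works the same way in each because only vanishing of the pairing matters. Since $\operatorname{char}\kk\neq 2$, there is no subtlety about isotropy versus alternating forms. I would also note that the argument works verbatim in families, i.e. for vector bundles with a nondegenerate form, which is presumably how the lemma is used later.
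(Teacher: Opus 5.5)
Your argument is correct and complete. Exactness at the two ends is equivalent to surjectivity of $f$. The identification $\im(J^{-1}\circ f^{\vee}) = J^{-1}(\operatorname{Ann}(\Ker f)) = (\Ker f)^{\perp}$ turns exactness in the middle into $\Ker f = (\Ker f)^{\perp}$. The identity $\dim K + \dim K^{\perp} = 2n$ then matches this with the definition ($\dim L = n$ together with isotropy of $\Ker f$).

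The paper does not prove this lemma; it only cites it as well known (\cite[\S 2]{Hitching-2007}). So there is no paper proof to compare against, and your argument is the standard verification the citation stands in for.

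Your closing remark about families also holds, provided $f$ is a surjection of vector bundles so that $\Ker f$ is a subbundle. That is the setting in which the paper uses the lemma, by pulling back the universal sequence to $\PP^{1}$.
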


Let $U$ be an $n$-dimensional vector space and let 
\[
J_{\pm}: U\oplus U^{\vee} \to U^{\vee}\oplus U
\]
where 
\[
J_{\pm} = \begin{pmatrix}
        0 & \pm 1_{U^{\vee }}\\
        1_{U} & 0
      \end{pmatrix}.
\]
Then $J_{\pm}$ induces an orthogonal/symplectic form on $U\oplus
U^{\vee}$ and so by the lemma, $\Xpm$ parameterizes short exact
sequences of the form
\[
\begin{tikzcd}[column sep={6em,between origins}]
0 \arrow[r] &
L^{\vee } \arrow[r, "J_{\pm }^{-1}\circ f^{\vee}"] &
U\oplus U^{\vee } \arrow[r, "f"] &
L \arrow[r] &
0.
\end{tikzcd}
\]

The space $\Xpm$ has a natural base point given by the sequence
\[
0\to U^{\vee}\to U\oplus U^{\vee} \to U\to 0
\]
where the maps are the natural inclusions and projections.

With the above discussion, we see that the algebraic double loop space
$\LoopTwo(\Xpm )$ parameterizes short exact sequences of
bundles on $\PP^{1}$ of the form
\begin{equation}\label{eqn: Evee-->U+Uvee-->E}
\begin{tikzcd}[column sep={6em,between origins}]
0 \arrow[r] &
E^{\vee } \arrow[r, "J_{\pm }^{-1}\circ \UL{f}^{\vee}"] &
\UL{U}\oplus \UL{U}^{\vee } \arrow[r, "\UL{f}"] &
E \arrow[r] &
0
\end{tikzcd}
\end{equation}
where the bundle $E$ has rank $n$ and degree $d$, and the map $\UL{f}$
restricted to $p_{\infty} = (1:0)$ is 0 on the $U^{\vee}$ factor. In
keeping with the notation of Section~\ref{sec: Zariski framed sheaves,
quivers, and proof of Loop2(Gr) theorem} we may then write
\[
\UL{f} = (\UL{\theta},\UL{x}_{1}\UL{\gamma})
\]
where $\UL{\theta}:\UL{U}\to E$ and $\UL{\gamma}:\UL{U}^{\vee}\to
E(-p_{\infty}).$ We then have
\[
J_{\pm}^{-1}\circ \UL{f}^{\vee} = \begin{pmatrix} \UL{\gamma}^{\vee}\UL{x}_{1} \\
\pm \UL{\theta}^{\vee}  \end{pmatrix}
\]
and we may re-express the condition $\UL{f}\circ J_{\pm}^{-1}\circ
\UL{f}^{\vee}=0$ as

\begin{equation}\label{eqn: theta gammadual x1 = mp x1 gamma thetadual}
\UL{\theta}\, \UL{\gamma}^{\vee}\, \UL{x}_{1} =  \mp \UL{x}_{1}\, 
\UL{\gamma}\,  \UL{\theta}^{\vee }.
\end{equation}

The above discussion has then proved the following intermediate result:

\begin{proposition}\label{prop: Loop2(Xpm) is a closed set in
Loop(Gr(n,2n)) satisfying condition involving theta and gamma}
$\LoopTwo (\Xpm )$ is the closed subset of $\LoopTwo (\Gr(n,2n))\cong
V^{\open}_{n,n,d}/GL(A)$ given by Equation~\eqref{eqn: theta gammadual x1
= mp x1 gamma thetadual}. 
\end{proposition}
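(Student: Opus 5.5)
The proposition asserts that, inside the quiver model of Proposition~\ref{prop: quiver description of Loop2(Gr(n,n+N)) with explicit stability condition} (with $W=U^{\vee}$, $N=n$), the based maps to $\Xpm$ are exactly those satisfying \eqref{eqn: theta gammadual x1 = mp x1 gamma thetadual}, and that this locus is closed. I will prove it in two steps, both functorial in families over a base $S$, so that we obtain an equality of subfunctors and not merely of point sets.

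\textbf{Step 1: reduction to a condition on $\UL{f}$.} Since $\Xpm\subset \Gr(n,U\oplus U^{\vee})$ is a closed subvariety containing the base point, a based map $\PP^{1}\to\Xpm$ is the same as a based map to $\Gr(n,2n)$ landing in $\Xpm$. So $\LoopTwo(\Xpm)$ is the closed subscheme of $\LoopTwo(\Gr(n,2n))$ of maps factoring through $\Xpm$. The degree also matches: the Plücker degree on $\Gr$ restricts to the generator on $\Xpm$. For $\Xpm=\OGr$ this holds up to the factor-of-two convention, and I will note the convention; the later remark that $d$ is even is consistent with using the $\Gr$-degree.

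By Lemma~\ref{lem: lagrangian quotient condition as a ses}, applied fiberwise over $\PP^{1}\times S$, a quotient $\UL{f}:\UL{U}\oplus\UL{U}^{\vee}\to E$ lands in $\Xpm$ if and only if $\UL{f}\circ J_{\pm}^{-1}\circ\UL{f}^{\vee}=0$. One direction is clear: the lemma's sequence is a complex. For the converse, the composite being zero says the kernel is isotropic, and a rank-count shows it is then maximal isotropic. This vanishing is a closed condition, since it says a section of $\mathcal{H}om(E^{\vee},E)$ is zero.

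\textbf{Step 2: rewriting the condition and checking closedness.} Substituting $\UL{f}=(\UL{\theta},\UL{x}_{1}\UL{\gamma})$ and computing $J_{\pm}^{-1}\UL{f}^{\vee}$ as displayed before the proposition, the condition becomes
\[
\UL{\theta}\,\UL{\gamma}^{\vee}\UL{x}_{1}\pm \UL{x}_{1}\UL{\gamma}\,\UL{\theta}^{\vee}=0,
\]
which is \eqref{eqn: theta gammadual x1 = mp x1 gamma thetadual}. The only care needed here is in the signs and transposes coming from $J_{\pm}^{-1}=\left(\begin{smallmatrix}0&1\\ \pm1&0\end{smallmatrix}\right)$, and this is a routine matrix computation.

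It then remains to check that the condition cuts out a closed subset of $V^{\open}_{n,n,d}/GL(A)$. This is the one point needing care: the equation is a priori a statement about sheaf maps, so it must be expressed through the quiver data. I would do this by viewing both sides as maps $E^{\vee}\to E$, i.e.\ sections of $E\otimes E(1)\otimes\OO(-1)$-type bundles in families. Vanishing of a morphism between vector bundles on $\PP^{1}\times S$ is closed in $S$ by properness of $\PP^{1}$ (semicontinuity, or the fact that the zero locus of a section pushes forward to a closed set). Since $\LoopTwo(\Gr)$ carries the universal family given by \eqref{eqn: A(-1)->A+U->E}, this gives a closed subscheme.

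I expect the main obstacle to be bookkeeping rather than anything conceptual: keeping the duals, signs, and the factorization through $\UL{x}_{1}$ consistent, and confirming the degree convention for $\OGr$.
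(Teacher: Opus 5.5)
Your proposal is correct and is essentially the paper's argument: pull back the characterization of Lemma~\ref{lem: lagrangian quotient condition as a ses} along the based map, substitute $\UL{f}=(\UL{\theta},\UL{x}_{1}\UL{\gamma})$ and $J_{\pm}^{-1}\UL{f}^{\vee}=(\UL{\gamma}^{\vee}\UL{x}_{1},\pm\UL{\theta}^{\vee})^{T}$, and read off \eqref{eqn: theta gammadual x1 = mp x1 gamma thetadual}. Closedness, which the paper takes as evident, already follows from $\Xpm\subset\Gr(n,2n)$ being closed; your extra justification has one slip, since the locus of $s\in S$ over which a bundle map vanishes identically on $\PP^{1}\times\{s\}$ is closed because its complement is the image of the open non-vanishing locus under the flat (hence open) projection, or scheme-theoretically via $\pi_{*}$ and base change, and not because the zero locus pushes forward to a closed set, which only records vanishing at some point of the fibre.
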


Our strategy to prove Theorems \ref{thm: quiver description of
Loop2Alg(LGr)} and \ref{thm: quiver description of Loop2Alg(OGr)} is
to translate the sheaf theoretic condition given by
Equation~\eqref{eqn: theta gammadual x1 = mp x1 gamma thetadual} into
the language of the quiver data $(A,\alpha ,j,\gamma )$. This
translation will involve constructing a non-degenerate bilinear form
$\phi :A\to A^{\vee}$ such that the closed condition is equivalent to
the following equations
\begin{align}\label{eqns: closed conditions}
\phi^{\vee}&= \mp \phi \nonumber \\ 
\phi \alpha &=\alpha^{\vee}\phi \\
\phi \gamma &= - j^{\vee}. \nonumber 
\end{align}
This will allow us to eliminate $\gamma$ from the quiver data and
reduce the structure group from $GL(A)$ to $GL(A,\phi )$.

We begin by defining $\phi :A\to A^{\vee}$. Recall that $A =
H^{0}(E(-1))$. Tensoring the sequence \eqref{eqn:
Evee-->U+Uvee-->E} by $\OO (-1)$ we get
\[
0\to E^{\vee}(-1)\to (\UL{U}\oplus \UL{U}^{\vee})(-1)\to E(-1)\to 0.
\]
Since $\OO (-1)$ and hence $(\UL{U}\oplus \UL{U}^{\vee})(-1)$ has $H^0=H^1=0$, the long exact
sequence associated to the above short exact sequence gives an
 isomorphism $H^{0}(E(-1))\overset \sim \longrightarrow H^{1}(E^{\vee}(-1))$. We define
$\phi$ to be the composition of this isomorphism with the Serre
duality isomorphism: 
\[
\phi : A = H^{0}(E(-1))\to H^{1}(E^{\vee}(-1)) \to H^{0}(E(-1))^{\vee}=A^{\vee}.
\]

The following seems almost a miracle.

\begin{lemma}\label{lem: phivee = mp phi}
The  isomorphism $\phi: A \rightarrow A^\vee$ satisfies $\phi^{\vee} = \mp \phi$. 
\end{lemma}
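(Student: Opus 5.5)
We reformulate $\phi$ as a bilinear pairing on $A=H^{0}(E(-1))$ and show directly that this pairing is $\mp$-symmetric. Write $\iota=J_{\pm}^{-1}\circ \UL{f}^{\vee}:E^{\vee}\to \UL{U}\oplus\UL{U}^{\vee}$ for the first map in \eqref{eqn: Evee-->U+Uvee-->E}. Let $\delta:H^{0}(E(-1))\to H^{1}(E^{\vee}(-1))$ be the connecting map, and let $\langle\ ,\ \rangle$ be the Serre duality pairing $H^{0}(E(-1))\otimes H^{1}(E^{\vee}(-1))\to H^{1}(\OO(-2))\cong\kk$. Then
\[
\phi(a)(b)=\langle b,\delta a\rangle ,
\]
and the lemma is the statement $\langle b,\delta a\rangle=\mp\langle a,\delta b\rangle$ for all $a,b\in A$.

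\textbf{Step 1: compute $\langle b,\delta a\rangle$ in \v{C}ech terms.} We use the cover $\PP^{1}=U_{0}\cup U_{1}$ by the two standard affines.
\begin{itemize}
\item Lift $a$ on each $U_{i}$ to a section $\tilde a_{i}$ of $(\UL{U}\oplus\UL{U}^{\vee})(-1)$.
\item Then $\delta a$ is represented by the cocycle $\iota^{-1}(\tilde a_{0}-\tilde a_{1})$ on $U_{0}\cap U_{1}$.
\item Pairing with $b$ gives $b\cdot\iota^{-1}(\tilde a_{0}-\tilde a_{1})$. This is a \v{C}ech $1$-cocycle of $\OO(-2)$, and its residue class is $\langle b,\delta a\rangle$.
\end{itemize}

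\textbf{Step 2: rewrite this using the form.} For a section $s$ of $E^{\vee}$ and a section $v$ of $\UL{U}\oplus\UL{U}^{\vee}$ we have $\omega(\iota s,v)=s(\UL{f}v)$, where $\omega$ is the form defined by $J_{\pm}$. Hence
\[
b\cdot\iota^{-1}(\tilde a_{0}-\tilde a_{1})=\pm\,\omega(\tilde a_{0}-\tilde a_{1},\tilde b_{i}),
\]
with the sign coming from the symmetry of $\omega$; the precise sign is to be tracked carefully. Here $\tilde b_{i}$ is either local lift of $b$. Such lifts exist locally because $\UL{f}$ is surjective, and the answer does not depend on the choice because $\omega$ vanishes on $\iota(E^{\vee})\times\iota(E^{\vee})$ (the isotropy condition). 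This gives the symmetric-looking \v{C}ech expression
\[
\langle b,\delta a\rangle=\mathrm{Res}\,\omega(\tilde a_{0}-\tilde a_{1},\tilde b_{0}),
\]
and swapping the roles of $a$ and $b$ gives $\langle a,\delta b\rangle=\mathrm{Res}\,\omega(\tilde b_{0}-\tilde b_{1},\tilde a_{0})$.

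\textbf{Step 3: compare the two expressions.} The terms $\omega(\tilde a_{i},\tilde b_{i})$ are sections of $\OO(-2)$ over $U_{i}$, so their residues vanish; that is, they are \v{C}ech coboundaries. Expanding,
\[
\omega(\tilde a_{0}-\tilde a_{1},\tilde b_{0})
=\omega(\tilde a_{0},\tilde b_{0})-\omega(\tilde a_{1},\tilde b_{1})-\omega(\tilde a_{1},\tilde b_{0}-\tilde b_{1}).
\]
The first two terms are coboundaries. For the last, we use $\omega(x,y)=\pm\omega(y,x)$ and move $\tilde a_{1}$ to $\tilde a_{0}$ modulo the isotropic part, since $\tilde b_{0}-\tilde b_{1}\in\iota(E^{\vee})$. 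This turns the expression into $\pm\langle a,\delta b\rangle$, up to a sign. That sign combines three contributions: the antisymmetry of the \v{C}ech boundary, which contributes $-1$; the symmetry $\pm1$ of $\omega$; and the sign conventions of Serre duality. Together these should produce $\mp$.

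The main obstacle is this sign bookkeeping. To make sure the sign is right, I would check a base case afterwards: take $n=1$, $d=1$ and an explicit degree-one based map, compute $A$ and $\phi$ by hand, and confirm $\phi^{\vee}=-\phi$ for the orthogonal case $X_{+,1}$ and $\phi^{\vee}=+\phi$ for the Lagrangian case $X_{-,1}$.

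A more conceptual alternative to Steps 1–3 is to note that \eqref{eqn: Evee-->U+Uvee-->E} is self-dual up to the sign $\pm$ via $J_{\pm}$. Connecting maps of a dual extension are dual to each other up to the sign $-1$ coming from Serre duality on $H^{0}$ versus $H^{1}$, which again gives $\phi^{\vee}=-(\pm)\phi=\mp\phi$.
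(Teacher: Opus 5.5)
\textbf{There is a genuine gap, though a small one.} Your \v{C}ech set-up is sound and takes a genuinely different route from the paper's. As written, however, it does not prove the lemma: the lemma's entire content is the sign, and you stop at ``together these should produce $\mp$''.

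Your list of sign sources is also off, so the planned bookkeeping would not reliably land on the right answer. Conventions for Serre duality, and for the order of the \v{C}ech difference defining $\delta$, cannot contribute. They enter $\langle b,\delta a\rangle$ and $\langle a,\delta b\rangle$ in exactly the same way, and whether $\phi$ is symmetric or skew is unchanged under $\phi\mapsto-\phi$. Likewise, with $\omega(x,y)=\langle J_{\pm}x,y\rangle$ your Step~2 identity holds with no sign at all, since $\omega(\iota c,v)=c(\UL{f}v)$.

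Your proposed sanity check is partly unavailable. $\OGr(1,2)$ is two points, so there are no degree-$1$ based maps to $X_{+,1}$; this is consistent with the paper's remark that $d$ is even in the orthogonal case. For $X_{-,1}$ with $d=1$, every $1\times 1$ form is symmetric, so that example detects the sign only through nondegeneracy of $\phi$.

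\textbf{How to close it.} The missing step is one line from your own expansion. Write $\omega(x,y)=s\,\omega(y,x)$ with $s=\pm1$ for $J_{\pm}$. Modulo coboundaries,
\[
\omega(\tilde a_{0}-\tilde a_{1},\tilde b_{0})\equiv-\omega(\tilde a_{1},\tilde b_{0}-\tilde b_{1})=-\omega(\tilde a_{0},\tilde b_{0}-\tilde b_{1})=-s\,\omega(\tilde b_{0}-\tilde b_{1},\tilde a_{0}).
\]
The middle equality is isotropy: both $\tilde a_{0}-\tilde a_{1}$ and $\tilde b_{0}-\tilde b_{1}$ lie in $\iota(E^{\vee}(-1))$, and $\omega(\iota x,\iota y)=x(\UL{f}\iota y)=0$. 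Taking residues gives $\langle b,\delta a\rangle=-s\,\langle a,\delta b\rangle$, i.e.\ $\phi(a)(b)=-s\,\phi(b)(a)$. Hence $\phi^{\vee}=-s\,\phi=\mp\phi$.

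So only two signs occur. One is $s$. The other is a $-1$ from moving the \v{C}ech difference from the $a$-slot to the $b$-slot. This $-1$ plays the role of the sign the paper introduces when it converts the short exact sequence into a quasi-isomorphism $\Psi:\DD F^{\bullet}\to F^{\bullet}$, with $F^{\bullet}=[\UL{U}(-1)\to E(-1)]$.

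\textbf{Comparison with the paper.} With this line added, your argument is a complete proof and more elementary than the paper's. The paper instead reads off $\DD\Psi=\mp\Psi$ by rotating the square, then applies $\Rpistar$ together with Grothendieck--Verdier duality. Its formulation has the advantage of producing $\Phi$ and $\UL{\phi}$, which it reuses to prove $\phi\alpha=\alpha^{\vee}\phi$ and $\phi\gamma=-j^{\vee}$.

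Your ``conceptual alternative'' is essentially the paper's argument. There, though, the $-1$ compatibility of Serre duality with connecting maps that you assert is exactly what needs proof.
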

\begin{proof}
Grothendieck-Verdier duality \cite[Eq~3.20]{Huybrechts-FMbook} states
that if $\pi :X\to Y$ is a morphism of smooth schemes and the
dualizing functor on $D^{b}(\operatorname{Coh}(X))$ is defined by 
\[
\DD_{X}(F^{\bullet}) = (F^{\bullet})^{\vee} \otimes \omega_{X}[\dim X ]
\]
then
\[
\Rpistar \circ \DD_{X} \homotopyeq  \DD_{Y}\circ \Rpistar .
\]

We let $Y=pt$ and $X=\PP^{1}$ and let 
\[
F^{\bullet} = [\UL{U}(-1)\stackrel{\UL{\theta}}{\longrightarrow } E(-1)]
\]
where this two term complex is supported in degrees $-1$ and $0$. Then
\[
\DD_{\PP^{1}}F^{\bullet} = [E^{\vee}(-1)\stackrel{\UL{\theta}^{\vee
}}{\longrightarrow } \UL{U}^{\vee}(-1)] .
\]

Note that $\Rpistar F^{\bullet}=A$ a vector space considered as a
complex supported in degree 0.

We will repeatedly use the following general fact from homological
algebra:
\bigskip

\begin{quote}
\emph{A short exact sequence with a split middle term is equivalent to
a quasi-isomorphism of a pair of two term complexes.}  (A short
exact sequence with a split middle term is the total complex of a 2x2
double complex and since the total complex is acyclic, the spectral
sequence of the double complex implies the vertical maps induce a
quasi-isomorphism of the rows.)
\end{quote}
\bigskip

So for example, the short exact sequence

\[
\begin{tikzcd}[column sep=6em]
0 \arrow[r]
& E^{\vee}
  \arrow[r,"{%
    \begin{pmatrix}
      \UL{\gamma}^{\vee}\UL{x}_{1} \\[2pt]
      \pm\,\UL{\theta}^{\vee}
    \end{pmatrix}
  }"]
& \UL{U}\oplus \UL{U}^{\vee}
  \arrow[r,"{(\:\UL{\theta}\,,\,\UL{x}_{1}\UL{\gamma}\:)}"]
& E \arrow[r]
& 0
\end{tikzcd}
\]

is equivalent to the following quasi-isomorphism of two-term complexes:

\[
\begin{tikzcd}[column sep=6em,row sep=4em]
  \UL{U}(-1)
    \arrow[r,"\UL{\theta }"]
  & E(-1)
  & F^{\bullet}
    \arrow[d,bend left=25,"\Phi=\Psi^{-1}   "]\\
  E^{\vee}(-1)
    \arrow[u,"\mp \UL{\gamma}^{\vee}\UL{x}_{1}"]
    \arrow[r,"\UL{\theta }^{\vee}"]
  & \UL{U}^{\vee }(-1)
    \arrow[u,"\UL{x}_{1}\UL{\gamma }"]
  & \DD F^{\bullet}
    \arrow[u,"\Psi  "]
\end{tikzcd}
\]

The map of complexes on the left above is a quasi-isomorphism and
hence induces an isomorphism $\Psi :\DD F^{\bullet}\to F^{\bullet}$
in the derived category whose inverse we denote $\Phi$.

We remark that going from a short exact sequence to a map of two term
complexes introduces a sign which in the above is manifested by the
$\mp$ appearing on the left vertical arrow (as opposed to the $\pm$
appearing on the $\UL{\theta}^{\vee}$ in the sequence). This crucial
sign change is the origin of the form $\phi$ having the opposite sign
as $J_{\pm}$.

We apply $\DD (-)$ to the complex to get

\[
\begin{tikzcd}[column sep=6em,row sep=4em]
  \UL{U}^{\vee }(-1)
    \arrow[d,"\mp\UL{x}_{1} \UL{\gamma}"]
  & E^{\vee}(-1)
    \arrow[d,"\UL{\gamma }^{\vee } \UL{x}_{1}"]
    \arrow[l,"\UL{\theta }^{\vee}"]
  & \DD F^{\bullet}
    \arrow[d,"\DD \Psi  "]
  \\
  E(-1)
  & \UL{U}(-1)
    \arrow[l,"\UL{\theta }"]
  & F^{\bullet}
\end{tikzcd}
\]
Rotating the diagram on the left by 180 degrees we see that $\DD \Psi
=\mp \Psi$ and consequently we have $\DD \Phi =\mp \Phi$. Applying
$\Rpistar $ to $\Phi$ we get
\[
\Rpistar \Phi : \Rpistar F^{\bullet} = A \to \Rpistar
\DD_{\PP^{1}}F^{\bullet}=\DD_{pt}\Rpistar F^{\bullet} = A^{\vee}.
\]
Unpacking the definitions we see that $\phi  = \Rpistar \Phi$ and so
\begin{align*}
\phi &=\Rpistar \Phi \\
&=\mp \Rpistar \DD_{\PP^{1}}\Phi \\
&=\mp \DD_{pt}\Rpistar \Phi \\
&=\mp \DD_{pt}\phi \\
&=\mp \phi^{\vee}
\end{align*}
which completes the proof of the Lemma.
\end{proof}

\begin{lemma}\label{lem: phi.alpha=alphavee.phi} The following
equation holds:
\[
\phi \alpha =\alpha^{\vee}\phi .
\]
\end{lemma}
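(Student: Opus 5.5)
The plan is to interpret $\alpha$ as the map induced on $\Rpistar$ by a sheaf map, and to show that $\phi$ intertwines it with its dual because $\phi$ is built functorially from the quasi-isomorphism $\Phi$. Recall that $\alpha$ is the $A$-component of $x_0\colon H^0(E(-1))\to H^0(E)=A\oplus U$. The first step is to realize $\alpha$ as $\Rpistar$ of a morphism of two-term complexes. Multiplication by $\UL{x}_0$ gives a map
\[
\UL{x}_0\colon F^{\bullet}(-1)=[\UL{U}(-2)\to E(-2)]\longrightarrow F^{\bullet}.
\]
This is awkward, since $\Rpistar F^{\bullet}(-1)$ is not $A$. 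Instead we use the identification $H^0(E(-1))\cong H^0(Q)$. The complex $F^{\bullet}=[\UL{U}(-1)\to E(-1)]$ is quasi-isomorphic to $Q(-1)\cong Q$, because $\UL{\theta}$ is injective with cokernel $Q$, a torsion sheaf supported on $\AA^1$. So $F^{\bullet}\simeq Q$, and $\Rpistar F^{\bullet}=H^0(Q)=A$. Under this identification I would check that $\alpha$ is the endomorphism of $H^0(Q)$ given by multiplication by the affine coordinate $t=x_0/x_1$, which is regular on the support of $Q$. This follows from the exact sequence \eqref{eqn: A(-1)->A+U->E}: on $\AA^1$ it presents $Q$ as $\operatorname{coker}(\alpha - t)$ acting on $A\otimes\kk[t]$, modulo the image of $U$. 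Hence $H^0(Q)=A$ with $t$ acting as $\alpha$.

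The second step uses that $t$ acts on $F^{\bullet}$ as an honest endomorphism in the derived category. On $\AA^1$ it is multiplication by a function, and the complex is supported (cohomologically) on $\AA^1$. Concretely, one can replace $F^{\bullet}$ by its restriction to $\AA^1$, or by $Q$ itself. Likewise $\DD F^{\bullet}\simeq \mathcal{E}xt^1(Q,\omega)$ up to twist is a torsion sheaf on which $t$ acts. Three facts are then needed:
\begin{enumerate}
\item $\DD(t\cdot)=t\cdot$ on $\DD F^{\bullet}$, since duality is $\mathcal{O}$-linear.
\item $\Phi\colon F^{\bullet}\to \DD F^{\bullet}$ commutes with $t$, since $\Phi$ is a morphism of $\mathcal{O}$-modules (complexes of sheaves) and $t$ is central.
\item Grothendieck--Verdier duality is compatible with the $\mathcal{O}$-module action. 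That is, $\Rpistar(\DD(t))=(\Rpistar t)^{\vee}$ under $\Rpistar\DD F^{\bullet}\cong \DD_{pt}\Rpistar F^{\bullet}=A^{\vee}$.
\end{enumerate}
Applying $\Rpistar$ to $\Phi\circ t=t\circ\Phi$ then gives $\phi\alpha=\alpha^{\vee}\phi$.

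The main obstacle is fact (3) together with the identification in step one: making rigorous that "multiplication by $t$" is a legitimate endomorphism of $F^{\bullet}$ in $D^b(\PP^1)$, even though $t$ has a pole at $p_\infty$. I would handle this by working with $Q$, where $t$ is a genuine endomorphism because $Q$ is supported away from $p_\infty$. Verdier duality for the finite morphism $\operatorname{Supp}Q\to pt$ is naturally linear over $H^0(\mathcal{O}_{\operatorname{Supp}Q})$, so the trace pairing satisfies $\langle ts,s'\rangle=\langle s,ts'\rangle$. The Serre-duality isomorphism $\DD_{pt}\Rpistar\cong\Rpistar\DD$ is then $\Gamma(\mathcal{O})$-linear, which is exactly the adjointness needed. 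If this becomes too slippery, a fallback is a direct check using the concrete model for $\phi$. One would realize $\phi$ via the connecting map $H^0(E(-1))\to H^1(E^{\vee}(-1))$ and Serre duality, and verify the symmetry of $\alpha$ by evaluating on \v{C}ech cocycles with $x_0$ multiplication.
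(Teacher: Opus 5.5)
Your proposal is correct and is essentially the paper's argument. The paper likewise passes to the torsion sheaf $Q$ on $\operatorname{Spec}\kk[z]$ and views the transported isomorphism $\UL{\phi}\colon Q\to Q^{\vee}$ as a $\kk[z]$-module map, with $z$ acting by $\alpha$ on $A$ and by $\alpha^{\vee}$ on $A^{\vee}$, which gives $\phi\alpha=\alpha^{\vee}\phi$ after applying $\Rpistar$. The only difference is packaging: where you invoke $\mathcal{O}$-linearity of Grothendieck--Verdier duality (your fact (3)), the paper replaces $F^{\bullet}$ via $\JJ$ by the resolution $A^{\bullet}=[\UL{A}(-2)\xrightarrow{\UL{\sigma}}\UL{A}(-1)]$, so that $\DD A^{\bullet}$ is built from $\UL{\sigma}^{\vee}$ and $z$ visibly acts on its cokernel by $\alpha^{\vee}$.
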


\proof We let

\[
\UL{\sigma} = \UL{x}_{1}\alpha -\UL{x}_{0}
\]
and we twist the exact sequence \eqref{eqn: A(-1)->A+U->E} by $\OO
(-1)$ to obtain:

\[
\begin{tikzcd}[column sep=3em]
  0
    \arrow[r]
  & \UL{A}(-2)
    \arrow[r,"{%
      \begin{pmatrix}
        \UL{\sigma } \\
        \UL{x}_{1}j
      \end{pmatrix}
    }"]
  & (\UL{A}\oplus \UL{U})(-1)
    \arrow[r,"{(\UL{\beta},\UL{\theta})}"]
  & E(-1)
    \arrow[r]
  & 0.
\end{tikzcd}
\]

We now use the same homological algebra trick as in the proof of
Lemma~\ref{lem: phivee = mp phi} applied to the above to obtain:

\[
\begin{tikzcd}[column sep=6em,row sep=4em]
  \UL{A}(-2)
    \arrow[d,"-\UL{x}_{1}j"]
    \arrow[r,"\UL{\sigma  }"]
  & \UL{A}(-1)
    \arrow[d,"\UL{\beta }"]
  & A^{\bullet }
    \arrow[d,"\JJ "]
  \\
  \UL{U}(-1)
    \arrow[r,"\UL{\theta }"]
  & E(-1)
  & F^{\bullet}
\end{tikzcd}
\]

We compose the derived category isomorphism $\JJ$ with the
isomorphisms $\Phi$ and $\DD \JJ$ to get an isomorphism
\[
\DD \JJ \circ \Phi \circ \JJ :A^{\bullet}\longrightarrow \DD A^{\bullet }
\]
which is realized by the following diagram of quasi-isomorphisms:

\begin{equation}\label{eqn: big diagram}
\begin{tikzcd}[column sep=6em,row sep=4em]
  \UL{A}(-2)
    \arrow[d,"-\UL{x}_{1}j"]
    \arrow[r,"\UL{\sigma  }"]
  & \UL{A}(-1)
    \arrow[d,"\UL{\beta }"]
  & A^{\bullet }
    \arrow[d,"\JJ "]
  \\
  \UL{U}(-1)
    \arrow[r,"\UL{\theta }"]
  & E(-1)
  & F^{\bullet}
    \arrow[d,"\Phi"]\\
  E^{\vee}(-1)
    \arrow[d,"\UL{\beta}^{\vee}"]
    \arrow[u,"\mp \UL{\gamma}^{\vee}\UL{x}_{1}"]
    \arrow[r,"\UL{\theta }^{\vee}"]
  & \UL{U}^{\vee }(-1)
    \arrow[d,"-\UL{x}_{1}j^{\vee}"]
    \arrow[u,"\UL{x}_{1}\UL{\gamma }"]
  & \DD F^{\bullet}
    \arrow[d,"\DD \JJ "]\\
    \UL{A}^{\vee}(-1) 
    \arrow[r,"\UL{\sigma  }^{\vee}"]
  & \UL{A}^{\vee }
  & \DD A^{\bullet} 
\end{tikzcd}
\end{equation}

Recall that 
\[
\coker \UL{\theta} = Q
\]
is a zero-dimensional sheaf supported on
$\PP^{1}-p_{\infty}=\operatorname{Spec}\kk [z]$ where
$z=x_{0}/x_{1}$. Considering the sheaf $Q$ as an object in
$D^{b}(\PP^{1})$ (concentrated in degree 0), we see that 
\[
F^{\bullet}\cong Q
\]
and consequently via $\JJ$ we also have 
\[
A^{\bullet}\cong Q.
\]
Let $\UL{\phi}=\DD \JJ \circ \Phi \circ \JJ$ and by the above diagrams
we see
\[
\UL{\phi}: Q\longrightarrow Q^{\vee}
\]
is an isomorphism. As the notation suggests, $\Rpistar \UL{\phi}=\phi$
(since $\Rpistar \JJ =1_{A}$).

Now since $Q$ and $Q^{\vee}$ are sheaves on $\operatorname{Spec}\kk
[z]$, we may regard the isomorphism $\UL{\phi }$ as a $\kk [z]$-module
isomorphism.

We note that the underlying vector space of the $\kk [z]$-module $Q$ is
$A$ and since $\UL{x}_{1}\alpha -\UL{x}_{0}=0$ on $\coker
\UL{\sigma}=Q$, we see that multiplication by $z=x_{0}/x_{1}$ is given
by $\alpha :A\to A$. Similarly, the underlying vector space of the
module $Q^{\vee}$ is $A^{\vee}$ and multiplication by $z$ is given by
$\alpha^{\vee}:A^{\vee}\to A^{\vee}$.

Since $\UL{\phi}:Q\to Q^{\vee}$ is a module isomorphism, the
underlying linear map of vector spaces $\phi :A\to A^{\vee}$ commutes
with multiplication by $z$, namely
\[
\phi \alpha =\alpha^{\vee}\phi 
\]
which proves the lemma.\qed 

\begin{lemma}\label{lem: phi.gamma = -jvee}
The following holds
\[
\phi \gamma =-j^{\vee}.
\]
\end{lemma}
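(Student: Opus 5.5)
The plan is to compute $\phi\gamma$ directly from the definition of $\phi$: the connecting isomorphism $\delta\colon H^{0}(E(-1))\to H^{1}(E^{\vee}(-1))$ followed by Serre duality. I would evaluate it on a vector of the form $\gamma(w)$, $w\in U^{\vee}$, and pair the result with an arbitrary $a\in A=H^{0}(E(-1))$. The identity to prove is
\[
\langle \phi\gamma(w),a\rangle = -\,w(j(a)).
\]

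I will use the \v{C}ech cover $\mathcal{U}_{0}=\{x_{0}\neq 0\}\ni p_{\infty}$, $\mathcal{U}_{1}=\{x_{1}\neq 0\}=\operatorname{Spec}\kk[z]$, and carry out the following steps.

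\emph{Step 1.} Lift $\gamma(w)$, viewed as a section of $E(-1)$, to local sections of $(\UL{U}\oplus\UL{U}^{\vee})(-1)$ under $\UL{f}=(\UL{\theta},\UL{x}_{1}\UL{\gamma})$.
\begin{itemize}
\item On $\mathcal{U}_{1}$ the natural lift is $(0,w/x_{1})$.
\item On $\mathcal{U}_{0}$, a neighbourhood of $p_{\infty}$, I would lift using the Zariski framing, i.e.\ through $\UL{\theta}$. Since $\gamma(w)$ vanishes at $p_{\infty}$ to the appropriate order, the natural lift there is $(u,0)$ with $u$ a section of $\UL{U}(-1)$ on $\mathcal{U}_{0}$.
\end{itemize}

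\emph{Step 2.} The difference of the two lifts on $\mathcal{U}_{0}\cap\mathcal{U}_{1}$ lies in the image of $J_{\pm}^{-1}\UL{f}^{\vee}$. It therefore gives a \v{C}ech $1$-cocycle representing $\delta(\gamma(w))\in H^{1}(E^{\vee}(-1))$.

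\emph{Step 3.} Pair this cocycle with $a$ and take the residue at $p_{\infty}$. The pairing of $J_{\pm}^{-1}\UL{f}^{\vee}(\xi)$ with $a$ equals the pairing of $\xi$ with $\UL{f}(\text{lift of }a)$. Using this, the residue should reduce to the residue at $p_{\infty}$ of $\langle w/x_{1},\,\cdot\,\rangle$ evaluated against the $U$-component of $a$ near $p_{\infty}$.

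\emph{Step 4.} Identify that $U$-component with $j(a)$. By construction, $x_{0}a\in H^{0}(E)=A\oplus U$ has $U$-component $j(a)$, and this is exactly the value of $x_{0}a$ at $p_{\infty}$ read through the framing $\UL{\theta}|_{p_{\infty}}$. So the leading term of $a$ at infinity is $j(a)$, and the residue of $w/x_{1}$ against it (in the coordinate $x_{1}/x_{0}$) is $\pm w(j(a))$.

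An alternative route avoids \v{C}ech cochains. Take $R\pi_{*}$ of the diagram \eqref{eqn: big diagram} and precompose the right vertical column with the natural map relating $\UL{U}^{\vee}(-1)$ to $\DD F^{\bullet}$ and $\DD A^{\bullet}$, twisted appropriately. On hypercohomology this should exhibit $\phi\gamma$ as $R\pi_{*}$ of the bottom-right square, whose arrow is labelled $-\UL{x}_{1}j^{\vee}$. The explicit minus sign there strongly suggests that this is where the $-$ in the statement comes from. I would use this diagrammatic computation as a cross-check on the sign obtained from the \v{C}ech argument.

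The main obstacle is \textbf{sign bookkeeping}, with two separate issues. First, the local order of vanishing and the residue convention at $p_{\infty}$ must be handled carefully. Second, the sign convention in the connecting homomorphism and in Serre duality must be the one implicitly fixed by identifying $\phi$ with $R\pi_{*}\Phi$ in Lemma~\ref{lem: phivee = mp phi}. I would resolve this by fixing conventions so that they agree with the derived-category description already used for $\phi$ and for the diagram \eqref{eqn: big diagram}.
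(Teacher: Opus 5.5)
Up to sign, your computation is sound. Everything reduces to the residue at $p_\infty$ of $w(\underline{\theta}^{-1}a)/x_1$. Since $x_0a=x_1\alpha(a)+\underline{\theta}(j(a))$ in $H^0(E)$, near $p_\infty$ one has $\underline{\theta}^{-1}a=j(a)/x_0+(x_1/x_0)\,\underline{\theta}^{-1}\alpha(a)$, so the residue is $\pm w(j(a))$.

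Step~1, however, is wrong as stated. On $\{x_0\neq 0\}$ the cokernel of $\underline{\theta}(-1)$ is (a twist of) $Q$, supported at the eigenvalues $z=\lambda$ of $\alpha$. These points lie in $\{x_0\neq 0\}$ whenever $\lambda\neq 0$, and there the image of $\gamma(w)$ in $Q$ is typically nonzero. For example, take $n=d=1$ and $\alpha=\lambda\neq 0$: then $\gamma(w)$ is a nonzero constant section of $E(-1)=\mathcal{O}$, while $\underline{\theta}(-1)$ has image the ideal sheaf of $z=\lambda$. So no lift of the form $(u,0)$ exists on $\{x_0\neq 0\}$. The fix is easy: use the cover $\{\mathbb{P}^1\setminus\operatorname{Supp}Q,\ \mathbb{A}^1\}$, or take any lift on $\{x_0\neq0\}$ and observe that it does not change the residue at $p_\infty$.

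The genuine gap is the sign. It is the delicate content of the statement, and it is not routine bookkeeping. In Step~3 you use $\xi(a)=\langle J_\pm c,\tilde a\rangle$, where $c=J_\pm^{-1}\underline{f}^\vee(\xi)$ and $\tilde a$ is a lift of $a$. This brings in the form $J_\pm$, and $J_\pm(0,w/x_1)=(\pm w/x_1,0)$. So if you take $\phi$ literally as ``connecting map followed by Serre duality'' with case-independent conventions, your computation gives $\phi\gamma=\pm\epsilon\, j^\vee$ for a universal sign $\epsilon$. That means opposite signs in the orthogonal and symplectic cases, not the uniform $-j^\vee$ of the statement.

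The uniform sign refers to the normalization $\phi=R\pi_{*}\Phi$ from the proof of Lemma~\ref{lem: phivee = mp phi}. There $\Psi$ is $\underline{x}_1\underline{\gamma}$ on $\underline{U}^\vee(-1)$ and $\mp\underline{\gamma}^\vee\underline{x}_1$ on $E^\vee(-1)$. The short exact sequence has $\pm\underline{\theta}^\vee$ in its left map, whereas $\mathbb{D}F^\bullet$ has differential $\underline{\theta}^\vee$. Converting one into the other rescales $E^\vee(-1)$ by $\pm1$, so $R\pi_{*}\Phi$ is $\pm1$ times the \v{C}ech connecting map, up to a case-independent sign. To prove the statement you must build in this factor. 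You must then fix the remaining universal sign by matching your residue convention with the Grothendieck--Verdier identification $R\pi_{*}\mathbb{D}F^\bullet\cong A^\vee$. Your proposal does neither.

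The paper avoids cochains entirely:
\begin{enumerate}
\item it realizes $\underline{\phi}$ at chain level as $\phi\underline{x}_1\colon A^\bullet\to\mathbb{D}A^\bullet$;
\item it adds cokernels to \eqref{eqn: big diagram}, twists by $\mathcal{O}(1)$, and takes $H^0$ of each row;
\item exactness then forces $B_1=(1_A,0)$ and $B_3=(\alpha^\vee,1_{A^\vee})$;
\item computing $B_2$ through squares (2) and (3) gives $\phi\gamma=B_2=-j^\vee$.
\end{enumerate}
Your ``alternative route'' points in this direction but omits both key ingredients: the chain-level model of $\underline{\phi}$, and the exactness argument that pins down $B_1$ and $B_3$. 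As sketched, it does not determine the sign either.
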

\proof
First, we note that the isomorphism $\UL{\phi}:A^{\bullet}\to \DD
A^{\bullet}$ can be realized as the following map of complexes:

\[
\begin{tikzcd}[column sep=6em,row sep=4em]
  \UL{A}^{\vee }
    \arrow[r,"\UL{\sigma}^{\vee}"]
  & \UL{A}^{\vee}(1)
  & \DD A^{\bullet }
  \\
  \UL{A}(-1)
    \arrow[r,"\UL{\sigma  }"]
    \arrow[u,"\phi \UL{x}_{1}"]
  & \UL{A}
    \arrow[u,"\phi \UL{x}_{1}"]
  & A^{\bullet}
    \arrow[u,"\UL{\phi } "]
\end{tikzcd}
\]
since the induced sheaf isomorphism $\coker \UL{\sigma}\to \coker
\UL{\sigma}^{\vee}$ agrees with $\UL{\phi}$ on the underlying vector
spaces $\phi :A\to A^{\vee}$.

Now by adding the cokernels to the complexes in the diagram
\eqref{eqn: big diagram}, we get a commutative diagram where the rows
are short exact sequences. Then tensoring this big diagram by $\OO
(1)$ and adding at the bottom the above complex (and cokernel) we get the
following:

\begin{equation*}
\begin{tikzcd}[column sep=6em,row sep=4em]
  \UL{A}(-1)
    \arrow[d,"-\UL{x}_{1}j"]
    \arrow[r,"\UL{\sigma  }"]
  & \UL{A}
    \arrow[d,"\UL{\beta }"]
    \arrow[r]
  & \coker \UL{\sigma }
    \arrow[d]
  \\
  \UL{U}
    \arrow[r,"\UL{\theta }"]
  & E
    \arrow[r]
  & \coker \UL{\theta }
  \\
  E^{\vee}
    \arrow[d,"\UL{\beta}^{\vee}"]
    \arrow[u,"\mp \UL{\gamma}^{\vee}\UL{x}_{1}"]
    \arrow[r,"\UL{\theta }^{\vee}"]
  & \UL{U}^{\vee }
    \arrow[r]
    \arrow[d,"-\UL{x}_{1}j^{\vee}"]
    \arrow[u,"\UL{x}_{1}\UL{\gamma }"]
  & \coker \UL{\theta}^{\vee}
    \arrow[d]
    \arrow[u]
  \\
    \UL{A}^{\vee} 
    \arrow[r,"\UL{\sigma  }^{\vee}"]
  & \UL{A}^{\vee}(1)
    \arrow[r]
  & \coker \UL{\sigma}^{\vee}
  \\
  \UL{A}(-1)
    \arrow[u,"\phi \UL{x}_{1}"]
    \arrow[r,"\UL{\sigma  }"]
  & \UL{A}
    \arrow[u,"\phi \UL{x}_{1}"]
    \arrow[r]
  & \coker \UL{\sigma }
    \arrow[u]    
\end{tikzcd}
\end{equation*}

We now take the long exact sequence in cohomology associated to each
row to get the following commutative diagram:

\begin{equation*}
\begin{tikzcd}[column sep=5em,row sep=4em]
& 0
\arrow[r]
\arrow[d]
& A
\arrow[r,"1_{A}"]
\arrow[d,"\begin{pmatrix}1_{A}\\ 0   \end{pmatrix}"]
& A
\arrow[r]
\arrow[d,"1_{A}"]
& 0
\arrow[d]
& 
\\
0
\arrow[r]
& U
\arrow[r,"\begin{pmatrix}0\\ 1_{U}   \end{pmatrix} "]
& A\oplus U
\arrow[r,"B_{1}"]
\arrow[ur, phantom, "{\scriptstyle(1)}" description]
& A
\arrow[r]
& 0
& 
\\
0\arrow[r]
& H^{0}(E^{\vee})
\arrow[r]
\arrow[u]
\arrow[d]
& U^{\vee }
\arrow[ur, phantom, "{\scriptstyle(2)}" description]
\arrow[r,"B_{2}"]
\arrow[u,"\begin{pmatrix}\gamma \\ 0   \end{pmatrix}"]
\arrow[d,"\begin{pmatrix}0 \\ -j^{\vee }   \end{pmatrix}"]
& A^{\vee}
\arrow[r]
\arrow[u,"\phi^{-1}"]
\arrow[d,"1_{A^{\vee}}"]
& H^{1}(E^{\vee})
\arrow[r]
\arrow[u]
\arrow[d]
& 0
\\
0
\arrow[r]
& A^{\vee }
\arrow[r," \begin{pmatrix} -1_{A^{\vee}}\\ \alpha^{\vee }  \end{pmatrix}"]
& A^{\vee }\oplus A^{\vee }
\arrow[ur, phantom, "{\scriptstyle(3)}" description]
\arrow[r,"B_{3}"]
& A^{\vee }
\arrow[r]
& 0
&
\\
& 0
\arrow[r]
\arrow[u]
& A
\arrow[ur, phantom, "{\scriptstyle(4)}" description]
\arrow[r,"1_{A}"]
\arrow[u," \begin{pmatrix} 0\\ \phi   \end{pmatrix}"]
& A
\arrow[r]
\arrow[u,"\phi "]
& 0
\arrow[u]
&
\end{tikzcd}
\end{equation*}

By commutativity of the square (1) and exactness of row 2, we see that
$B_{1}$ must be $(1_{A},0)$. Similarly, we see from commutativity of
square (4) and exactness of the fourth row, $B_{3}$ must be
$(\alpha^{\vee},1_{A^{\vee}})$. Then we can compute $B_{2}$ in two
different ways: by the composition around the sides and bottom of
square (3) and by the composition around the sides and top of square
(2). The result is

\[
B_{2} = \phi\gamma  = -j^{\vee}.
\]
and the second equality is equivalent to the desired equality in the lemma.
\qed 

We are now in a position to complete the proof of Theorems~\ref{thm:
quiver description of Loop2Alg(LGr)} and \ref{thm: quiver description
of Loop2Alg(OGr)}. Using Proposition~\ref{prop: Loop2(Xpm) is a closed set in
Loop(Gr(n,2n)) satisfying condition involving theta and gamma} along
with the construction of $\phi$ and Equations~\eqref{eqns: closed
conditions} which were proved in Lemmas~\ref{lem: phivee = mp phi},
\ref{lem: phi.alpha=alphavee.phi}, and \ref{lem: phi.gamma = -jvee},
we see that 
\[
\LoopTwo (\Xpm ) = W^{\open}_{\mp,n,d} / GL(A)
\]
where $W_{\mp ,n,d}$ is the set of
\[
(\alpha ,\gamma ,j,\phi )\in \Hom (A,A)\oplus \Hom
(U^{\vee},A)\oplus  \Hom (A,U)\times \operatorname{Isom}(A,A^{\vee})
\]
where Equations~\eqref{eqns: closed conditions} hold and $W^{\open}_{\mp
,n,d}\subset W_{\mp ,n,d}$ is the open set given by the conditions

\bigskip

\begin{enumerate}
\item $\Ker (\alpha -\lambda \cdot \Id_{A})\cap \Ker (j) = 0$ for all
$\lambda$.
\item  $\im (\alpha -\lambda \cdot \Id_{A})+  \im (\gamma )=A$ for
all $\lambda$.
\end{enumerate}

\bigskip

In the symplectic case, all non-degenerate alternating forms on $A$
are isomorphic. In the orthogonal case, we restrict to split
non-degenerate symmetric forms, which form a single $GL(A)$-orbit.
Thus in either case we may fix such a form $\phi $ on $A$ and quotient
by its stabilizer
\[
G(A,\phi)=\{g\in GL(A):g^\vee \phi g=\phi\}.
\]

Since $\phi$ is invertible, we may eliminate $\gamma$ using the
equation $\gamma =-\phi^{-1} j^{\vee}$. Thus we get
\[
\LoopTwo (\Xpm ) = V_{\mp ,n,d}^{\open}/G(A,\phi )
\]
where $(A,\phi )$ is a $d$-dimensional vector space equipped with a 
non-degenerate form $\phi :A\to A^{\vee}$ satisfying $\phi^{\vee}=\mp
\phi$, assumed split in the symmetric case, 
\[
V_{\mp ,n,d} = \{(\alpha ,j)\in \Hom (A,A)\oplus \Hom (A,U),\quad \phi
\alpha =\alpha^{\vee}\phi \}, 
\]
and $V^{\open}_{\mp, n,d}\subset V_{\mp, n,d}$ is the open set satisfying

\bigskip

\begin{enumerate}
\item $\Ker (\alpha -\lambda \cdot \Id_{A})\cap \Ker (j) = 0$ for all
$\lambda$.
\item  $\im (\alpha -\lambda \cdot \Id_{A})+  \im (\phi^{-1}j^{\vee} )=A$ for
all $\lambda$.
\end{enumerate}
  
\bigskip

Condition (2) above is equivalent to condition (1). Indeed, we have
the following equivalences of conditions
\begin{align*}
\im (\alpha -\lambda \cdot \Id_{A})+  \im (\phi^{-1}j^{\vee} )&=A
\text{ for all } \lambda \iff \\
\im (\phi \alpha\phi^{-1} -\lambda \cdot \Id_{A^{\vee }})+  \im
(j^{\vee} )&=A^{\vee }
\text{ for all } \lambda \iff \\
\im (\alpha^{\vee } -\lambda \cdot \Id_{A^{\vee }})+ \im
(j^{\vee} )&=A^{\vee }
\text{ for all } \lambda \iff \\
\Ker (\alpha -\lambda \cdot \Id_{A})\cap \Ker (j) &= 0 \text{ for all } \lambda .
\end{align*}
The first equivalence uses the isomorphism $\phi$, the second
equivalence uses the equation $\phi \alpha  = \alpha^{\vee}\phi$, and
the final equivalence follows by duality.

Thus we have proved the following version of Theorems \ref{thm: quiver
description of Loop2Alg(LGr)} and \ref{thm: quiver description of
Loop2Alg(OGr)} with a more explicit description of the open condition:
\begin{proposition}\label{prop: quiver description of Loop2Alg(Xpm)
with explicit open condition}
Let $U$ be a vector space of dimension $n$ and let $(A,\phi )$ be a
$d$-dimensional vector space equipped with a non-degenerate form $\phi
:A\to A^{\vee}$ satisfying $\phi^{\vee}=\mp \phi$, where in the
symmetric case $\phi$ is assumed split. Let
\[
V_{\mp ,n,d} = \{(\alpha ,j)\in \Hom (A,A)\oplus \Hom (A,U):\quad \phi
\alpha =\alpha^{\vee}\phi \}
\]
and let $V^{\open}_{\mp ,n,d}\subset V_{\mp ,n,d}$ be the open set given
by the condition
\[
\Ker (\alpha -\lambda \cdot 1_{A})\cap \Ker (j) = 0\text{ for all } \lambda \in \overline{\kk}.
\]
Then there is a natural isomorphism
\[
\LoopTwo (\Xpm ) = V^{\open}_{\mp ,n,d}/G(A,\phi )
\]
where $G(A,\phi ) = \{g\in GL(A): g^{\vee}\phi g = \phi \}$. 
\end{proposition}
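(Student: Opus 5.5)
The plan is to assemble the identification from Proposition~\ref{prop: Loop2(Xpm) is a closed set in Loop(Gr(n,2n)) satisfying condition involving theta and gamma} and Lemmas~\ref{lem: phivee = mp phi}, \ref{lem: phi.alpha=alphavee.phi}, \ref{lem: phi.gamma = -jvee}. The argument preceding the statement gives a morphism
\[
\LoopTwo(\Xpm)\to V^{\open}_{\mp,n,d}/G(A,\phi).
\]
It sends a based map to the quiver data $(A,\alpha,j,\gamma)$ of Proposition~\ref{prop: quiver description of Loop2(Gr(n,n+N)) with explicit stability condition} together with the intrinsically defined form $\phi$. We then use the $GL(A)$-action to move $\phi$ to the fixed normal form, which is possible since nondegenerate alternating forms, respectively split symmetric forms, form a single orbit, and we discard $\gamma=-\phi^{-1}j^{\vee}$. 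What remains is to show three things: this is well defined, it is injective, and it is surjective onto $V^{\open}_{\mp,n,d}/G(A,\phi)$. This must hold scheme-theoretically, so I would run the whole argument in families over a base $S$, where every construction used (Str\o mme's resolution, Serre duality, the maps $\JJ$ and $\Phi$) is functorial.

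\emph{Well-definedness and injectivity.} The form $\phi$ is built canonically from the sequence \eqref{eqn: Evee-->U+Uvee-->E}. Hence two points of $V^{\open}_{\mp,n,d}$ lie over the same loop exactly when they are related by some $g\in GL(A)$. Such a $g$ carries one copy of the canonical form to the other. Since both equal the fixed $\phi$, we get $g^{\vee}\phi g=\phi$, that is, $g\in G(A,\phi)$. So the residual gauge group is exactly $G(A,\phi)$, and $V^{\open}_{\mp,n,d}/G(A,\phi)\to W^{\open}_{\mp,n,d}/GL(A)$ is an isomorphism. Here $W^{\open}_{\mp,n,d}/GL(A)$ is the quotient in which $\phi$ varies and is not fixed, and the isomorphism is the standard one for a homogeneous space $GL(A)/G(A,\phi)$ of forms. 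The openness conditions (1) and (2) coincide, by the chain of equivalences displayed just before the statement.

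\emph{Surjectivity, the main obstacle.} Start from $(\alpha,j)\in V^{\open}_{\mp,n,d}$ and set $\gamma=-\phi^{-1}j^{\vee}$. Conditions (1) and (2) then hold, so Proposition~\ref{prop: quiver description of Loop2(Gr(n,n+N)) with explicit stability condition} produces a based map to $\Gr(n,2n)$, i.e.\ a surjection $(\UL{\theta},\UL{x}_1\UL{\gamma}):\UL{U}\oplus\UL{U}^{\vee}\to E$. We must verify the isotropy condition \eqref{eqn: theta gammadual x1 = mp x1 gamma thetadual}, namely that the composite $E^{\vee}\to E$ vanishes. This is a map between vector bundles, so it suffices to check it after composing with a surjection onto the source and an injection from the target. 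I would use the resolution \eqref{eqn: A(-1)->A+U->E}, namely $\UL{A}\oplus\UL{U}\to E$ with kernel $\UL{A}(-1)$, for the target. For the source I would use its dual, $E^{\vee}\hookrightarrow \UL{A}^{\vee}(1)\oplus\UL{U}^{\vee}$, with cokernel $\UL{A}^{\vee}(1)$. The vanishing then becomes a matrix identity of sheaf maps built from $\UL{x}_0,\UL{x}_1,\alpha,j,\phi$, which must hold modulo the image of $\UL{A}(-1)$.

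Here the relations $\phi^{\vee}=\mp\phi$ and $\phi\alpha=\alpha^{\vee}\phi$ are what make it collapse: the terms $\UL{x}_1 j\phi^{-1}j^{\vee}$ should cancel against their transposes up to the sign $\mp$, and $\UL{\sigma}=\UL{x}_1\alpha-\UL{x}_0$ should intertwine with $\UL{\sigma}^{\vee}$ via $\phi$. I expect the sign bookkeeping here to be the delicate point. This is the same sign flip noted in Lemma~\ref{lem: phivee = mp phi}, where $J_{\pm}$ leads to $\phi^{\vee}=\mp\phi$.

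If the direct computation becomes unwieldy, the fallback is to reverse the diagram of Lemma~\ref{lem: phi.gamma = -jvee}. From $\phi$ we build the quasi-isomorphism $A^{\bullet}\to\DD A^{\bullet}$, given by $\phi\UL{x}_1$ in each degree. Conjugating by $\JJ$ yields an isomorphism $\DD F^{\bullet}\to F^{\bullet}$, which by the homological-algebra principle is a short exact sequence $E^{\vee}\to\UL{U}\oplus\UL{U}^{\vee}\to E$. We then identify its maps with $\bigl(\begin{smallmatrix}\UL{\gamma}^{\vee}\UL{x}_1\\ \pm\UL{\theta}^{\vee}\end{smallmatrix}\bigr)$ and $(\UL{\theta},\UL{x}_1\UL{\gamma})$, using $\phi\gamma=-j^{\vee}$. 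Exactness of this sequence then gives the isotropy condition automatically.

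Finally, we check that the two constructions are mutually inverse. The intrinsic form recomputed from the resulting loop agrees with the given $\phi$: by Lemma~\ref{lem: phi.gamma = -jvee}, $\phi\gamma=-j^{\vee}$ holds for the recomputed form, and $\gamma$ is injective on the relevant data, or one can argue via the same diagram. Together with injectivity, this gives the isomorphism $\LoopTwo(\Xpm)= V^{\open}_{\mp,n,d}/G(A,\phi)$.
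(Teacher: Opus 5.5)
Your skeleton is the paper's: the three lemmas, normalizing $\phi$ by the $GL(A)$-action, eliminating $\gamma=-\phi^{-1}j^{\vee}$, and the equivalence of (1) and (2). The difference is that you explicitly prove the converse: for $(\alpha,j)\in V^{\open}_{\mp,n,d}$, the Grassmannian loop with $\gamma=-\phi^{-1}j^{\vee}$ is isotropic and has intrinsic form $\phi$. The paper instead passes from Lemmas~\ref{lem: phivee = mp phi}--\ref{lem: phi.gamma = -jvee} straight to $\LoopTwo(\Xpm)=W^{\open}_{\mp,n,d}/GL(A)$ and leaves this direction implicit. The paper's version buys brevity; yours buys a genuinely two-sided identification.

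Your direct isotropy check does go through, but not as you phrase it. The criterion you state (a surjection onto $E^{\vee}$ and an injection out of $E$) is not what the resolutions give you, and the middle term is $\UL{A}^{\vee}\oplus\UL{U}^{\vee}$, not $\UL{A}^{\vee}(1)\oplus\UL{U}^{\vee}$. Since $\UL{x}_1\UL{\gamma}=\UL{\beta}\gamma$, the composite $E^{\vee}\to E$ factors as
\[
E^{\vee}\hookrightarrow \UL{A}^{\vee}\oplus\UL{U}^{\vee}\xrightarrow{\;N\;}\UL{A}\oplus\UL{U}\twoheadrightarrow E,\qquad N=\begin{pmatrix}0&\pm\gamma\\ \gamma^{\vee}&0\end{pmatrix},
\]
so isotropy means $N(E^{\vee})\subset\operatorname{im}\bigl(\begin{smallmatrix}\UL{\sigma}\\ \UL{x}_1 j\end{smallmatrix}\bigr)$. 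Because $E$ is torsion free, it suffices to check this on $\{x_1\neq 0\}$. There, take a local section $(a^{*},\eta)$ of $E^{\vee}$, so $\UL{\sigma}^{\vee}a^{*}=-\UL{x}_1 j^{\vee}\eta$. Then $b=\pm\UL{x}_1^{-1}\phi^{-1}a^{*}$ is a lift:
\begin{itemize}
\item $\UL{\sigma}b=\pm\UL{x}_1^{-1}\phi^{-1}\UL{\sigma}^{\vee}a^{*}=\pm\gamma\eta$, using $\phi\alpha=\alpha^{\vee}\phi$;
\item $\UL{x}_1 jb=\pm j\phi^{-1}a^{*}=\gamma^{\vee}a^{*}$, because $\phi^{\vee}=\mp\phi$ gives $\gamma^{\vee}=\pm j\phi^{-1}$.
\end{itemize}
That is the only place the symmetry of $\phi$ enters. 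No cancellation of $j\phi^{-1}j^{\vee}$ terms is needed, and neither is the derived-category fallback. The fallback would in any case require realizing a derived isomorphism $\DD F^{\bullet}\to F^{\bullet}$ by an honest chain map between those particular two-term complexes.

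The step that fails as written is the last one. Injectivity of $\gamma$ is not the relevant property: it is impossible when $n>d$, and it would not give $\phi'=\phi$ from $\phi'\gamma=\phi\gamma$ anyway. Use Lemma~\ref{lem: phi.alpha=alphavee.phi} as well. The recomputed form $\phi'$ and the given $\phi$ both satisfy $\psi\alpha=\alpha^{\vee}\psi$ and agree on $\operatorname{im}\gamma$. Hence $(\phi'-\phi)\alpha^{k}\gamma=(\alpha^{\vee})^{k}(\phi'-\phi)\gamma=0$ for all $k$, and $\sum_{k}\alpha^{k}(\operatorname{im}\gamma)=A$ is exactly condition (2).

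The same argument is silently needed in your injectivity paragraph, where you assert that both canonical forms ``equal the fixed $\phi$.'' It also supplies the uniqueness of $\phi$ that the paper's $W^{\open}_{\mp,n,d}$-description tacitly uses.
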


To complete the proof of Theorems \ref{thm: quiver description of
Loop2Alg(LGr)} and \ref{thm: quiver description of Loop2Alg(OGr)}, it
remains to show that $V^{\open}_{\mp ,n,d} = V^{\stable }_{\mp ,n,d}$
where $V^{\stable }_{\mp ,n,d}$ is the locus of GIT stable points for
the action of $G(A,\phi )$ on $V_{\mp ,n,d}$. To do this it will be
convenient to geometrically identify the full stack quotient $[V_{\mp
,n,d} / G(A,\phi )]$, i.e., to give it a useful moduli interpretation.

Revisiting the proof of Proposition~\ref{prop: quiver description of
Loop2Alg(Xpm) with explicit open condition} in the absence of the open
condition, we find that the stack $[V_{\mp ,n,d} / G(A,\phi )]$ is
equivalent to the stack of complexes on $\PP^{1}$ of the form
\[
\begin{tikzcd}[column sep={6em,between origins}]
E^{\vee } \arrow[r, "J_{\pm }^{-1}\circ \UL{f}^{\vee}"] &
\UL{U}\oplus \UL{U}^{\vee } \arrow[r, "\UL{f}"] & E,
\end{tikzcd}
\]
(``standard'' over $p_{\infty}$), exact in the middle, but where {\em $E$ is no
longer necessarily locally free and $\UL{f}$ is no longer necessarily
surjective}. Note that for self-dual sequences of the above form,
$\UL{f}$ is surjective if and only if $E$ is locally free (indeed,
this is the equivalence of conditions (1) and (2) proven above). It
follows that $E$ is locally free if and only if the sequence has no
non-trivial automorphisms. Moreover, the automorphism group of the
sequence is trivial if and only if it is finite. Thus the open set
$V^{\open}_{\mp ,n,d}$ is precisely the locus where $G(A,\phi )$ acts
freely (or equivalently with finite automorphisms).

Now a point $(\alpha ,j)\in V_{\mp ,n,d}$ is GIT stable for the
$G(A,\phi )$ action if and only if the orbit $G(A,\phi )\cdot (\alpha
,j)$ is closed and $(\alpha ,j)$ has finite stabilizer. Thus in
particular, $V^{\stable}_{\mp ,n,d}\subset V^{\open}_{\mp
,n,d}$. Conversely, if $(\alpha ,j)\in V^{\open}_{\mp ,n,d}$, then the
corresponding point $(\alpha ,j,-\phi^{-1}j^{\vee})\in V_{n,n,d}$ is
in $V^{\open}_{n,n,d}=V^{\stable}_{n,n,d}$ and hence the $GL(A)$ orbit
is a closed embedding of $GL(A)$ in $V_{n,n,d}$. Since $G(A,\phi
)\subset GL(A)$ is a closed subgroup, it follows that the orbit
$G(A,\phi )\cdot (\alpha ,j,-\phi^{-1}j^{\vee})$ is closed in
$V_{n,n,d}$ and hence $G(A,\phi )\cdot (\alpha ,j)$ is closed in
$V_{\mp ,n,d}$. Since we know points in $V^{\open}_{\mp ,n,d}$ have
finite stabilizers, it follows that $(\alpha ,j)$ is $G(A,\phi )$-stable and thus $V^{\open}_{\mp ,n,d}\subset V^{\stable }_{\mp ,n,d}$
which proves $V^{\open}_{\mp ,n,d}=V^{\stable }_{\mp ,n,d}$ and
completes the proofs of Theorems \ref{thm: quiver description of
Loop2Alg(LGr)} and \ref{thm: quiver description of
Loop2Alg(OGr)}. \qed

\section{Details of the topological argument}\label{sec: topological details}

In this section we provide the details for the topological
consequences of our work which we outlined in subsection~\ref{subsec:
topological consequences}. The results are mainly expressed as various
objects and isomorphisms in $\HoG$, the homotopy category of algebraic
spaces defined in \cite{Larson-Vakil-BottPeriodicity}. There is a
\emph{topological realization functor} 
\[
\HoG_{\CC } \xrightarrow{\top } \HoTop
\] 
from $\HoG_{\CC }\subset \HoG$, the subcategory of objects of $\HoG$ defined
over $\CC$,  to the homotopy category of topological spaces and so any
isomorphism in $\HoG_{\CC}$ yields a homotopy equivalence between the
corresponding topological spaces.

We list the results first and supply proofs afterward. We repeatedly (without comment) use the
standard homotopy equivalences between the general linear and unitary groups, since $GL$
is the natural language of the algebro-geometric results, and $U$ is the natural language
of the classical topological results.

\begin{proposition}\label{prop: Loop2(BU)=BGLd in HoG}
There is a sequence of inclusions
\[
\dotsb \hookrightarrow \LoopTwo (\Gr (n,n+N))\hookrightarrow \LoopTwo
(\Gr (n+1,n+N+2))\hookrightarrow \dotsb
\]
which gives a well-defined object in $\HoG$ which we denote $\LoopTwo
(BU)$. Moreover, there is an isomorphism
\[
\LoopTwo (BU)\cong BGL_{d}
\]
in $\HoG$. 
\end{proposition}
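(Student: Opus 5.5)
The plan is to work entirely on the quiver side via Proposition~\ref{prop: quiver description of Loop2(Gr(n,n+N)) with explicit stability condition}, which identifies $\LoopTwo(\Gr(n,n+N))$ with $V^{\open}_{n,N,d}/GL(A)$. First I define the stabilization maps. Enlarge $U\subset U'=U\oplus\kk$ and $W\subset W'=W\oplus\kk$ (so $n\mapsto n+1$, $N\mapsto N+1$, and $n+N\mapsto n+N+2$). On quiver data this is $(\alpha,j,\gamma)\mapsto(\alpha,(j,0),(\gamma,0))$, i.e.\ extend $j:A\to U$ by composing with $U\hookrightarrow U'$ and extend $\gamma$ by zero on the new summand of $W'$. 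Conditions (1) and (2) are clearly preserved, since $\Ker(j)$ and $\im(\gamma)$ are unchanged. The map is $GL(A)$-equivariant, so it descends to a morphism of quotients. On the Grassmannian side it corresponds to a map $\Gr(n,U\oplus W)\hookrightarrow\Gr(n+1,U'\oplus W')$ preserving basepoints, sending a quotient $L$ to $L\oplus\kk$. I will check this identification directly from the construction $E\mapsto E\oplus\OO$, which adds a trivially framed summand and does not change $A=H^0(E(-1))$. This makes the sequence a sequence of inclusions of smooth varieties.

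Next I form the three-row diagram from \S\ref{subsec: topological consequences}: $V^{\open}_{n,N,d}/GL_d\subset [V_{n,N,d}/GL_d]\to pt/GL_d$. The bottom map is a vector bundle over $BGL_d$, hence an affine bundle and an isomorphism in $\HoG$ by \cite{Larson-Vakil-BottPeriodicity}. The horizontal maps of the middle row are inclusions of vector bundles over the same base $BGL_d$, and are compatible with the identity maps on the bottom row. Granting the formal properties of $\HoG$ (affine bundles and increasingly connected compatible systems are isomorphisms), it then suffices to show the top-to-middle inclusions are increasingly highly connected. For open inclusions of smooth stacks this follows once the codimension of the complement $Z_{n,N}=V\setminus V^{\open}$ tends to infinity with $n,N$.

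The main step, and the expected obstacle, is this codimension estimate. I stratify $Z$ by the failure of condition (1) or of condition (2). By Remark~\ref{rem: duality induced by Gr(n,n+N)=Gr(N,n+N)}, the duality $(\alpha,j,\gamma)\mapsto(\alpha^\vee,\gamma^\vee,j^\vee)$ swaps the two conditions (and $n\leftrightarrow N$), so I only treat (1). Failure of (1) means there is an eigenvector $v\ne0$ of $\alpha$ with $j(v)=0$. Parametrize triples $([v],\lambda,\alpha,j)$ with $\alpha v=\lambda v$ and $jv=0$. Choosing $[v]$ and $\lambda$ costs $d$ dimensions. The condition $\alpha v=\lambda v$ imposes $d$ conditions on $\alpha$, and $jv=0$ imposes $n$ conditions on $j$. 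So the bad locus has codimension at least $d+n-d=n$. Similarly, the failure of (2) has codimension at least $N$. Hence $\operatorname{codim}Z\ge\min(n,N)\to\infty$, and the connectivity of $V^{\open}/GL_d\hookrightarrow V/GL_d$ grows roughly like $2\min(n,N)-1$ in the complex topology. This is the notion required in \cite{Larson-Vakil-BottPeriodicity}. I would double-check that their definition of "increasingly connected" is indeed satisfied by complements of high-codimension closed substacks in smooth stacks, which I believe is one of their basic lemmas.

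Combining these steps, the top row is a well-defined object $\LoopTwo(BU)$ of $\HoG$, and the zig-zag top $\to$ middle $\to$ bottom gives $\LoopTwo(BU)\cong BGL_d$.
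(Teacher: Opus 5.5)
Your proposal is correct, and its skeleton matches the paper's:
\begin{itemize}
\item the same stabilization $(\alpha,j,\gamma)\mapsto(\alpha,(j,0),(\gamma,0))$, geometrically $E\mapsto E\oplus\OO$;
\item the same comparison $V^{\stable}_{n,N,d}/GL_d\subset[V_{n,N,d}/GL_d]\to BGL_d$, with the last map an affine bundle;
\item the same reduction to showing that the unstable locus has codimension at least $\min(n,N)$, using the duality $(\alpha,j,\gamma)\mapsto(\alpha^\vee,\gamma^\vee,j^\vee)$ to get one of the two conditions from the other.
\end{itemize}

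The difference is in how the codimension bound is proved. The paper treats the failure of condition (2), i.e.\ non-surjectivity of $\UL{U}\oplus\UL{W}\to E$, moduli-theoretically. On the generic part of that locus $\coker f\cong\OO_p$. Replacing $E$ by the image $E'$ of $f$ maps this part to $V^{\stable}_{n,N,d-1}/GL_{d-1}$ with $n$-dimensional fibers: the point $p$ plus the elementary modification $E'\subset E$. This gives codimension exactly $N$, and duality then gives $n$ for condition (1).

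You instead bound the failure of (1) directly, using the incidence variety of tuples $([v],\lambda,\alpha,j)$ with $\alpha v=\lambda v$ and $jv=0$. This is a pure linear-algebra count, and it is the same technique the paper itself uses later in the codimension lemma for $V_{\mp,n,d}$.

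Comparing the two routes:
\begin{itemize}
\item Your count only gives an inequality, but that is all the connectivity argument needs.
\item It controls the whole bad locus at once. The paper's computation only treats the generic stratum (locally free $E$, length-one cokernel) and leaves implicit that deeper strata are smaller.
\item In exchange, the paper's route gives the exact codimension and a geometric description of the boundary.
\end{itemize}

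One small point of bookkeeping. The paper records a complement of codimension $c$ as $(c-1)$-connectivity in the sense of \cite[Definition~2.1]{Larson-Vakil-BottPeriodicity}. So the statement you want is $(\min(n,N)-1)$-connectivity in that sense; your figure $2\min(n,N)-1$ is the corresponding topological connectivity in the complex analytic topology. Since both tend to infinity with $n$ and $N$, nothing else in your argument changes.
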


Taking topological realizations yields

\begin{corollary}
The inductive limit of the varieties $\LoopTwo (\Gr (n,n+N))$ under
the above inclusions, in the complex analytic topology, has the
homotopy type of $BU(d)$:
\[
\lim_{n,N\to \infty} \LoopTwo (\Gr (n,n+N)) \homotopyeq BU(d).
\]
\end{corollary}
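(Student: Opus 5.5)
The corollary follows by applying the topological realization functor $\top:\HoG_{\CC}\to\HoTop$ to the isomorphism $\LoopTwo(BU)\cong BGL_{d}$ of Proposition~\ref{prop: Loop2(BU)=BGLd in HoG}. Since it is a functor, it takes isomorphisms to homotopy equivalences, so $\top(\LoopTwo(BU))\homotopyeq \top(BGL_{d})$. The work is to identify the two sides.

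For the left side, I would use the description of objects of $\HoG$ recalled in \S\ref{subsec: topological consequences}. For an object given by a sequence $\dotsb\to X_i\to X_{i+1}\to\dotsb$ of increasingly highly connected maps of varieties, the realization is the homotopy type of $\lim_i X_i(\CC)$ with the analytic topology and the direct limit topology. The sequence here consists of the closed inclusions $\LoopTwo(\Gr(n,n+N))\hookrightarrow\LoopTwo(\Gr(n+1,n+N+2))$. These are closed embeddings of varieties, so the analytic direct limit is a CW-type colimit and agrees with the homotopy colimit. Hence the realization is literally $\lim_{n,N\to\infty}\LoopTwo(\Gr(n,n+N))$.

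For the right side, $BGL_{d}$ is represented in $\HoG$ by the quotient stack $pt/GL_{d}$. Alternatively, it is represented by the standard finite approximations, such as Grassmannians of $d$-planes in $\CC^{m}$ as $m\to\infty$, via affine-bundle and highly-connected comparisons. Its realization is $BGL_{d}(\CC)\homotopyeq BU(d)$, using the standard equivalence $GL_d(\CC)\simeq U(d)$ that the paper invokes without comment.

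The only point needing care is the identification of the realization of $pt/GL_d$ with the classifying space $BU(d)$. I expect this to be built into the Larson--Vakil construction of $\top$, which sends $pt/G$ to $BG$, so it can simply be cited from \cite{Larson-Vakil-BottPeriodicity}. The identification of the left-hand realization with the naive direct limit likewise comes from the definition of $\top$ on objects presented as sequences. With both in hand, the corollary is immediate.
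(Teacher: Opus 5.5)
Your proposal is correct and matches the paper's argument: the corollary follows by applying the topological realization functor $\HoG_{\CC}\to\HoTop$ to the isomorphism $\LoopTwo(BU)\cong BGL_{d}$ of Proposition~\ref{prop: Loop2(BU)=BGLd in HoG}. Your additional remarks identifying the two realizations (the analytic direct limit along closed embeddings on one side, and $BGL_{d}(\CC)\simeq BU(d)$ on the other) only make explicit what the paper uses without comment.
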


\begin{proposition}\label{prop: Loop2inft(BU)=BGL in HoG}
There is a sequence of inclusions 
\[
\dotsb \hookrightarrow \LoopTwo (\Gr (n,n+N))
\hookrightarrow \Omega^{2}_{d+1,\alg} (\Gr (n+1,n+N+2)) \hookrightarrow \dotsb 
\]
which gives a well-defined object in $\HoG$ which we denote
$\Omega^{2}_{\infty ,\alg}(BU)$. Moreover, there is an isomorphism
\[
\Omega^{2}_{\infty ,\alg}(BU)\cong BGL
\]
in $\HoG$. 
\end{proposition}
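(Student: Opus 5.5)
The plan is to work entirely on the quiver side, using Theorem~\ref{thm: quiver description of Loop2alg(Gr(n,n+N))} and Proposition~\ref{prop: quiver description of Loop2(Gr(n,n+N)) with explicit stability condition}, and to define the stabilization maps there.

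\textbf{Step 1: the stabilization map.} Given $(A,U,W)$ of dimensions $(d,n,N)$, set $A'=A\oplus \kk a$, $U'=U\oplus \kk u$, $W'=W\oplus \kk w\oplus \kk w'$, so the dimensions become $(d+1,n+1,N+2)$. Send $(\alpha,j,\gamma)$ to $(\alpha',j',\gamma')$ with
\begin{itemize}
\item $\alpha'=\alpha\oplus 0$;
\item $j'=j\oplus(a\mapsto u)$;
\item $\gamma'=\gamma\oplus(w\mapsto a)\oplus(w'\mapsto 0)$.
\end{itemize}
Sheaf-theoretically this is direct sum with a fixed based degree-one map $\PP^1\to\PP^1=\Gr(1,2)$, padded by a constant trivial quotient. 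Both open conditions (1) and (2) are checked block by block.
\begin{itemize}
\item On the new block, $\alpha'-\lambda$ is $-\lambda$. For $\lambda\neq 0$ this is an isomorphism; for $\lambda=0$ the block still satisfies (1) and (2) because $j'(a)=u\neq 0$ and $\gamma'(w)=a$.
\item Since the decomposition is $\alpha'$-invariant, the conditions hold for the sum exactly when they hold for each summand.
\end{itemize}
So the map sends $V^{\open}_{n,N,d}$ to $V^{\open}_{n+1,N+2,d+1}$ and is equivariant for $GL(A)\hookrightarrow GL(A')$. It therefore descends to a morphism of quotients. This morphism is a closed embedding, since the stable quotients are free quotients with closed orbits and the image is cut out by the block-form conditions up to $GL(A')$.

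\textbf{Step 2: the comparison diagram.} Form the analogue of the diagram of \S\ref{subsec: topological consequences}, with three rows:
\begin{itemize}
\item the top row is $V^{\stable}_{n,N,d}/GL_d$;
\item the middle row is the stack quotients $[V_{n,N,d}/GL_d]$;
\item the bottom row is $BGL_d$, with transition maps $BGL_d\to BGL_{d+1}$ induced by $A\mapsto A\oplus\kk$.
\end{itemize}
These bottom-row maps form the standard presentation of $BGL$ in $\HoG$. The vertical maps from the middle row to the bottom row are vector bundles, hence isomorphisms in $\HoG$. The vertical maps from the top row to the middle row are open inclusions, and their connectivity is governed by the codimension of the unstable locus $V_{n,N,d}\setminus V^{\stable}_{n,N,d}$.

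\textbf{Step 3: the codimension estimate (main obstacle).} We need this codimension to tend to $\infty$ along the sequence, even though $d$ grows along with $n$ and $N$. Failure of (1) means there is an eigenvector $v$ with $j(v)=0$. Parametrising such pairs $(\lambda,[v])$ costs $d$ dimensions, while $j(v)=0$ imposes $n$ conditions, and $\alpha v=\lambda v$ imposes roughly $d-1$ further conditions beyond those spent choosing $v$. This gives codimension at least about $n-d$. Dually, failure of (2) has codimension at least about $N-d$. Hence I will arrange the sequence so that $n-d$ and $N-d$ both go to infinity, for example by interleaving the above stabilization with the $d$-preserving stabilizations of Proposition~\ref{prop: Loop2(BU)=BGLd in HoG}, which increase $n$ and $N$ only. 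The cofinality of such composites is allowed in $\HoG$.

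I expect the main difficulty to be making this estimate rigorous over all $\lambda\in\overline{\kk}$. To do so I would stratify by the dimension of the minimal $\alpha$-invariant subspace contained in $\ker j$, and count dimensions fibrewise over the Grassmannian of such subspaces.

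\textbf{Step 4: conclusion.} With the estimate in hand, the top-row transition maps are increasingly connected, since the middle and bottom rows are. So the sequence defines an object $\Omega^2_{\infty,\alg}(BU)$ of $\HoG$, and the zigzag of Step 2 gives $\Omega^{2}_{\infty,\alg}(BU)\cong BGL$.
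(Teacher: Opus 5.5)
\paragraph{Overall assessment.} Your skeleton matches the paper's proof: a direct sum with a fixed rank-one, degree-one block, then the three-row diagram with the affine bundles $[V_{n,N,d}/GL_d]\to BGL_d$ and the connectivity of $BGL_d\to BGL_{d+1}$. The blockwise check of conditions (1) and (2) is fine. The problem is Step~3, and it changes what you actually prove.

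\paragraph{The gap in Step 3.} The proposition concerns the specific sequence $(d,n,N)\mapsto(d+1,n+1,N+1)$. Along this sequence $n-d$ and $N-d$ are constant, so a bound of the form $\operatorname{codim}\ge n-d$ can never show that the vertical open inclusions become increasingly connected. Your remedy, interleaving the $d$-preserving stabilizations, produces a different sequence. It does not show the stated one is an object of $\HoG$, because the interleaved sequence is not a subsequence of the stated one, so cofinality does not apply; the stated sequence's own maps must be shown to be increasingly connected. There is also a bookkeeping error in Step~1. Your map adds two vectors to $W$, so it lands in $\Gr(n+1,n+N+3)$ rather than $\Gr(n+1,n+N+2)$. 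The map in the statement is direct sum with the single block $(\kk,\kk,\kk,(0,1,1))$, so you should drop $w'$.

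\paragraph{The missing input.} The unstable locus has codimension $\min(n,N)$, independent of $d$; this is Lemma~\ref{lem: codimension of unstable locus for the Gr quiver}. Your own incidence count gives this once it is done correctly.
\begin{itemize}
\item Fix $([v],\lambda)\in\PP(A)\times\AA^1$ over $\overline{\kk}$. Since $v\neq0$, the condition $jv=0$ cuts out a linear subspace of codimension $n$, and $\alpha v=\lambda v$ cuts out an affine subspace of codimension $d$.
\item So the incidence variety has fibres of codimension $n+d$ in $V_{n,N,d}$ over a base of dimension $d$. Its dimension is $\dim V_{n,N,d}+d-(n+d)=\dim V_{n,N,d}-n$.
\item Hence the failure locus of (1) has codimension at least $n$. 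The $d$ eigenvector conditions exactly cancel the $d$ parameters; they do not leave a deficit of $d$. Even your own tally, $n+(d-1)-d$, gives $n-1$, not $n-d$.
\item By the duality of Remark~\ref{rem: duality induced by Gr(n,n+N)=Gr(N,n+N)}, failure of (2) has codimension at least $N$.
\end{itemize}
The paper gets the same numbers by a stack-dimension count through elementary modifications $0\to E'\to E\to\OO_p\to0$.

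\paragraph{Consequence.} Since $\min(n,N)\to\infty$ along the stated sequence, the vertical inclusions are $(\min(n,N)-1)$-connected. The interleaving and the stratification by invariant subspaces are unnecessary, and Steps~2 and~4 then go through exactly as in the paper.
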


The Boyer-Mann-Hurtubise-Milgram theorem
(Theorem~\ref{thm: Mann-Milgram algebraic loop approximation
holds for generalized flag varieties})
says that in the limit as $d\to \infty$, the
algebraic loop spaces of the Grassmannians are homotopy equivalent to
their topological loop spaces. Then since
\[
\lim_{n,N\to \infty} \Gr (n,n+N) \homotopyeq  BU,
\]
we have
\[
\lim_{d,n,N\to \infty} \LoopTwo (\Gr (n,n+N)) \homotopyeq \LoopTwoTop (BU), 
\]
where on the right-hand side $d$ is any fixed value (they are all
homotopy equivalent). 

Consequently, after taking the topological realization of the
isomorphism in the proposition and composing with the above homotopy
equivalence, we recover the complex Bott periodicity homotopy
equivalence:

\begin{corollary}
\[
\LoopTwoTop (BU)\homotopyeq BU.
\]
\end{corollary}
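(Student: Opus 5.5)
The plan is to combine the topological realization of Proposition~\ref{prop: Loop2inft(BU)=BGL in HoG} with the approximation Theorem~\ref{thm: Mann-Milgram algebraic loop approximation holds for generalized flag varieties}, checking on homotopy groups that the two limits agree.

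First, apply the realization functor $\top$ to the isomorphism $\Omega^{2}_{\infty,\alg}(BU)\cong BGL$ in $\HoG$. The right side realizes to $\lim_d BGL_d(\CC)\simeq BU$. The left side realizes to the inductive limit
\[
Y=\lim_{d,n,N\to\infty}\LoopTwo(\Gr(n,n+N))
\]
along the stabilization maps of Proposition~\ref{prop: Loop2inft(BU)=BGL in HoG}, in the analytic topology. This gives $Y\simeq BU$, and it remains to show $Y\simeq\LoopTwoTop(BU)$ for a fixed $d$.

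Second, I would build a compatible topological system. For each stage there are the inclusions $i:\LoopTwo(\Gr(n,n+N))\to\LoopTwoTop(\Gr(n,n+N))$. I must choose topological stabilization maps
\[
s:\LoopTwoTop(\Gr(n,n+N))\to\Omega^{2}_{d+1,\top}(\Gr(n+1,n+N+2))
\]
so that $s\circ i\simeq i\circ s_{\alg}$, where $s_{\alg}$ is the algebraic stabilization. Concretely, $s$ should be the composite of two maps:
\begin{enumerate}
\item the map induced by the standard embedding $\Gr(n,n+N)\hookrightarrow\Gr(n+1,n+N+2)$;
\item loop-sum with a fixed degree-one based map $\PP^{1}\to\Gr(n+1,n+N+2)$ supported in the new coordinates.
\end{enumerate}
I expect the algebraic stabilization (direct sum of the Zariski framed bundle with a fixed degree-one framed bundle, or on quiver data $A\mapsto A\oplus\kk$) to be homotopic to this. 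The homotopy should come from deforming the added point of the spectrum of $\alpha$ off to $p_\infty$-adjacent regions, i.e.\ a "gluing far away" deformation. Loop-sum with a fixed class is a homotopy equivalence between components. The embedding $\Gr(n,n+N)\to BU$ is roughly $2\min(n,N)$-connected, so $\Omega^2_{\top}\Gr(n,n+N)\to\Omega^2_{\top}BU$ is roughly $(2\min(n,N)-2)$-connected. Hence $\lim_{d,n,N}\LoopTwoTop(\Gr(n,n+N))\simeq\LoopTwoTop(BU)$.

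Third, compare on homotopy groups. Fix $k$ and a sphere map into $Y$; by compactness it factors through some finite stage. By Theorem~\ref{thm: Mann-Milgram algebraic loop approximation holds for generalized flag varieties}, $i_*$ is an isomorphism on $\pi_k$ once $d$ is large relative to $k$ (for the relevant $n,N$). Then the commuting ladder shows $\pi_k(Y)\to\pi_k(\LoopTwoTop(BU))$ is an isomorphism for every $k$. Both are spaces of CW homotopy type (limits of CW inclusions, resp.\ loop spaces of CW complexes), so Whitehead's theorem gives $Y\simeq\LoopTwoTop(BU)$, hence $\LoopTwoTop(BU)\simeq BU$.

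The main obstacle is the second step: the bound $B(d,X)$ in the approximation theorem must be uniform enough in $n,N$, and the algebraic stabilization must be homotopic to the topological loop-sum map. For this I would use the explicit quiver model. Choosing $\alpha$ block diagonal with the new eigenvalue $\lambda$, letting $\lambda\to\infty$ exhibits the map as a sum of a loop concentrated near $\lambda$ with the original loop. This is precisely the gluing used by Segal and by Boyer–Mann–Hurtubise–Milgram, and I would cite their compatibility statement rather than reprove it.
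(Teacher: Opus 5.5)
Your proposal is correct and follows the paper's own route: realize the $\HoG$ isomorphism $\Omega^{2}_{\infty,\alg}(BU)\cong BGL$, then use the Boyer--Mann--Hurtubise--Milgram theorem to identify the realization of the left side with $\LoopTwoTop(BU)$. Your homotopy-commutative ladder, the uniformity of the approximation bound in $n,N$, and the Whitehead argument supply details the paper only asserts. The compatibility you single out as the main obstacle needs no eigenvalue-moving or gluing argument, because the algebraic stabilization is the pointwise direct sum $f\mapsto\mu\circ(f,\iota)$ with the degree-one based map $\iota=\mathrm{id}:\PP^{1}\to\Gr(1,2)$, and since $\mu$ restricts to the two standard inclusions on the axes, the reparametrizations $f\simeq f\cdot c$ and $\iota\simeq c\cdot\iota$ (natural in $f$) give $\mu\circ(f,\iota)\simeq\mu(f,*)\cdot\mu(*,\iota)$, which is exactly your map $s$.
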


We have analogous results for the algebraic loop spaces of the
isotropic orthogonal Grassmannian and the Lagrangian
Grassmannian. Recall the notation
\[
X_{+,n} =  O(2n)/U(n) = \OGr (n,2n),\quad  \quad X_{-,n} = Sp(2n)/U(n)=\LGr (n,2n).
\]

\begin{proposition}\label{prop: Loop2(X+,inrty)=BSp(d) in HoG and same
for X-}
There is a sequence of inclusions
\[
\dotsb \hookrightarrow \LoopTwo (X_{\pm ,n})  \hookrightarrow \LoopTwo
(X_{\pm ,n+1}) \hookrightarrow \dotsb 
\]
which gives well-defined objects in $\HoG$ which we denote $\LoopTwo
(X_{\pm,\infty })$. Moreover, there are isomorphisms
\begin{align*}
\LoopTwo (X_{+,\infty})&\cong BSp(d)\\
\LoopTwo (X_{-,\infty})&\cong BO(d)
\end{align*}
in $\HoG$. 
\end{proposition}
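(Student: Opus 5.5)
We mirror the argument given in the introduction for $\LoopTwo(BU)$, now using Proposition~\ref{prop: quiver description of Loop2Alg(Xpm) with explicit open condition}.

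\emph{Step 1: the stabilization maps.} We take the standard inclusion $X_{\pm,n}\hookrightarrow X_{\pm,n+1}$ induced by $U\mapsto U\oplus \kk$. A Lagrangian (resp.\ isotropic) quotient $f:U\oplus U^{\vee}\to L$ is sent to $f\oplus \mathrm{pr}:(U\oplus\kk)\oplus(U\oplus\kk)^{\vee}\to L\oplus\kk$. This map preserves base points, and its image is again isotropic for $J_{\pm}$. Composing a based map with it gives $\LoopTwo(X_{\pm,n})\to\LoopTwo(X_{\pm,n+1})$, and the degree is unchanged because the inclusion pulls back the Plücker class to the Plücker class. On the quiver side the effect is to replace $E$ by $E\oplus\OO$ with framing $\theta\oplus 1$. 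Then $A=H^{0}(E(-1))$ and $\phi$ are unchanged, $\alpha$ is unchanged, and $j$ becomes $(j,0):A\to U\oplus\kk$. So under the isomorphism of Proposition~\ref{prop: quiver description of Loop2Alg(Xpm) with explicit open condition}, the stabilization map is the $G(A,\phi)$-equivariant linear inclusion $V_{\mp,n,d}\hookrightarrow V_{\mp,n+1,d}$, restricted to the open sets. The open condition is preserved since $\Ker(j,0)=\Ker j$.

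\emph{Step 2: the comparison diagram.} We form the three-row ladder as in the introduction:
\[
V^{\open}_{\mp,n,d}/G(A,\phi)\hookrightarrow V_{\mp,n,d}/G(A,\phi)\to pt/G(A,\phi).
\]
The vertical maps $V_{\mp,n,d}/G\to BG$ are vector bundles, since $V_{\mp,n,d}$ is a linear representation. They are therefore isomorphisms in $\HoG$, and the bottom row is the constant sequence $BG(A,\phi)$. When $\phi^{\vee}=-\phi$ (the case $X_{+}$) this is $BSp(d)$. When $\phi^{\vee}=\phi$ is split (the case $X_{-}$) this is $BO(d)$.

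\emph{Step 3: codimension of the unstable locus.} It remains to show that the complement $Z_{n}=V_{\mp,n,d}\setminus V^{\open}_{\mp,n,d}$ has codimension tending to $\infty$ with $n$. Then the inclusions in the first row are increasingly connected, and by the formal properties of $\HoG$ from \cite{Larson-Vakil-BottPeriodicity} the top row is a well-defined object isomorphic to the second row. I expect this estimate to be the main obstacle.

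To bound the codimension, stratify $Z_{n}$ by the choice of $\alpha$ and an eigenline. For fixed $\alpha$, the points of $Z_{n}$ are those where some nonzero $v$ with $\alpha v=\lambda v$ satisfies $jv=0$, where $\lambda\in\overline{\kk}$ is an eigenvalue. The space of such eigenlines for fixed $\alpha$ has dimension at most $d-1$. Requiring $jv=0$ for a fixed $v$ imposes $n$ independent linear conditions on $j\in\Hom(A,U)$. Hence the fibre of $Z_{n}$ over each $\alpha$ has codimension at least $n-(d-1)$ in $\Hom(A,U)$. Here $\alpha$ ranges over $\{\alpha:\phi\alpha=\alpha^{\vee}\phi\}$, which is independent of $n$.

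This gives $\operatorname{codim} Z_{n}\geq n-d+1\to\infty$. Since $d$ is fixed, this yields the connectivity required in \cite{Larson-Vakil-BottPeriodicity}. Combining the three rows gives $\LoopTwo(X_{+,\infty})\cong BSp(d)$ and $\LoopTwo(X_{-,\infty})\cong BO(d)$ in $\HoG$.

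One subtlety to check: the orthogonal case requires a split form. For even $d$ or over $\overline{\kk}$ this causes no trouble. The identification with $BO(d)$ uses the split orthogonal group, which is homotopy equivalent to $O(d)$ over $\CC$.
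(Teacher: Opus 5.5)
Your proposal is correct and has the same architecture as the paper's proof. It uses the same stabilization $(\alpha,j)\mapsto(\alpha,(j,0))$ with $(A,\phi)$ fixed, and the same ladder $V^{\open}_{\mp,n,d}/G(A,\phi)\hookrightarrow[V_{\mp,n,d}/G(A,\phi)]\to BG(A,\phi)$ with affine-bundle vertical maps. It also makes the same reduction to showing that the unstable locus $Z_n$ has codimension tending to infinity.

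The one genuine difference is the codimension estimate. You fix $\alpha$ and bound the variety of eigenlines crudely by $\dim\PP(A)=d-1$. This gives $\operatorname{codim}Z_n\ge n-d+1$, which suffices here because $d$ is fixed.

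The paper instead works with the incidence variety $\tilde Z\subset\PP(A)\times V_{\mp}\times\AA^{1}$. For fixed $[v]$, the eigenvector condition $B_\alpha(-,v)=\lambda\langle -,v\rangle$ also constrains $\alpha$. It imposes $d$ linear conditions when $B_\alpha$ is symmetric, and $d-1$ when $B_\alpha$ is skew, since then $B_\alpha(v,v)=0$ automatically. These conditions exactly offset the $d$ parameters $([v],\lambda)$, which gives the sharp bounds $n$ and $n-1$, independent of $d$.

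Your argument is shorter and avoids the symmetric/skew case split. The paper's argument buys uniformity in $d$, and that uniformity is reused for the later stabilization $\LoopTwo(X_{\pm,n})\hookrightarrow\Omega^{2}_{d+2,\alg}(X_{\pm,n+2})$. There $n-d$ stays constant, so your bound would no longer force increasing connectivity.
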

Taking topological realizations yields:
\begin{corollary}
The inductive limits of the varieties $\LoopTwo (X_{\pm ,n})$ under
the above inclusions, in the complex analytic topology, have the
homotopy types:
\begin{align*}
\lim_{n\to \infty }\LoopTwo (X_{+,n})&\homotopyeq  BSp(d)\\
\lim_{n\to \infty }\LoopTwo (X_{-,n})&\homotopyeq  BO(d).
\end{align*}
\end{corollary}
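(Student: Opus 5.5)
The plan is to mirror the argument sketched for $\LoopTwo(BU)$, working through the three-row diagram in $\HoG$. First I will construct the stabilization maps $\LoopTwo(X_{\pm,n})\hookrightarrow\LoopTwo(X_{\pm,n+1})$. Geometrically, I use the inclusion $X_{\pm,n}\hookrightarrow X_{\pm,n+1}$ induced by $U\mapsto U\oplus\kk$, sending a quotient $f:U\oplus U^{\vee}\to L$ to $f\oplus\Id_{\kk}$, which preserves the base point. On the quiver side of Proposition~\ref{prop: quiver description of Loop2Alg(Xpm) with explicit open condition} this is simply $(\alpha,j)\mapsto(\alpha,(j,0))\in\Hom(A,A)\oplus\Hom(A,U\oplus\kk)$, with $(A,\phi)$ and $G(A,\phi)$ unchanged. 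I will check compatibility by noting that the Str\o mme description of $E\oplus\OO$ adds nothing to $A=H^{0}(E(-1))$, so $\phi$ is unchanged. The open condition $\Ker(\alpha-\lambda)\cap\Ker(j)=0$ is visibly preserved.

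Next I will set up the diagram with top row $V^{\open}_{\mp,n,d}/G(A,\phi)$, middle row the stack quotients $[V_{\mp,n,d}/G(A,\phi)]$, and bottom row the constant sequence $pt/G(A,\phi)$. Here $G(A,\phi)=Sp(A)$ for the $+$ case (form alternating, $\phi^\vee=-\phi$) and $O(A)$ split for the $-$ case. The middle-to-bottom maps are vector bundles, since $V_{\mp,n,d}$ is a linear representation, so they are infinitely connected in the sense of \cite{Larson-Vakil-BottPeriodicity}. The bottom row is $BSp(d)$, respectively $BO(d)$, in $\HoG$, and its topological realization is $BSp(d)$, respectively $BO(d)$ (via $Sp_{d}(\CC)\simeq Sp(d/2)$ and $O_{d}(\CC)\simeq O(d)$). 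It then remains to show the top-to-middle inclusions are increasingly connected as $n\to\infty$. By the purity/codimension criterion of \cite{Larson-Vakil-BottPeriodicity}, it suffices to show that $\operatorname{codim}(V_{\mp,n,d}\setminus V^{\open}_{\mp,n,d})\to\infty$ as $n\to\infty$. This will also show that the top row is a legitimate object of $\HoG$, being isomorphic to a constant one.

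The main obstacle is this codimension estimate. I would bound the complement by a union over $\lambda$ and over lines $\ell\subset A$: the bad locus is the set of $(\alpha,j)$ with some eigenvector $v$ of $\alpha$ with $j(v)=0$. I will parametrize by $(\lambda,[v])\in\AA^{1}\times\PP(A)$, contributing at most $d$ dimensions. For fixed $(\lambda,v)$, the condition $j(v)=0$ imposes $n$ independent linear conditions on $j$, independently of $\alpha$. The condition $\alpha v=\lambda v$ only lowers the dimension further; I will just use that it is a linear condition on $\alpha$ in the $\phi$-self-adjoint space, of codimension $\ge 0$. Hence the codimension is at least $n-d$, which tends to infinity with $n$ for fixed $d$. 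Combining these steps gives the isomorphisms of the proposition in $\HoG$.
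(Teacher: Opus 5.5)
Your proposal is correct and follows essentially the paper's argument: the same stabilization $(\alpha,j)\mapsto(\alpha,(j,0))$ with $(A,\phi)$ fixed, the same diagram of the quotients $V^{\open}_{\mp,n,d}/G(A,\phi)$ over the stacks $[V_{\mp,n,d}/G(A,\phi)]$, affine bundles to $BG(A,\phi)$, and a codimension estimate for the unstable locus that grows with $n$. The one difference is that you discard the eigenvector condition and get $\operatorname{codim}\ge n-d$; the paper instead uses the map $L_v$ to show that $B_\alpha(-,v)=\lambda\langle -,v\rangle$ imposes $d$ further conditions when $\phi$ is symmetric and $d-1$ when it is alternating, giving codimension $n$, resp.\ $n-1$, independent of $d$. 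Your cruder bound suffices here because $d$ is fixed, but it would not work for the later stabilization $(n,d)\mapsto(n+2,d+2)$, since $n-d$ stays constant there.
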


Finally, we have
\begin{proposition}\label{prop: Loop2inftyX+=BSp in HoG and similar for X-}
There is a sequence of inclusions
\[
\dotsb \hookrightarrow \LoopTwo (X_{\pm ,n})
\hookrightarrow \Omega^{2}_{d+2,\alg} (X_{\pm ,n+2}) \hookrightarrow \dotsb 
\]
which give well-defined objects in $\HoG$ which we denote by
$\Omega^{2}_{\infty ,\alg}(X_{\pm ,\infty})$. Moreover, there are
isomorphisms
\begin{align*}
\Omega^{2}_{\infty ,\alg}(X_{+,\infty})&\cong BSp\\
\Omega^{2}_{\infty ,\alg}(X_{-,\infty})&\cong BO
\end{align*}
in $\HoG$.
\end{proposition}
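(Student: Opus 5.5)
The plan is to construct the stabilization maps on the quiver side, check that they correspond to natural maps of based algebraic loop spaces, and then compare with the stack quotients $[V_{\mp,n,d}/G(A,\phi)]$, which are affine bundles over $BG(A,\phi)$.

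\textbf{Constructing the inclusions.} On the level of $X_{\pm,n}$, replace $U$ by $U'=U\oplus \kk^{2}$. Use the standard embedding of $X_{\pm,n}$ into $X_{\pm,n+2}$ obtained by direct sum with the base point of $X_{\pm,2}$. Then replace the loop $f$ by $f\oplus g_{0}$, where $g_{0}$ is a fixed based degree $2$ loop in $X_{\pm,2}$.

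The degree jump of $2$ is forced in the $+$ case, since $d$ must be even. We choose $g_{0}$ so that it corresponds, via Proposition~\ref{prop: quiver description of Loop2Alg(Xpm) with explicit open condition}, to a fixed stable datum $(A_{0},\phi_{0},\alpha_{0},j_{0})$ with $\dim A_{0}=2$ and $\dim U_{0}=2$. We take $\phi_{0}$ to be a hyperbolic (split symmetric) or symplectic plane. A concrete choice is $\alpha_{0}$ a $\phi_0$-self-adjoint operator with distinct eigenvalues and $j_{0}$ an isomorphism, which makes the open condition automatic.

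On the quiver side the map is then
\[
(A,\phi,\alpha,j)\mapsto (A\oplus A_{0},\ \phi\oplus\phi_{0},\ \alpha\oplus\alpha_{0},\ j\oplus j_{0}).
\]
It is equivariant for $G(A,\phi)\hookrightarrow G(A\oplus A_{0},\phi\oplus\phi_{0})$ and preserves the open condition, because kernels split along the direct sum. It also commutes with the sheaf description, since direct sums of the self-dual sequences \eqref{eqn: Evee-->U+Uvee-->E} correspond to direct sums of quiver data.

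\textbf{Identifying the limit.} As in the proof of Proposition~\ref{prop: Loop2(X+,inrty)=BSp(d) in HoG and same for X-}, we form a ladder. The top row is $V^{\open}_{\mp,n,d}/G$. The middle row is the stack $[V_{\mp,n,d}/G(A,\phi)]$. The bottom row is $BG(A_d,\phi_d)$, with bonding maps $BSp(d)\to BSp(d+2)$, respectively $BO(d)\to BO(d+2)$, induced by the block inclusion.

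The middle-to-bottom maps are vector bundles, hence affine bundles. To compare the middle rows along the stabilization we factor it in two steps:
\begin{itemize}
\item first increase $n$ by the fixed $j_{0}$-summand, which is an affine bundle step over the same base;
\item then observe that the middle row is pulled back along $B G_d\to BG_{d+2}$, up to the affine bundle of extension data.
\end{itemize}
So the middle row is isomorphic in $\HoG$ to the sequence $\dotsb\to BSp(d)\to BSp(d+2)\to\dotsb$ (respectively $BO$). Using finite-dimensional approximations, this sequence is by definition the object $BSp$ (respectively $BO$), since $BSp=\lim BSp(d)$ and $BO$ is cofinal along even $d$.

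\textbf{Main obstacle: connectivity of the open inclusions.} We must show that $V^{\open}_{\mp,n,d}\subset V_{\mp,n,d}$ has complement of codimension tending to infinity as $n,d\to\infty$ along the sequence, and in particular as $n$ grows faster than needed. We will do this by making $n$ increase by $2$ while $d$ increases by $2$, then refining to a cofinal subsequence where $n\gg d$, which is allowed because the sequence of Proposition~\ref{prop: Loop2(X+,inrty)=BSp(d) in HoG and same for X-} already gives isomorphisms in $n$.

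The estimate goes as follows. The bad locus consists of pairs such that some eigenvector $v$ of $\alpha$ satisfies $j(v)=0$. Fix $\lambda$ and a line $\langle v\rangle$; the set of $\alpha$ with $\alpha v=\lambda v$ is a linear condition. Requiring $j(v)=0$ then cuts out $n$ further conditions. Parameterizing $(\lambda,[v])$ costs at most $d$ dimensions, so the complement has codimension at least $n-d$. Because of the cofinality trick, this quantity tends to infinity.

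We will then invoke the Larson–Vakil criterion: open inclusions with complement of codimension $c$ are $(2c-1)$-connected. Finally, we must check that the diagonal stabilization is compatible with these identifications, so that the resulting isomorphisms in $\HoG$ are well-defined. Verifying this compatibility is the step I expect to require the most care.
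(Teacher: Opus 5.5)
Your stabilization maps (direct sum with a fixed datum $(A_0,\phi_0,\alpha_0,j_0)$ with $j_0$ an isomorphism) and the comparison of the stack quotients with $BG_{\mp}(d)$ via affine bundles are essentially what the paper does. However, the step you flag as the main obstacle has a genuine gap.

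Your bound $\operatorname{codim}\geq n-d$ counts only the $n$ conditions coming from $j(v)=0$. It treats $\alpha v=\lambda v$ as costing nothing, so the $d$ parameters of $([v],\lambda)$ are subtracted without compensation. Along the displayed sequence $(n,d)\mapsto(n+2,d+2)$ the quantity $n-d$ is constant, so your bound never grows. Moreover, no cofinal subsequence of that sequence has $n\gg d$. Interleaving the $n$-increasing maps of the previous proposition would give a \emph{different} sequence. That does not show the sequence in the statement is an object of $\HoG$, which requires the vertical open inclusions along that very sequence to be increasingly connected.

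The missing idea is that the eigenvector condition is itself highly constraining. Write a self-adjoint $\alpha$ as the form $B_\alpha(v,w)=\langle v,\alpha w\rangle$; it lies in $\Sym^2A^\vee$ for symmetric $\phi$ and in $\Lambda^2A^\vee$ for skew $\phi$. The condition $\alpha v=\lambda v$ then reads $B_\alpha(-,v)=\lambda\langle -,v\rangle$. For fixed $v\neq 0$, the linear map $(B,\lambda)\mapsto B(-,v)-\lambda\langle -,v\rangle$ is onto $A^\vee$ in the symmetric case. In the skew case it is onto $\ker(\mathrm{ev}_v)$, because $B(v,v)=\langle v,v\rangle=0$. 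Hence the incidence variety $\tilde Z\subset\PP(A)\times V_{\mp}\times\AA^1$ is a subbundle over $\PP(A)$ of corank $n+d$, resp.\ $n+d-1$. Its image $Z\subset V_\mp$ therefore has codimension at least $n$, resp.\ $n-1$, independently of $d$. With this estimate the diagonal sequence works as stated and no cofinality trick is needed.

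Two smaller points. First, the increasing connectivity of the bottom-row maps $BG_{\mp}(d)\to BG_{\mp}(d+2)$ is not automatic ``by definition'' in $\HoG$; it has to be proved. The paper identifies $BG_\mp(d)\cong[Y_\mp/G_\mp(d+2)]$, where $Y_\mp$ is the variety of symplectic, resp.\ hyperbolic, pairs in the $(d+2)$-dimensional space; transitivity uses Witt's theorem in the orthogonal case. In the symplectic case it then factors $[Y_-/G_-(d+2)]\to BG_-(d+2)$ as an affine bundle over $[(A\setminus 0)/G_-(d+2)]$, followed by an open inclusion whose complement has codimension $d+2$, followed by another affine bundle.

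Second, for $X_{+,n}=\OGr(n,2n)$, where $\phi_0$ is symplectic, a $\phi_0$-self-adjoint operator on a plane is a scalar, since $\phi_0\alpha_0$ lies in the one-dimensional space $\Lambda^2A_0^\vee$. So an $\alpha_0$ with distinct eigenvalues does not exist there. This is harmless, because $j_0$ being an isomorphism already forces stability; simply take $\alpha_0=0$ as the paper does.
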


Again by Theorem~\ref{thm: Mann-Milgram algebraic loop approximation
holds for generalized flag varieties} in the limit as $d\to \infty$
the algebraic double loop spaces of $X_{\pm,n}$ are homotopy
equivalent to the topological double loop spaces. Then since 
\[
\lim_{n\to \infty}X_{+,n} = O/U, \quad \lim_{n\to \infty}X_{-,n} = Sp/U, 
\]
we have
\begin{align*}
\lim_{d,n\to \infty} \LoopTwo (X_{+,n}) & \homotopyeq \LoopTwoTop (O/U),\\
\lim_{d,n\to \infty} \LoopTwo (X_{-,n}) & \homotopyeq \LoopTwoTop (Sp/U)
\end{align*}
where on the right-hand side, $d$ is any fixed value (they are all
homotopy equivalent).

Consequently, after taking the topological realizations of the
isomorphisms in the proposition, we recover two of the real Bott
periodicity homotopy equivalences:
\begin{corollary}
\begin{align*}
\LoopTwoTop (O/U)& \homotopyeq BSp,\\
\LoopTwoTop (Sp/U)& \homotopyeq BO.
\end{align*}
\end{corollary}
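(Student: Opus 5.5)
The corollary follows from three inputs: Proposition~\ref{prop: Loop2inftyX+=BSp in HoG and similar for X-}, the topological realization functor $\top:\HoG_{\CC}\to\HoTop$, and Theorem~\ref{thm: Mann-Milgram algebraic loop approximation holds for generalized flag varieties}. I treat the orthogonal case $X_{+,n}=\OGr(n,2n)$; the Lagrangian case is identical with $BO$ in place of $BSp$.

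\textbf{Step 1 (algebraic side).} Work over $\CC$. Proposition~\ref{prop: Loop2inftyX+=BSp in HoG and similar for X-} gives an isomorphism $\Omega^{2}_{\infty,\alg}(X_{+,\infty})\cong BSp$ in $\HoG_{\CC}$. Applying $\top$ gives a homotopy equivalence
\[
\lim_{d,n\to\infty}\LoopTwo(X_{+,n})\homotopyeq BSp,
\]
where the colimit is taken along the stabilization maps $\LoopTwo(X_{+,n})\hookrightarrow\Omega^{2}_{d+2,\alg}(X_{+,n+2})$. I also use that the topological realization of an object of $\HoG$ is the direct limit of the analytic spaces.

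\textbf{Step 2 (identify the colimit with $\LoopTwoTop(O/U)$).} The stabilization maps should be compatible with the inclusions $i:\LoopTwo(X_{+,n})\hookrightarrow\Omega^{2}_{d+2,\top}(X_{+,n+2})$. By construction they are built from the standard inclusion $X_{+,n}\hookrightarrow X_{+,n+2}$ together with a degree-raising operation, such as adding a fixed based loop in a complementary block. Up to homotopy this matches, on topological double loops, composition with the inclusion followed by loop-sum with a fixed degree-$2$ class. Since based double loop spaces of different degrees are homotopy equivalent via loop-sum, I obtain a map of direct systems
\[
\lim_{d,n}\LoopTwo(X_{+,n})\to\lim_{n}\NoDLoopTwoTop(X_{+,n})_{d}.
\]

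Two facts make this map a weak equivalence.

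\begin{itemize}
\item By Theorem~\ref{thm: Mann-Milgram algebraic loop approximation holds for generalized flag varieties}, each stage induces isomorphisms on $\pi_k$ for $k<B(d,X_{+,n})$, and $B\to\infty$ as $d\to\infty$. Homotopy groups of a sequential colimit of inclusions are the colimits of the homotopy groups, so the map is a weak equivalence.
\item On the topological side, $\lim_n\Omega^2(X_{+,n})\simeq\Omega^2(\lim_n X_{+,n})=\Omega^2(O/U)$, because $S^2$ is compact and so based maps from it land in a finite stage.
\end{itemize}

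Everything has the homotopy type of a CW complex, so Whitehead upgrades the weak equivalence to a homotopy equivalence. Composing with Step 1 gives $\LoopTwoTop(O/U)\homotopyeq BSp$.

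\textbf{Main obstacle.} The key difficulty is the compatibility in Step 2. The algebraic stabilization raising $d$ must realize, up to homotopy, a map that is a homotopy equivalence between components of the topological double loop space. Only then do the stagewise Mann--Milgram isomorphisms assemble into an isomorphism on the colimit. My first attempt would be to check that the degree-raising map is homotopic to loop-sum with a fixed degree-$2$ loop. Failing that, I would argue directly that on $\pi_k$ it induces the identity under the identification of every component with the degree-zero one. Connected components ($\pi_0$) are handled the same way, since both sides are connected for the relevant degrees.
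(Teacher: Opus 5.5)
Your proposal is correct and follows the paper's route. You realize the $\HoG$ isomorphisms $\Omega^{2}_{\infty,\alg}(X_{+,\infty})\cong BSp$ and $\Omega^{2}_{\infty,\alg}(X_{-,\infty})\cong BO$ topologically, then identify the colimits with $\LoopTwoTop(O/U)$ and $\LoopTwoTop(Sp/U)$ using the Boyer--Mann--Hurtubise--Milgram theorem and $\lim_{n} X_{+,n}=O/U$, $\lim_{n} X_{-,n}=Sp/U$.

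The compatibility you flag as the main obstacle is left implicit in the paper, and your first plan closes it. The stabilization is $f\mapsto \oplus\circ(f,g_{0})$, where $\oplus\colon X_{\pm,n}\times X_{\pm,2}\to X_{\pm,n+2}$ is the direct-sum map and $g_{0}$ is a fixed degree-$2$ based map. The reparametrization homotopy $(f,g_{0})\simeq(f,\ast)\cdot(\ast,g_{0})$ in $\Omega^{2}(X_{\pm,n}\times X_{\pm,2})$ then shows this is homotopic to the inclusion followed by loop-sum with the fixed loop $\oplus\circ(\ast,g_{0})$.
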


\subsection{Proof of Proposition~\ref{prop: Loop2(BU)=BGLd in HoG}}

The inclusions 
of the proposition arise from applying $\LoopTwo (-)$ to
the natural inclusion
\[
\Gr (n,n+N)\hookrightarrow \Gr (n+1,n+N+2). 
\]
On the level of bundles on $\PP^{1}$, this corresponds to 
\[
\left\{\UL{U}\oplus \UL{W} \xrightarrow{(f_{U},f_{W})}E
\right\}\mapsto \left\{\UL{U}\oplus \OO \oplus \UL{W}\oplus \OO
\xrightarrow{(f_{U}\oplus \Id ,f_{W}\oplus 0)}E\oplus \OO
\right\}
\]
and on the level of quivers is given by
\[
(A,U,W,(\alpha ,j,\gamma ))\mapsto (A,U\oplus \kk ,W\oplus \kk ,(\alpha ,(j,0),(\gamma ,0))).
\]
We thus get a diagram

\begin{equation} \label{eqn: HoG diagram for Loop2(d,BU)}
\begin{tikzcd}
    \cdots \arrow[r, hook]
    & V^{\stable}_{n,N,d}/GL_{d} \arrow[r, hook] \arrow[d, hook]
    & V^{\stable}_{n+1,N+1,d}/GL_{d}  \arrow[r, hook] \arrow[d, hook] & \cdots \\
    \cdots \arrow[r, hook]
    & \left[V_{n,N,d}/GL_{d} \right] \arrow[r, hook]
    & \left[V_{n+1,N+1,d}/GL_{d} \right] 
\arrow[r, hook] & \cdots
\end{tikzcd}
\end{equation}
where by Theorem~\ref{thm: quiver description of Loop2alg(Gr(n,n+N))},
the top row is the sequence in the proposition and the bottom row is
the inclusion of smooth stack quotients. By Larson-Vakil
\cite[Definition 3.1]{Larson-Vakil-BottPeriodicity}, this diagram
gives an object in $\HoG$ provided that we can show the vertical open
inclusions are of increasing connectivity (the other requirements are
immediate). This follows from the following
\begin{lemma}\label{lem: codimension of unstable locus for the Gr quiver}
The complement of the open embedding
$V^{\stable}_{n,N,d}\hookrightarrow V_{n,N,d}$ has codimension $\min
(n,N)$ and thus the embedding is $\min(n,N)-1$ \emph{connected} (in
the sense of \cite[Definition~2.1]{Larson-Vakil-BottPeriodicity}).
\end{lemma}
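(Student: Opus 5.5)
The complement $V_{n,N,d}\setminus V^{\stable}_{n,N,d}$ is the union $Z_{1}\cup Z_{2}$. Here $Z_{1}$ is the locus where condition (1) of Proposition~\ref{prop: quiver description of Loop2(Gr(n,n+N)) with explicit stability condition} fails and $Z_{2}$ is the locus where condition (2) fails. The plan is to show $\operatorname{codim} Z_{1}=n$ and $\operatorname{codim} Z_{2}=N$, whence the union has codimension $\min(n,N)$. By Remark~\ref{rem: duality induced by Gr(n,n+N)=Gr(N,n+N)}, the linear isomorphism $(\alpha,j,\gamma)\mapsto(\alpha^{\vee},\gamma^{\vee},j^{\vee})$ from $V_{n,N,d}$ to $V_{N,n,d}$ carries $Z_{2}$ onto the $Z_{1}$-locus with the roles of $n$ and $N$ swapped. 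So it suffices to treat $Z_{1}$, and the codimension statement for $Z_{2}$ follows with $n$ replaced by $N$. Codimension can be computed after base change to $\overline{\kk}$, so we work over an algebraically closed field.

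First we reformulate $Z_{1}$. The point $(\alpha,j,\gamma)$ lies in $Z_{1}$ if and only if there exist $\lambda\in\overline{\kk}$ and a nonzero $v\in A$ with $\alpha v=\lambda v$ and $jv=0$. To exhibit $Z_{1}$ as a projection, consider the incidence variety
\[
I=\{(\lambda,[v],\alpha,j,\gamma): \alpha v=\lambda v,\ jv=0\}\subset \AA^{1}\times\PP(A)\times V_{n,N,d}.
\]
Its projection to $\AA^{1}\times\PP(A)$ is a vector bundle. For fixed $(\lambda,[v])$, the conditions $\alpha v=\lambda v$ and $jv=0$ are $d+n$ independent linear conditions, because $v\neq 0$. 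Hence
\[
\dim I = 1+(d-1)+\dim V_{n,N,d}-d-n=\dim V_{n,N,d}-n,
\]
and $I$ is irreducible. Since $Z_{1}$ is the image of $I$ under a proper-in-the-$\PP(A)$-direction projection, $Z_{1}$ is closed and $\dim Z_{1}\le \dim V-n$. The $\AA^1$ factor causes no trouble for closedness: $\lambda$ is determined by $\alpha$ and $v$, and the fibres over $\lambda$ are bounded by the eigenvalues of $\alpha$.

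For the reverse inequality we show that $I\to Z_{1}$ is generically finite. It suffices to produce one point of $I$ whose fibre is finite, by upper semicontinuity of fibre dimension on the irreducible $I$. Take $\alpha$ diagonal with distinct eigenvalues and eigenvectors $e_{1},\dots,e_{d}$, and take $j$ with $je_{1}=0$ and $je_{k}\neq 0$ for $k\ge 2$; this is possible as long as $n\ge 1$. Then $(\lambda,[v])=(\lambda_{1},[e_{1}])$ is the unique preimage. Hence $\operatorname{codim}Z_{1}=n$ exactly, and symmetrically $\operatorname{codim}Z_{2}=N$. The degenerate cases $n=0$ or $N=0$ must be checked separately: there the codimension is $0$ and the statement is vacuous.

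Finally, the connectivity statement follows from the general fact used in \cite{Larson-Vakil-BottPeriodicity}: the inclusion of the complement of a closed subset of codimension $c$ in a smooth variety is $(c-1)$-connected in their sense. This is essentially their Definition~2.1 together with the accompanying basic lemma, which we would cite. The same fact applies to the $GL_{d}$-quotients appearing in diagram~\eqref{eqn: HoG diagram for Loop2(d,BU)}.

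I expect the main obstacle to be the equality of codimensions, as opposed to the inequality. Specifically, one must make sure the incidence argument gives the exact codimension of the union and not merely a bound. The inequality $\operatorname{codim}\ge\min(n,N)$ is what the connectivity claim actually needs, so it is the essential part. The generic finiteness example above is what I would try first to settle equality.
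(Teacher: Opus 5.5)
Your argument is correct, but it takes a different route from the paper's. The paper works on the sheaf side. It treats the failure of (2), i.e.\ non-surjectivity of $\UL{U}\oplus\UL{W}\to E$, and asserts that a generic such map has $E$ locally free with cokernel $\OO_p$. It then fibres this stratum over $V^{\stable}_{n,N,d-1}/GL_{d-1}$ by passing to the image $E'$, and counts the fibre as $\dim\PP(\Ext^1(\OO_p,E'))+1=n$. This gives codimension $d(n+N)-n-(d-1)(n+N)=N$, and the value $n$ for (1) follows by duality. You instead stay on the quiver side with an incidence variety over $\AA^1\times\PP(A)$, which is essentially the method the paper itself uses later for $V_{\mp,n,d}$ in the Lagrangian/orthogonal case. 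You also reverse the roles, doing (1) directly and getting (2) by duality.

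What each buys: the paper's stratum count gives a geometric reason for the number $N$ (a point $p$ plus a line of extension classes at $p$), and it exhibits a stratum of exactly that codimension. However, its upper bound on $\dim Z_2$ rests on an unproved assertion: that the strata where $E$ fails to be locally free, or where the cokernel has length $>1$, are smaller. That upper bound is the direction the connectivity claim actually needs. Your incidence count gives it rigorously and uniformly. Your explicit point (distinct eigenvalues, $j$ killing exactly one eigenvector) then supplies the matching lower bound via the fibre-dimension inequality.

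Two small repairs are needed. First, for fixed $(\lambda,[v])$ the condition $\alpha v=\lambda v$ is inhomogeneous in $\alpha$. So $I$ is an affine bundle over $\AA^1\times\PP(A)$, or a vector bundle over $\PP(A)$ if $\lambda$ is put in the fibre; the count is unchanged. Second, argue closedness of $Z_1$ by noting that $\lambda$ is a regular function of $([v],\alpha)$ on $I$. Then $I$ maps isomorphically onto the closed subset $\{([v],\alpha,j,\gamma):\alpha v\in\kk v,\ jv=0\}$ of $\PP(A)\times V_{n,N,d}$, whose projection to $V_{n,N,d}$ is proper. Alternatively, closedness follows directly from the openness of condition (1), already established in Section 2.
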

\begin{proof}
We wish to show that the unstable locus has codimension
$\min(n,N)$. The unstable locus has two components given by the
negation of conditions (1) and (2) in Proposition~\ref{prop: quiver
description of Loop2(Gr(n,n+N)) with explicit stability condition}.

Recall that geometrically, the stack $\left[V_{n,N,d}/GL_{d} \right]$
parameterizes sequences
\[
\left\{\UL{U}\oplus \UL{W}\to E \right\}.
\]
Failure of condition (1) corresponds to the locus where the sheaf $E$
is not locally free. Failure of condition (2) corresponds to the locus
where the map is not surjective. These two conditions are exchanged
under dualizing the quiver, or geometrically, taking the (derived)
dual of the sequence 
\[
F\to \UL{U}\oplus \UL{W}\to E.
\]
Recall that 
\[
\dim (V^{\stable}_{n,N,d}/GL_{d}) = \dim (\left[V_{n,N,d}/GL_{d}
\right]) =  (n+N)d.
\]
We compute the dimension of the substack parameterizing the locus of
sequences 
\[
\left\{\UL{U}\oplus \UL{W}\xrightarrow{f}E \right\}
\]
where $f$ fails to be surjective. Since $f|_{p_{\infty}}$ is
surjective, the cokernel of $f$ is a 0-dimensional sheaf and so
generically, this locus parameterizes such maps where $E$ is locally
free and $\coker f \cong \OO_{p}$ for some $p\in \PP^{1}-p_{\infty}$.

This generic locus admits a map to
$V^{\stable}_{n,N,d-1}/GL_{d-1}$ given by 
\[
\left\{\UL{U}\oplus \UL{W}\xrightarrow{f}E \right\}\mapsto
\left\{\UL{U}\oplus \UL{W}\xrightarrow{f' }E' \right\}
\]
where $E'\subset E$ is the image of $f$. The fibers of this map are
stacks parameterizing extensions
\[
0\to E' \to E\to \OO_{p}\to 0.
\]
These fibers thus have dimension
\[
\dim (\PP (\Ext^{1}(\OO_{p},E'))) +\dim  (\PP^{1}-p_{\infty}) =
(n-1)+1 = n.
\]
Therefore the dimension of the locus where condition (2) fails is 
\[
n+(d-1)(N+n)
\]
and so its codimension in $\left[V_{n,N,d}/GL_{d} \right]$ is 
\[
d(N+n) - n -(d-1)(N+n) = N.
\]
Then by duality, the locus where condition (1) fails has codimension
$n$. The lemma follows.
\end{proof}

We have thus proven that $\LoopTwo (BU)$ is a well-defined object in
$\HoG$. The isomorphism $\LoopTwo (BU)\cong BGL_{d}$ follows
immediately since the bottom row of Equation~\eqref{eqn: HoG diagram
for Loop2(d,BU)} is isomorphic to $BGL_{d}$ via the affine bundle maps
\[
\left[V_{n,N,d}/GL_{d} \right] \to BGL_{d}.
\]

\subsection{Proof of Proposition~\ref{prop: Loop2inft(BU)=BGL in HoG}}

The inclusions 
\[
\LoopTwo (\Gr (n,n+N))\hookrightarrow
\Omega^{2}_{d+1,\alg}(\Gr (n+1,n+N+2))
\]
of Proposition~\ref{prop:
Loop2inft(BU)=BGL in HoG} are defined geometrically by

\[
\left\{\UL{U}\oplus \UL{W}\xrightarrow{(f_{U},f_{W})}E \right\}\mapsto
\left\{\UL{U}\oplus \OO \oplus \UL{W}\oplus \OO
\xrightarrow{\begin{psmallmatrix}f_{U}&0&f_{W}&0\\ 0&\UL{x}_{0}&0& \UL{x}_{1}   \end{psmallmatrix}
}E\oplus \OO (1) \right\}
\]
which in terms of quivers is given by
\[
(A,U,W,(\alpha ,j,\gamma))\mapsto (A,U,W,(\alpha ,j, \gamma ))\oplus
(\kk ,\kk ,\kk ,(0,1,1)). 
\]
Thus we get a diagram
\[
\begin{tikzcd}
    \cdots \arrow[r, hook]
    & V^{\stable}_{n,N,d}/GL_{d} \arrow[r, hook] \arrow[d, hook]
    & V^{\stable}_{n+1,N+1,d+1}/GL_{d+1}  \arrow[r, hook] \arrow[d, hook] & \cdots \\
    \cdots \arrow[r]
    & \left[V_{n,N,d}/GL_{d} \right] \arrow[r]
    & \left[V_{n+1,N+1,d+1}/GL_{d+1} \right] 
\arrow[r] & \cdots
\end{tikzcd}
\]
To show this gives a well-defined object in $\HoG$ we need to show the
vertical open inclusions have increasing connectivity and also that
the inclusions on the bottom row have increasing connectivity. The
former follows from Lemma~\ref{lem: codimension of unstable locus for
the Gr quiver} and the latter follows from
\cite[Prop~2.2]{Larson-Vakil-BottPeriodicity} and the fact that
$\left[V_{n,N,d}/GL_{d} \right] \to BGL_{d}$ is
$\infty$-connected.

This then shows that the object $\Omega^{2}_{\infty ,\alg}(BU)$ is
isomorphic to $BGL$ in $\HoG$ since the bottom row of the diagram may
be replaced with 
\[
\dotsb \hookrightarrow BGL_{d}\hookrightarrow BGL_{d+1}\hookrightarrow \dotsb .
\]

\subsection{Proof of Proposition~\ref{prop: Loop2(X+,inrty)=BSp(d) in HoG and same
for X-}}

The inclusions in the proposition arise from applying $\LoopTwo (-)$
to the natural inclusions $X_{\mp ,n}\hookrightarrow X_{\mp ,n+1}$. In
terms of sequences on $\PP^{1}$, this corresponds to 
\[
\left\{\UL{U}\oplus \UL{U}^{\vee}\xrightarrow{(f_{U},f_{U^{\vee}})}E
\right\} \mapsto \left\{\UL{U}\oplus \OO \oplus \UL{U}^{\vee}\oplus
\OO  \xrightarrow{(f_{U},\Id ,f_{U^{\vee}},0)}E\oplus \OO  \right\}.
\]
In terms of quivers, this is given by
\[
\left\{(A,\phi ),U,(\alpha ,j) \right\}\mapsto \left\{(A,\phi
),U\oplus \kk ,(\alpha ,(j,0)) \right\}.
\]
Thus we get a diagram
\begin{equation}\label{diag: diagram defining Loop(X+infty) in HoG}
\begin{tikzcd}
    \cdots \arrow[r, hook]
    & V^{\stable}_{\mp ,n,d}/G(A,\phi ) \arrow[r, hook] \arrow[d, hook]
    & V^{\stable}_{\mp ,n+1,d}/G(A,\phi )  \arrow[r, hook] \arrow[d, hook] & \cdots \\
    \cdots \arrow[r, hook]
    & \left[V_{\mp ,n,d}/G(A,\phi ) \right] \arrow[r, hook]
    & \left[V_{\mp, n+1,d}/G(A,\phi ) \right] 
\arrow[r, hook] & \cdots
\end{tikzcd}
\end{equation}
As in the proof of Proposition~\ref{prop: Loop2(BU)=BGLd in HoG}, we
need only show the vertical inclusions are open embeddings of
increasing connectivity. This follows from the following

\begin{lemma}\label{lem: the complement of Vstable in V in the Sp/O
case has codim n} The complement of the open embedding $V^{\stable}_{+
,n,d}\subset  V_{+ ,n,d}$ has codimension $n$ and the
complement of the open embedding $V^{\stable}_{- ,n,d}\subset 
V_{- ,n,d}$ has codimension $n-1$.
Consequently, the open embeddings  $V^{\stable}_{\mp  ,n,d}\hookrightarrow
V_{\mp  ,n,d}$ are $(n-2)$-connected. 
\end{lemma}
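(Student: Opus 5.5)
The plan is to prove the codimension statement by a direct linear-algebra dimension count on the unstable locus. The connectivity statement should then follow exactly as in Lemma~\ref{lem: codimension of unstable locus for the Gr quiver}, by \cite[Definition~2.1]{Larson-Vakil-BottPeriodicity}. By Proposition~\ref{prop: quiver description of Loop2Alg(Xpm) with explicit open condition} and the identification $V^{\open}_{\mp,n,d}=V^{\stable}_{\mp,n,d}$, the complement $Z\subset V_{\mp,n,d}$ is the set of $(\alpha,j)$ for which there exist $\lambda\in\overline{\kk}$ and $v\neq 0$ with $\alpha v=\lambda v$ and $jv=0$. Note the indexing: $V_{+,n,d}$ has $\phi^\vee=-\phi$ in the paper's convention $\phi^\vee=\mp\phi$. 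I will treat the two parities of $\phi$ separately, calling them the symmetric case ($\phi^\vee=\phi$) and the alternating case ($\phi^\vee=-\phi$). Under $\phi$, the $\phi$-self-adjoint endomorphisms $\alpha$ correspond to bilinear forms $\phi\alpha$ that are symmetric, resp.\ alternating. So $V_{\mp,n,d}$ is a vector space of dimension $nd+d(d+1)/2$, resp.\ $nd+d(d-1)/2$. This is consistent with the dimensions $(n+1)d$ and $(n-1)d$ of the quotients.

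Next I would introduce the incidence variety
\[
I=\{(\lambda,[v],\alpha,j)\;:\;\alpha v=\lambda v,\ jv=0\}\subset \AA^1\times\PP(A)\times V_{\mp,n,d}.
\]
Its image in $V_{\mp,n,d}$ is exactly $Z$. The projection $I\to Z$ is generically finite: a generic point of $Z$ has a unique such eigenline. Hence $\dim Z=\dim I$, and even without generic finiteness we would get $\operatorname{codim} Z\ge \operatorname{codim}$ computed below, which is what the connectivity statement needs. To compute $\dim I$, I would fiber it over $(\lambda,[v])\in\AA^1\times\PP(A)$, a base of dimension $d$. The fiber is a linear subspace of $V_{\mp,n,d}$. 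The condition $jv=0$ cuts $n$ independent linear conditions on $\Hom(A,U)$. The condition $\alpha v=\lambda v$ is equivalent to $(\phi\alpha)(v,\cdot)=\lambda\,\phi(v,\cdot)$, an affine-linear condition on the form $\phi\alpha$.

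The key step, which I expect to be the only real point, is counting how many of these conditions are independent. In the symmetric case the map $B\mapsto B(v,\cdot)$ from symmetric forms to $A^\vee$ is surjective. So the condition imposes $d$ conditions, the fiber has codimension $n+d$, and $\operatorname{codim} Z=n+d-d=n$. In the alternating case $B(v,\cdot)$ always lies in the annihilator $v^\perp\subset A^\vee$, which has dimension $d-1$, and $B\mapsto B(v,\cdot)$ surjects onto it. The target $\lambda\phi(v,\cdot)$ also lies in $v^\perp$ because $\phi(v,v)=0$, so the conditions are consistent and number exactly $d-1$. This gives $\operatorname{codim} Z=n+(d-1)-d=n-1$. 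The same computation is valid for isotropic $v$ in the symmetric case, since surjectivity does not depend on whether $\phi(v,v)$ vanishes. Thus the eigenline may range over all of $\PP(A)$ in both cases.

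Finally I would pass to the quotient stacks, noting that codimension is preserved under the $G(A,\phi)$-quotient. A codimension-$c$ complement makes the open embedding $(c-1)$-connected, as in Lemma~\ref{lem: codimension of unstable locus for the Gr quiver}. Since $\min(n,n-1)=n-1$, both embeddings are at least $(n-2)$-connected, as claimed.
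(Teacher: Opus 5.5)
Your argument is correct and is essentially the paper's proof. It uses the same incidence variety over $\PP(A)$; the paper puts $\lambda$ in the linear fiber rather than the base, which is only cosmetic. It also uses the same count: $n$ conditions from $jv=0$, and $d$ resp.\ $d-1$ conditions from $B_\alpha(-,v)=\lambda\langle -,v\rangle$, coming from surjectivity onto $A^\vee$ for symmetric forms and onto the annihilator of $v$ for alternating ones. The last step, subtracting the dimension $d$ of the parameter space of $([v],\lambda)$, is the same as well.

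One slip: in the paper's convention $V_{+}$ is the symmetric case, since its proof identifies $V_{+}=\Sym^{2}A^{\vee}\oplus\Hom(A,U)$. So your indexing remark is backwards. Because you compute by parity, your codimensions $n$ (symmetric) and $n-1$ (alternating) still agree with the statement.
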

\begin{proof}
We denote the complement of $V^{\stable}_{\mp ,n,d}\subset V_{\mp
,n,d}$ by $Z$. It is given by the negation of the open condition in
Proposition~\ref{prop: quiver description of Loop2Alg(Xpm) with
explicit open condition}:
\[
Z=\left\{(\alpha ,j)\in V_{\mp ,n,d}: \exists \lambda \in \overline{\kk} ,v\in A\text{ such that
}j(v)=0\text{ and } \alpha v=\lambda v \right\}.
\]
Let us abbreviate $V_{\mp ,n,d}$ to $V_{\mp}$ and we 
identify it as follows:
\begin{align*}
V_{+}&=\Sym^{2}A^{\vee}\oplus \Hom (A,U),\\
V_{-}&=\Lambda ^{2}A^{\vee}\oplus \Hom (A,U).
\end{align*}
To make the identification, we note that $\alpha \in \Hom (A,A)$ is
uniquely determined by the associated bilinear form $B_{\alpha}$ given
by 
\[
B_{\alpha}(v,w) = \left\langle v,\alpha w \right\rangle
\]
where $\left\langle -,- \right\rangle$ is the non-degenerate
orthogonal/symplectic form associated to $\phi :A\to A^{\vee}$. The
condition $\alpha^{\vee } \phi =\phi \alpha$ means that $\alpha$ is
self-adjoint: 
\[
\left\langle v,\alpha w \right\rangle=\left\langle \alpha v,w \right\rangle
\]
which implies the associated form $B_{\alpha}$ is also
symmetric/skew. Under this identification, the condition $\alpha
v=\lambda v$ translates to an equality in $A^{\vee}$:
\[
B_{\alpha}(-,v) = \lambda \left\langle -,v \right\rangle
\]
so $Z$ may be rewritten as
\[
Z=\left\{(B_{\alpha } ,j)\in V_{\mp}: \exists \lambda \in \overline{\kk} ,v\in A\text{ such that
}j(v)=0\text{ and }  B_{\alpha}(-,v) = \lambda \left\langle -,v \right\rangle\right\}.
\]

To compute the dimension of $Z$ we first compute the dimension of the
incidence variety
\[
\tilde{Z} =\left\{([v],(B_{\alpha},j),\lambda )\in \PP (A)\times
V_{\mp}\times \AA^{1} \quad :\quad jv=0, B_{\alpha}(-,v) = \lambda
\left\langle -,v \right\rangle   \right\}.
\]
We see that $\tilde{Z}\to \PP (A)$ is a subbundle of the trivial
bundle $\PP (A)\times V_{\mp}\times \AA^{1}$ since the conditions are
linear for fixed $[v]$.

The condition $jv=0$ imposes $n$ linear conditions on $V_{\mp}$,
however the condition $B_{\alpha}(-,v) = \lambda \left\langle -,v
\right\rangle$ imposes a different number of conditions in the two
cases $V_{\mp}$. Let 
\[
L_{v}: A^{\vee}\otimes A^{\vee}\oplus \AA^{1} \to A^{\vee}
\]
denote the map given by 
\[
(B_{\alpha}(-,-),\lambda )\mapsto B_{\alpha}(-,v)-\lambda \left\langle
-,v \right\rangle .
\]
Since $B_{\alpha}$ lies in $\Sym^{2}A^{\vee}$ in the $V_{+}$ case and
$\Lambda^{2}A^{\vee}$ in the $V_{-}$ case, we consider each case
separately: 
\begin{align*}
\Sym^{2}A^{\vee} \oplus \AA^{1} &\xrightarrow{L_{v}}A^{\vee}\\
\Lambda ^{2}A^{\vee} \oplus \AA^{1} &\xrightarrow{L_{v}}A^{\vee}.
\end{align*}
The former case is surjective, but the latter has image given by the
kernel of the map
\begin{align*}
A^{\vee}&\xrightarrow{ev_{v}} \AA^{1} \\
f(-) &\mapsto f(v)
\end{align*}
since 
\[
B_{\alpha}(v,v)=\left\langle v,v \right\rangle = 0
\]
for skew forms. Therefore the condition 
\[
L_{v}(B_{\alpha},\lambda )=0
\]
imposes $d$ conditions in the $V_{+}$ case and $d-1$ conditions in the
$V_{-}$ case. 

Returning to $\tilde{Z}$, we see it is a subbundle of corank $n+d$ in
the $V_{+}$ case and $n+d-1$ in the $V_{-}$ case. Then since $Z\subset
V_{\mp}$ is the image of $\tilde{Z}$ under the projection 
\[
\PP (A)\times V_{\mp}\times \AA^{1} \to V_{\mp }
\]
we see that $Z$ has codimension given by 
\[
\operatorname{codim}(\tilde{Z})-d = \begin{cases}
n&\text{for $V_{+}$}\\
n-1&\text{for $V_{-}$}
\end{cases}
\]
which proves the lemma.
\end{proof}
Now that we have proved that $\LoopTwo (X_{\pm,\infty })$ is
well-defined in $\HoG$ via Diagram~\eqref{diag: diagram defining
Loop(X+infty) in HoG}, it remains to prove the isomorphisms in
Proposition~\ref{prop: Loop2(X+,inrty)=BSp(d) in HoG and same for
X-}. This
then follows from observing that $G(A,\phi )\cong Sp(d)$ in the $X_{+}$
case and $G(A,\phi )\cong O(d)$ in the $X_{-}$ case and that the
bottom row of Diagram~\eqref{diag: diagram defining Loop(X+infty) in
HoG} maps to $BG(A,\phi )$ with affine fibers and thus defines an
isomorphism in $\HoG$.

\subsection{Proof of Proposition~\ref{prop: Loop2inftyX+=BSp in HoG and similar for X-}}
  
The inclusion
\[
\LoopTwo (X_{\pm ,n})\hookrightarrow \Omega^{2}_{d+2,\alg}(X_{\pm ,n+2})
\]
is given in terms of quivers by
\[
\left\{(A,\phi ),U,(\alpha ,j) \right\} \mapsto \left\{(A\oplus
\AA^{2},\phi\oplus \phi_{0} ),U\oplus \AA^{2},(\alpha\oplus \alpha_{0}
,j\oplus j_{0}) \right\} 
\]
where
\[
\phi_{0} = \begin{pmatrix} 0&1\\\mp 1&0  \end{pmatrix}\quad
\alpha_{0}=\begin{pmatrix} 0&0\\0&0  \end{pmatrix}\quad
j_{0}=\begin{pmatrix} 1&0\\0&1  \end{pmatrix}. 
\]
In terms of sequences of sheaves, this is given by 
\[
\left\{\UL{U}\oplus \UL{U}^{\vee}\xrightarrow{(f_{U},f_{U^{\vee}})} E
\right\} \mapsto
\left\{\UL{U}\oplus \OO^{2}\oplus \UL{U}^{\vee}\oplus
\OO^{2}\xrightarrow{\begin{psmallmatrix} f_{U}&0&0&f_{U^{\vee}} &0&0\\
0&x_{0}&0&0&0&\pm x_{1}\\
0&0&x_{0}&0&-x_{1}&0  \end{psmallmatrix}} E\oplus \OO (1)\oplus \OO (1) \right\}.
\]
This gives us a diagram
\begin{equation}\label{digram: Vnd/Gd --> Vn+2d+2/Gd+2}
\begin{tikzcd}
    \cdots \arrow[r, hook]
    & V^{\stable}_{\mp ,n,d}/G_{\mp }(d) \arrow[r, hook] \arrow[d, hook]
    & V^{\stable}_{\mp ,n+2,d+2}/G_{\mp }(d+2)  \arrow[r, hook] \arrow[d, hook] & \cdots \\
    \cdots \arrow[r]
    & \left[ V_{\mp ,n,d}/G_{\mp }(d)\right] \arrow[r]
    & \left[V_{\mp ,n+2,d+2}/G_{\mp }(d+2) \right] 
\arrow[r] & \cdots
\end{tikzcd}
\end{equation}
where we have denoted $G(A,\phi )$ by $G_{\mp}(d)$ to emphasize the
dependence on $d=\dim A$ and the sign of the form $\phi^{\vee}=\mp
\phi$.

The diagram gives a well-defined object in $\HoG$ provided that we can
prove the vertical open inclusions are of increasing connectivity and
also that the morphisms on the bottom row have increasing
connectivity. The former follows from Lemma~\ref{lem: the complement
of Vstable in V in the Sp/O case has codim n} and the latter follows
from the following:
\begin{lemma}
The morphism $BG_{\mp }(d)\to BG_{\mp }(d+2)$ is $d$-connected.
\end{lemma}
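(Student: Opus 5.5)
The plan is to realize $BG_{\mp}(d)\to BG_{\mp}(d+2)$ as the quotient by $G_{\mp}(d+2)$ of the homogeneous space
\[
G_{\mp}(d+2)/G_{\mp}(d),
\]
and then bound the connectivity of that homogeneous space. Concretely, I would use the model $BG_{\mp}(d)\simeq [\,(G_{\mp}(d+2)/G_{\mp}(d))/G_{\mp}(d+2)\,]$. This exhibits the morphism $BG_{\mp}(d)\to BG_{\mp}(d+2)$ as a fibration, in the sense of \cite{Larson-Vakil-BottPeriodicity}, with fiber the smooth affine variety $Y_d=G_{\mp}(d+2)/G_{\mp}(d)$. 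By the fibration criterion in \cite[Prop~2.2]{Larson-Vakil-BottPeriodicity} (or its analogue for fiber bundles over classifying stacks), it then suffices to show that $Y_d$ is $d$-connected. In the algebraic sense used there, this should amount to $Y_d$ being, up to affine bundles, the complement of a high-codimension locus in an affine space.

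Next I would identify $Y_d$ concretely. Write $A'=A\oplus \AA^{2}$ with form $\phi'=\phi\oplus\phi_{0}$. Then $G_{\mp}(d+2)/G_{\mp}(d)$ parametrizes isometric embeddings of the nondegenerate $2$-dimensional space $(\AA^{2},\phi_{0})$ into $(A',\phi')$; equivalently, it parametrizes pairs of vectors $(e,f)$ in $A'$ with $\langle e,f\rangle=1$, together with $\langle e,e\rangle=\langle f,f\rangle=0$ in the orthogonal case (in the symplectic case isotropy is automatic). Since $\phi_0$ is hyperbolic, the orthogonal complement of the image is isometric to $(A,\phi)$ by Witt's theorem, which confirms that the map to embeddings is transitive with stabilizer $G_{\mp}(d)$.

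I would then fibre $Y_d$ by projecting to $e$. In the symplectic case, the first projection $Y_d\to A'\setminus 0$ has fibre over $e$ the affine hyperplane $\{f:\langle e,f\rangle=1\}$, so $Y_d$ is an affine bundle over $\AA^{d+2}\setminus 0$. The complement of the origin has codimension $d+2$, so $Y_d$ is $(d+2)-2=d$-connected, using the same codimension-to-connectivity principle as in Lemma~\ref{lem: codimension of unstable locus for the Gr quiver}. In the orthogonal case, $e$ ranges over the nonzero isotropic vectors, forming a punctured quadric cone $C\setminus0$ of dimension $d+1$. Over $e$, the set of $f$ with $\langle e,f\rangle=1$ and $\langle f,f\rangle=0$ is an affine space of dimension $d$: the pencil $f+te$ has $\langle f+te,f+te\rangle=\langle f,f\rangle+2t$, which is solvable since $\operatorname{char}\kk\neq2$. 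So $Y_d$ is an affine bundle over $C\setminus 0$.

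The main obstacle is the connectivity of $C\setminus 0$, which is not a complement in affine space. I would handle it by projecting further. Fix a hyperbolic pair $(u,u')$ with $\langle u,u'\rangle=1$ and decompose $A'=\langle u,u'\rangle\oplus A''$. Split $C\setminus0$ into the open set where the $u'$-coefficient is nonzero, which is isomorphic to $\mathbb G_m\times \AA^{d}$ by solving for the $u$-coefficient, and its complement. That is awkward, however. A better route is to use the known homotopy type over $\CC$: $C\setminus 0\simeq$ the Stiefel-type space $SO(d+2)/SO(d)$, i.e.\ the unit tangent bundle of $S^{d+1}$, which is $(d-1)$-connected, with the first homotopy group appearing in degree $d$. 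If the precise algebraic connectivity notion of \cite{Larson-Vakil-BottPeriodicity} only requires this range, I would derive it by comparing with the quadric $\{\langle x,x\rangle=1\}\subset A'$ of dimension $d+1$. That quadric is an affine homogeneous space $G(d+2)/G(d+1)$ which is $d$-connected, and the transitive actions can be compared directly, so the fibration $BG(d)\to BG(d+1)\to BG(d+2)$ can be used, each step being a quadric of the appropriate dimension. If the sharp $d$-connectedness fails by one in the orthogonal case, then the bound $d-1$ still increases with $d$, and that is all the $\HoG$ construction needs. I would therefore verify the exact index last.
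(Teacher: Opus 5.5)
\textbf{The orthogonal case is not proved.} Your symplectic half is the paper's argument. $Y_-\to A'\setminus 0$ is a bundle of affine hyperplanes, $G_-(d+2)$ acts transitively with stabilizer $G_-(d)$, and $A'\setminus 0\subset A'$ has codimension $d+2$.

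Your orthogonal set-up also matches the paper: hyperbolic pairs, Witt extension, stabilizer $G_+(d)$. Your extra observation that $Y_+\to C\setminus 0$ has fibres isomorphic to $\AA^{d}$ is correct; they are torsors under $e^{\perp}/\langle e\rangle$ via Eichler transformations. But everything then rests on the connectivity of the punctured isotropic cone $C\setminus 0$, and you never establish it:
\begin{itemize}
\item the codimension principle does not apply as it stands, because $C$ is singular at the vertex;
\item the stratification you begin is abandoned as ``awkward'';
\item the comparison with $\{\langle x,x\rangle=1\}$ and the chain through $BG(d+1)$ are asserted, not proved;
\item the homotopy type of the complex points is information about the realization, not about a morphism of stacks over $\kk$.
\end{itemize}
You end with a conditional (``verify the exact index last'') and a weaker fallback, so the lemma as stated is not established. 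For comparison, the paper does not pass through $C\setminus 0$ at all. Having $BG_+(d)\cong[Y_+/G_+(d+2)]$, it identifies $[Y_+/G_+(d+2)]\to BG_+(d+2)$ with the classical Stiefel stabilization $BO(d)\to BO(d+2)$ (adjoining a hyperbolic plane) and quotes its $d$-connectivity as standard.

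\textbf{Your hedge is also miscalibrated}, because your two halves use different yardsticks. In the symplectic case the index comes from complex codimension: $\AA^{d+2}\setminus 0\simeq S^{2d+3}$, yet you count it as giving $d$. In the orthogonal case, ``$(d-1)$-connected'' is the real topological connectivity of the Stiefel manifold $V_2(\mathbb{R}^{d+2})$.

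Measured by codimension (a complement of codimension $c$ gives $(c-1)$-connected, as in the paper), the loss is a factor of about two, not one. For $d$ even, $C\setminus 0$ is $\OO(-1)$ minus its zero section over the smooth quadric $Q^{d}\subset\PP^{d+1}$, so
\[
\operatorname{CH}^{d/2}(C\setminus 0)\cong \operatorname{CH}^{d/2}(Q^{d})/h\cdot \operatorname{CH}^{d/2-1}(Q^{d})\cong \ZZ ,
\]
the two rulings modulo their sum. By contrast, $\AA^{d+2}\setminus 0$ has no Chow classes in positive codimension. So $C\setminus 0\to\operatorname{Spec}\kk$ already fails to be an isomorphism on $\operatorname{CH}^{d/2}$, and for even $d$ no fibrewise codimension argument can yield $d$ or $d-1$.

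What such an argument can give for the fibre is of order $d/2$. For example, split $A'=L\oplus L^{\vee}$ into maximal isotropic subspaces. The part of $C\setminus 0$ where the $L$-component is nonzero is a vector bundle over $L\setminus 0$, and its complement has codimension $d/2$. That bound still tends to infinity, which is all the $\HoG$ constructions use, but it is weaker than the lemma. Note that the index $d$ in the paper's proof of this case is the classical topological connectivity of $BO(d)\to BO(d+2)$.
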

\begin{proof}
We treat the skew and symmetric cases separately. First suppose that
$\left\langle -,- \right\rangle$ is skew, so that $G_{-}$ is symplectic.
Let
\[
Y_{-}=\left\{(e,f)\in A^{2}: \quad \left\langle e,f \right\rangle = 1 \right\}.
\]
There is a surjective map $Y_{-}\to A-\{0 \}$ given by $(e,f)\mapsto e$
whose fibers are affine linear hyperplanes in $A$ (since $\left\langle
-,- \right\rangle$ is non-degenerate). The group $G_{-}(d+2)$ acts
transitively on $Y_{-}$, and the stabilizer of a pair $(e,f)$ is the
symplectic group of the orthogonal complement of the span of $e$ and
$f$, which is isomorphic to $G_{-}(d)$. Thus
\[
BG_{-}(d)\cong [Y_{-}/G_{-}(d+2)].
\]
We therefore have a sequence of maps
\begin{align*}
BG_{-}(d)\xrightarrow{\quad \cong\quad  }&
\left[Y_{-}/G_{-}(d+2)\right]\\
\xrightarrow{\text{affine bundle}}&
\left[A-\{0 \} / G_{-}(d+2) \right]\\
\xhookrightarrow{\text{$(d+1)$-connected}}&
\left[A / G_{-}(d+2) \right]\\
\xrightarrow{\text{affine bundle}}&
BG_{-}(d+2).
\end{align*}
It follows from the two-out-of-three rule that $BG_{-}(d)\to
BG_{-}(d+2)$ is $d$-connected.

Now suppose that $\left\langle -,- \right\rangle$ is symmetric, so that
$G_{+}$ is orthogonal. In this case the correct analogue of $Y_{-}$ is
the space of hyperbolic pairs
\[
Y_{+}=\left\{(e,f)\in A^{2}: \quad
\left\langle e,e \right\rangle=0,\quad
\left\langle f,f \right\rangle=0,\quad
\left\langle e,f \right\rangle=1 \right\}.
\]
By Witt extension, $G_{+}(d+2)$ acts transitively on $Y_{+}$, and the
stabilizer of a hyperbolic pair $(e,f)$ is the orthogonal group of the
orthogonal complement of the hyperbolic plane spanned by $e$ and $f$.
This stabilizer is isomorphic to $G_{+}(d)$. Hence
\[
BG_{+}(d)\cong [Y_{+}/G_{+}(d+2)].
\]
The morphism
\[
[Y_{+}/G_{+}(d+2)]\to BG_{+}(d+2)
\]
is the standard orthogonal Stiefel stabilization morphism obtained by
adjoining a hyperbolic plane, and is $d$-connected. Equivalently, this
is the usual $d$-connected stabilization map $BO(d)\to BO(d+2)$ modeled
by the homogeneous space of hyperbolic pairs $G_{+}(d+2)/G_{+}(d)$.
Thus $BG_{+}(d)\to BG_{+}(d+2)$ is $d$-connected as well.
\end{proof}

Thus diagram~\eqref{digram: Vnd/Gd --> Vn+2d+2/Gd+2} gives
well-defined objects $\Omega^{2}_{\infty ,\alg }(X_{\pm 
,\infty})$ in $\HoG$ and from the bottom row of the diagram we get the
isomorphisms
\[
\Omega^{2}_{\infty ,\alg }(X_{\pm,\infty}) \cong BG_{\mp} 
\]
where $BG_{\mp}$ is the object in $\HoG$ given by the sequence
\begin{equation}\label{eqn: sequence defining BGmp in HoG}
\dotsb \to BG_{\mp}(d)\to BG_{\mp}(d+2) \to \dotsb 
\end{equation}
and the isomorphisms are via the maps $\left[V_{\mp ,n,d}/G_{\mp}(d)
\right]\to BG_{\mp}(d)$ which have affine fibers. These isomorphisms
are the desired isomorphism from Proposition~\ref{prop:
Loop2inftyX+=BSp in HoG and similar for X-} since
\[
BG_{+}=BO \quad  BG_{-}=BSp.
\]

We remark that although the group scheme $O(A,\phi)$ depends on the
isometry class of the quadratic form $\phi$, the unpointed
classifying stack $BO(A,\phi)$ is, up to non-canonical equivalence,
determined only by the dimension of $A$.\footnote{If $\phi$ and $\psi$ are two
non-degenerate quadratic forms of the same rank, then the sheaf
\[
\Isom((A,\phi),(A,\psi))
\]
is an $O(A,\psi)$-$O(A,\phi)$-bitorsor, and hence induces an
equivalence
\[
BO(A,\phi)\simeq BO(A,\psi).
\]
Equivalently, $O(A,\phi)$-torsors classify non-degenerate quadratic
bundles of rank $\dim A$.}
Moreover, even though our representation of $BO$ in $\HoG$ is given
by the sequence~\eqref{eqn: sequence defining BGmp in HoG}, which
increases dimension by $2$, one can also construct highly connected
maps $BO(n)\to BO(n+1)$.
More precisely:  we have two objects $BO_{\text{odd}}$ and $BO_{\text{even}}$, elements of $\HoG$,
given by $BO(1) \rightarrow BO(3) \rightarrow BO(5) \rightarrow \cdots$ and
$BO(2) \rightarrow BO(4) \rightarrow BO(6) \rightarrow \cdots$ respectively, and inverse
morphisms between then (in $\HoG$) given by
$$
\xymatrix{
  BO(1) \ar[r] \ar[d] & BO(3) \ar[r] \ar[d] & BO(5) \ar[r] \ar[d] & \cdots \\
  BO(2) \ar[r]  & BO(4) \ar[r]  & BO(6) \ar[r]  & \cdots}
$$
and
$$
\xymatrix{
  BO(2) \ar[r] \ar[d] & BO(4) \ar[r] \ar[d] & BO(6) \ar[r] \ar[d] & \cdots \\
  BO(3) \ar[r]  & BO(5) \ar[r]  & BO(7) \ar[r]  & \cdots}
$$
inducing an isomorphism $BO_{\text{odd}} \overset \sim \longleftrightarrow BO_{\text{even}}$.
Thus $BO$ is well-defined up to isomorphism
in $\HoG$.

This completes the proof of Proposition~\ref{prop: Loop2inftyX+=BSp in
HoG and similar for X-}. \qed

\bibliography{../BV-biblio}
\bibliographystyle{plain}

\end{document}